\date{\today}
\newtheorem{theorem}{Theorem}[section]
\newtheorem{lemma}[theorem]{Lemma}
\newtheorem{prop}[theorem]{Proposition}
\newtheorem{coro}[theorem]{Corollary}
\theoremstyle{definition}
\newtheorem*{definition 1}{Definition 1}
\newtheorem*{definition 2}{Definition 2}
\newtheorem*{definition 3}{Definition 3}
\newtheorem*{definition 4}{Definition 4}
\newtheorem*{definition 5}{Definition 5}
\newtheorem{remark}[theorem]{Remark}
\theoremstyle{plain}
\allowdisplaybreaks \numberwithin{equation}{section}
\journal{arXiv}
\begin{document}
\title{The spectrum of the multi-frequency quasi-periodic CMV matrices contains intervals}

\author[]{Bei Zhang}
\ead{beizhangmath@aliyun.com}

\author[]{Daxiong Piao\corref{cor1}}
\ead{dxpiao@ouc.edu.cn}
\address{School of Mathematical Sciences,  Ocean University of China, Qingdao 266100, P.R.China}

\cortext[cor1]{Corresponding author}

\begin{abstract}
We investigate the spectral structure of multi-frequency quasi-periodic CMV matrices with Verblunsky coefficients defined by shifts on the $d$-dimensional
torus. Under the positive Lyapunov exponent regime and standard Diophantine frequency conditions, we establish that the spectrum of these operators contains intervals on the unit circle.
\begin{keyword}
Multi-frequency; Quasi-periodic CMV matrix; Lyapunov exponent; Interval spectra
\medskip
\MSC[2010]  37A30 \sep 42C05 \sep 70G60
\end{keyword}

\end{abstract}

\maketitle


\section{Introduction}
Recently, one-frequency CMV matrices with distinct Verblunsky coefficients have attracted significant attention and substantial progress has been achieved in multiple areas, including the positivity of Lyapunov exponents \cite{Damanik-Kruger-2009,Zhang-Nonlinearity}, localization theory \cite{LPG24-arXiv,LPG23-JFA,WD19-JFA,Zhu-JAT}, and connections to quantum walks \cite{Cedzich-2021,WD19-JFA}. As the mathematical theory of CMV matrices continues to develop, it becomes natural to extend these investigations to multi-frequency settings. For small quasi-periodic Verblunsky coefficients with $d$-dimensional frequency, Li, Damanik, and Zhou \cite{Li-Damanik-Zhou} established the pure absolute continuity of the spectral measure on the essential spectrum for the corresponding CMV matrices. Zhang and Piao \cite{Zhang-Piao} formulated the finite and full scale localization for the multi-frequency quasi-periodic CMV matrices, which extended the existing results for Schr\"{o}dinger operators in \cite{GSV16-arXiv,GSV19-Inventiones}.


Despite these advances, the spectral structure--particularly the existence of interval spectra--remains poorly understood for multi--frequency quasi-periodic CMV matrices, owing to fundamental differences in operator dynamics compared to Schr\"{o}dinger-type operators. An important result, going back to Geronimo, Johnson and Simon \cite{GJ-1998,Simon-book2}, is that the finite band spectrum for the orthogonal polynomials on the unit circle (OPUC for short). To our best knowledge, there is no such result for the multi-frequency quasi-periodic (MF-QP for short) CMV matrices. Hence, the main object under consideration is the interval spectrum problem in this paper. For this purpose, our research hinges on several basic tools obtained in \cite{Zhang-Piao}.

Our main result is as follows:
\begin{theorem}\label{main-theorem}
In the positive Lyapunov exponent regime, analytic MF-QP CMV matrices with Verblunsky coefficients generated by a shift on the $d$-dimensional torus have spectra containing intervals on the unit circle, provided the frequencies satisfy the standard Diophantine condition.
\end{theorem}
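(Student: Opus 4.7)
The plan is to adapt the Goldstein--Schlag periodic approximation scheme to the CMV setting and combine it with the Geronimo--Johnson--Simon structure theorem for periodic OPUC cited in the introduction. Fix the Diophantine frequency $\omega \in \T^d$ and choose rational approximants $\omega_n = p_n/q_n \to \omega$. For each $\omega_n$ the Verblunsky coefficients are periodic with some period $Q_n$ controlled by $q_n$, so the corresponding CMV matrix has spectrum $\Sigma_n$ that is a finite union of at most $Q_n$ closed arcs on the unit circle. The goal is to establish (i) a quantitative Hausdorff convergence $\Sigma_n \to \Sigma$, and (ii) a lower bound, uniform in $n$, on the length of the longest arc in $\Sigma_n$. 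Together these force $\Sigma$ to contain a nondegenerate arc.

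For step (i), I would import the large deviation theorem and avalanche principle for multi-frequency CMV transfer matrices from \cite{Zhang-Piao} and use them to compare the $Q_n$-step Szeg\H{o} cocycles driven by $\omega$ and $\omega_n$. The Diophantine estimate $\|q_n\omega - p_n\| \lesssim q_n^{-\tau}$ together with analyticity of the Verblunsky coefficients ensures that, away from a phase set of exponentially small measure, the two cocycles differ by a sub-exponentially small amount in $q_n$. A CMV Weyl-sequence argument --- using restrictions of the Bloch eigenfunctions of the periodic approximant at energies in $\Sigma_n$ as approximate eigenvectors of the quasi-periodic operator --- then yields an explicit Hausdorff convergence rate $d_H(\Sigma_n, \Sigma) \le q_n^{-c}$.

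For step (ii), positivity of the Lyapunov exponent enters decisively through the H\"older continuity of $L$ and of the integrated density of states, both of which follow from the LDT plus avalanche principle in \cite{Zhang-Piao}. These give a uniform lower bound $|\Sigma_n| \ge \kappa > 0$ on the total arc-length, while the number of arcs is bounded by $Q_n$. The naive pigeonhole bound $\kappa/Q_n$ on the longest arc is not enough; to upgrade it to a $Q_n$-independent bound one performs a Cartan--type estimate on the trace of the $Q_n$-step monodromy in $\mathrm{SU}(1,1)$, controlling the number of arcs shorter than $\varepsilon$ by a sub-polynomial function of $Q_n$. Combined with the rate from (i), at least one arc of length bounded below survives in the limit, which is precisely a nondegenerate arc contained in $\Sigma$.

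I expect the principal obstacle to be step (ii), namely proving the sharp upper bound on the number of short bands of the periodic approximants. In the Schr\"odinger setting, the analogue is a theorem about critical points of the discriminant polynomial; in the CMV setting one works with the trace of a $\mathrm{SU}(1,1)$-valued monodromy, which is analytic but not polynomial, and the multi-frequency structure forces one to avoid small divisors at every scale using the Diophantine assumption. Adapting the classical Cartan lemma to this setting and making the bound quantitative enough to beat $Q_n$ is the crux of the argument, and is where the full strength of the large deviation and avalanche technology from \cite{Zhang-Piao} is needed.
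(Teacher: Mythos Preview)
Your strategy is not the paper's, and it runs into a structural obstruction that you flag but do not resolve. The paper never touches periodic approximants; it transplants the Goldstein--Schlag--Voda inductive scheme from \cite{GSV19-Inventiones} directly to the CMV setting. Concretely, the paper sets up conditions (A)--(D) at a sequence of scales $N_s$ which, for each $z$ near $z_0$, furnish a phase $x_s(\phi,z)$ and an exponentially localized finite-volume eigenpair with $z_{k_s}^{[-N_s',N_s'']}(x_s(\phi,z))=z$. Theorem~\ref{bulk-theorem} propagates (A)--(D) from scale $s$ to $s+1$ with Cauchy estimates $|x_s-x_{s-1}|<\exp(-\gamma N_{s-1}/50)$ and analogous eigenvector convergence; passing to the limit gives, for every $z$ in the arc $\bar{\mathcal D}(z_0,r_0)\cap\partial\mathbb D$, a phase $x(z)$ and an $\ell^2$ solution of $\mathcal E(x(z))u=zu$, so $z\in\sigma(\mathcal E(x(z)))=\sigma(\mathcal E)$. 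The arc is produced by an implicit-function construction at each scale (Lemma~\ref{Lemma4.6}), not by limiting bands of approximants.

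The periodic-approximation route is working against the arithmetic: the Diophantine hypothesis is exactly the statement that $\omega$ is \emph{badly} approximable, so the best simultaneous rational $\omega_n$ of denominator $q_n$ satisfies only $|\omega-\omega_n|\sim q_n^{-1-1/d}$, the accumulated drift over one period is merely $q_n^{-1/d}$, and your Hausdorff rate in step~(i) is polynomial in $q_n$. Step~(ii) then has to produce a band of length bounded below \emph{independently of $q_n$}, and the Cartan argument you sketch cannot do this: Cartan-type estimates on the trace control the measure of the set where the trace is small, hence bound the number of \emph{short} bands, but they do not prevent all $q_n$ bands from having comparable length $\sim \kappa/q_n$. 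Extracting one band of fixed size is essentially equivalent to the theorem you are trying to prove. This is precisely why \cite{GSV19-Inventiones} --- the multi-frequency Schr\"odinger paper you invoke --- abandoned periodic approximation for the inductive scheme; the approximation approach in \cite{Kruger12-JFA} works for the skew-shift only because that dynamics has built-in Liouville-like near-periodicity, which a Diophantine toral shift lacks by definition.
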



\begin{remark} Let's make some comments on our results.

\item(a) Simon's inverse spectral theorem \cite[Theorem 11.4.7]{Simon-book2} establishes that for any finite union of intervals on the unit circle $\partial \mathbb{D}$, there exist quasi-periodic Verblunsky coefficients $\{\alpha_{j}\}_{j=0}^{\infty}$  such that the essential support of the spectral measure of the half-line CMV matrix coincides with these intervals. In contrast, our result demonstrates that for extended CMV matrices (i.e., doubly infinite matrices), the spectrum itself contains intervals under the MF-QP setting. This extends Simon's conclusion to the extended operator framework while focusing on the inverse spectral problem.

\item(b) For discrete Schr\"{o}dinger operators, analogous conjectures and results have been explored. Chulaevsky and Sina\v{\i} conjectured in \cite{CS89-CMP} that the spectrum of the two-frequency Schr\"{o}dinger operator generically forms an interval for large smooth potentials. Goldstein, Schlag, and Voda  \cite{GSV19-Inventiones} later proved this conjecture, showing that the spectrum of MF-QP  Schr\"{o}dinger operators with large coupling is indeed a single interval. Kr\"{u}ger \cite{Kruger12-JFA} extended this to skew-shift  Schr\"{o}dinger operators, while in \cite{Kruger13-IMRN}, he further studied OPUC with skew-shift Verblunsky coefficients. However, these results rely crucially on the real analyticity of potentials, a property not shared by CMV matrices due to the complex analytic nature of Verblunsky coefficients.

\item(c) Our work provides a CMV counterpart to the spectral interval results of \cite{GSV19-Inventiones} , but with fundamental differences arising from the operator structure:
\begin{enumerate}
  \item Analyticity Framework: The complex analytic nature of Verblunsky coefficients in CMV matrices (vs. real analytic Schr\"{o}dinger potentials) imposes phase-space constraints. This allows fragmented spectral configurations prohibited in Schr\"{o}dinger systems.

  \item Operator Class Disparity: As unitary operators on the unit circle (contrasting self-adjoint Schr\"{o}dinger operators on the real line), CMV matrices permit spectral gap formation under quasi-periodic perturbations, a phenomenon suppressed in connected spectra of self-adjoint frameworks.

  \item Spectral Interval Phenomenology: While both systems exhibit interval spectra in positive Lyapunov exponent regimes, the CMV spectrum demonstrates essential multi-interval characteristics. This divergence originates from the analytic continuation barriers inherent in complex coefficient systems.

\end{enumerate}
\end{remark}

The present paper is organized as follows. After recalling some preliminaries of the OPUC theory in Section 2, we introduce some basic tools in Section 3 which are helpful to the following research. In Section 4, we present four inductive conditions to investigate the structure of the spectrum for the MF-QP CMV matrices. We prove the main theorem in Section 5. For easier reading, we introduce some useful notations and lemmas in the Appendix.

\section{Preliminaries}
We begin by recalling key concepts of OPUC. Let $\mathbb{D}=\{z:|z|<1\}$ be the open unit disk in $\mathbb{C}$ and $\mu$ be a nontrivial probability measure on $\partial \mathbb{D}=\{z:|z|=1\}$.  Therefore, the functions $1, z, z^{2}, \ldots$ are linearly independent in the Hilbert space $L^{2}(\partial \mathbb{D}, d\mu)$. Let $\Phi_{n}(z)$ be the monic orthogonal polynomials; that is,
$\Phi_{n}(z)=P_{n}[z^{n}]$,  $P_{n}\equiv$ projection onto $\{1, z, z^{2}, \ldots, z^{n-1}\}^{\bot}$ in $L^{2}(\partial \mathbb{D}, d\mu)$. Then the orthonormal polynomials are
$$\varphi_{n}(z)=\frac{\Phi_{n}(z)}{\|\Phi_{n}(z)\|_{\mu}},$$
where $\|\cdot\|_{\mu}$ denotes the norm of $L^{2}(\partial \mathbb{D}, d\mu)$. For comprehensive foundations, we refer to Simon's treatise \cite{Simon-book}

Then the Verblunsky coefficients $\{\alpha_{n}\}_{n=0}^{\infty}$ with $\alpha_{n} \in \mathbb{D}$ for all $n\geq 0$, are uniquely determined by the Szeg\H{o} recursion:

\begin{equation}\label{Szego-1}
\Phi_{n+1}(z)=z\Phi_{n}(z)-\overline{\alpha}_{n}\Phi_{n}^{*}(z),
\end{equation}
where the Szeg\H{o} dual  $\Phi_{n}^{*}(z)$ is defined by $z^{n}\overline{\Phi_{n}(1/\overline{z})}.$

Orthonormalizing $1, z, z^{-1}, z^{2}, z^{-2},\cdots$, we can get a CMV basis $\{\chi_{j}\}_{j=0}^{\infty}$ of $L^{2}(\partial \mathbb{D},d\mu)$, and the matrix representation of multiplication by $z$ relative to the CMV basis gives rise to the CMV matrix $\mathcal{C}$, where
$$\mathcal{C}_{ij}=\langle \chi_{i},z\chi_{j} \rangle.$$
Then $\mathcal{C}$ has the form
\begin{equation*}
\mathcal{C}=
\left(
\begin{array}{ccccccc}
\overline{\alpha}_{0}&\overline{\alpha}_{1}\rho_{0}&\rho_{1}\rho_{0}&  &  & & \\
\rho_{0}&-\overline{\alpha}_{1}\alpha_{0}&-\rho_{1}\alpha_{0}& & & &\\
& \overline{\alpha}_{2}\rho_{1} &-\overline{\alpha}_{2}\alpha_{1}& \overline{\alpha}_{3}\rho_{2}&\rho_{3}\rho_{2}& & \\
& \rho_{2}\rho_{1}& -\rho_{2}\alpha_{1}&-\overline{\alpha}_{3}\alpha_{2}&-\rho_{3}\alpha_{2}& & \\
& & &\overline{\alpha}_{4}\rho_{3}&-\overline{\alpha}_{4}\alpha_{3}&\overline{\alpha}_{5}\rho_{4}& \\
& & &\rho_{4}\rho_{3}&-\rho_{4}\alpha_{3}&-\overline{\alpha}_{5}\alpha_{4}& \\
& & & & \ddots & \ddots & \ddots
\end{array}
\right),
\end{equation*}
where $\rho_{j}=\sqrt{1-|\alpha_{j}|^{2}}$, $j\geq 0$. Similarly, we can get the extended CMV matrix
\begin{equation*}
\mathcal{E}=
\left(
\begin{array}{cccccccc}
\ddots&\ddots&\ddots& & & & & \\
& -\overline{\alpha}_{0}\alpha_{-1}&\overline{\alpha}_{1}\rho_{0}&\rho_{1}\rho_{0}&  &  & & \\
& -\rho_{0}\alpha_{-1}&-\overline{\alpha}_{1}\alpha_{0}&-\rho_{1}\alpha_{0}& & & &\\
& & \overline{\alpha}_{2}\rho_{1} &-\overline{\alpha}_{2}\alpha_{1}& \overline{\alpha}_{3}\rho_{2}&\rho_{3}\rho_{2}& & \\
& & \rho_{2}\rho_{1}& -\rho_{2}\alpha_{1}&-\overline{\alpha}_{3}\alpha_{2}&-\rho_{3}\alpha_{2}& & \\
& & & &\overline{\alpha}_{4}\rho_{3}&-\overline{\alpha}_{4}\alpha_{3}&\overline{\alpha}_{5}\rho_{4}& \\
& & & &\rho_{4}\rho_{3}&-\rho_{4}\alpha_{3}&-\overline{\alpha}_{5}\alpha_{4}& \\
& & & & & \ddots & \ddots & \ddots
\end{array}
\right),
\end{equation*}
which is a special five-diagonal doubly-infinite matrix in the standard
basis of $L^{2}(\partial \mathbb{D}, d\mu)$ according to \cite[Subsection 4.5]{Simon-book} and \cite[Subsection 10.5]{Simon-book2}.

In this paper, we consider a sequence of Verblunsky coefficients generated by an analytic function $\alpha(\cdot):\mathbb{T}^{d}\rightarrow \mathbb{D}$, $\alpha_{n}(x)=\alpha(T^{n}x)=\alpha(x+n\omega)$, where $T$ is an invertible map of the form $Tx=x+\omega$, $\mathbb{T}:=\mathbb{R}/\mathbb{Z}$, $x, \omega\in \mathbb{T}^{d}$ are  phase and frequency, respectively. Moreover, the frequency vector $\omega\in \mathbb{T}^{d}$ satisfies the standard Diophantine condition
\begin{equation}\label{standard-DC}
\|k\cdot \omega\|\geq \frac{p}{|k|^{q}}
\end{equation}
for all nonzero $k\in \mathbb{Z}^{d}$, where $p>0$, $q>d$ are some constants, $\|\cdot\|$ denotes the distance to the nearest integer and $|\cdot|$ stands for the sup-norm on $\mathbb{Z}^{d}$ ($|k|=|k_{1}|+|k_{2}|+\cdots+|k_{d}|$, $k_{i}$ is the $i$-th element of the vector $k$). For the sake of convenience, we denote $\mathbb{T}^{d}(p,q)\subset \mathbb{T}^{d}$ be the set of $\omega$ satisfying \eqref{standard-DC}.

Assume that the sampling function $\alpha(x)$ satisfies that
\begin{equation}\label{sampling-function}
\int_{\mathbb{T}^{d}}\log(1-|\alpha(x)|)dx>-\infty
\end{equation}
and it can extend complex analytically to
\begin{equation*}
\mathbb{T}^{d}_{h}:=\{x+iy:x\in \mathbb{T}^{d}, y\in \mathbb{R}^{d}, |y|<h\}
\end{equation*}
with some $h>0$.

Then the Szeg\H{o} recursion is equivalent to
\begin{equation}\label{Szego-2}
\rho_{n}(x)\varphi_{n+1}(z)=z\varphi_{n}(z)-\overline{\alpha}_{n}(x)\varphi_{n}^{*}(z),
\end{equation}
where $\rho_{n}(x)=\rho(x+n\omega)$ and $\rho(x)=(1-|\alpha(x)|^{2})^{1/2}$. Applying Szeg\H{o} dual to both sides of equation (\ref{Szego-2}), one can obtain that
\begin{equation}\label{Szego-3}
\rho_{n}(x)\varphi_{n+1}^{*}(z)=\varphi_{n}^{*}(z)-\alpha_{n}(x)z\varphi_{n}(z).
\end{equation}

Then equations (\ref{Szego-2}) and (\ref{Szego-3}) can be written as
\begin{equation*}
\left(
\begin{array}{c}
\varphi_{n+1}\\
\varphi_{n+1}^{*}
\end{array}
\right)
=
S(\omega,z;x+n\omega)
\left(
\begin{array}{c}
\varphi_{n}\\
\varphi_{n}^{*}
\end{array}
\right),
\end{equation*}
where
\begin{equation*}
S(\omega,z;x)=\frac{1}{\rho(x)}
\left(
\begin{array}{cc}
z&-\overline{\alpha}(x)\\
-\alpha(x)z&1
\end{array}
\right).
\end{equation*}
Since $\det S(\omega,z;x)=z$, one always prefers to study the following determinant $1$ matrix:
\begin{equation*}
M(\omega,z;x)=\frac{1}{\rho(x)}
\left(
\begin{array}{cc}
\sqrt{z}&-\frac{\overline{\alpha}(x)}{\sqrt{z}}\\
-\alpha(x)\sqrt{z}&\frac{1}{\sqrt{z}}
\end{array}
\right)\in \mathbb{S}\mathbb{U}(1,1),
\end{equation*}
which is called the Szeg\H{o} cocycle map. Then the monodromy matrix (or $n$-step transfer matrix) is defined by
\begin{equation}\label{n-step}
M_{n}(\omega,z;x)=\prod_{j=n-1}^{0}M(\omega,z;x+j\omega)
\end{equation}
According to (\ref{n-step}), it is obvious that
\begin{equation*}
M_{n_{1}+n_{2}}(\omega,z;x)=M_{n_{2}}(\omega,z;x+n_{1}\omega)M_{n_{1}}(\omega,z;x)
\end{equation*}
and
\begin{equation}\label{log-subadditive}
\log\|M_{n_{1}+n_{2}}(\omega,z;x)\|\leq \log\|M_{n_{1}}(\omega,z;x)\|+\log\|M_{n_{2}}(\omega,z;x+n_{1}\omega)\|.
\end{equation}
Let
\begin{equation}\label{un}
u_{n}(\omega,z;x):=\frac{1}{n}\log\|M_{n}(\omega,z;x)\|
\end{equation}
and
\begin{equation}\label{un}
L_{n}(\omega,z):=\int_{\mathbb{T}^{d}}u_{n}(\omega,z;x)dx.
\end{equation}

Integrating the inequality (\ref{log-subadditive}) with respect to $x$ over $\mathbb{T}^{d}$, we have that
\begin{equation*}
L_{n_{1}+n_{2}}(\omega,z)\leq \frac{n_{1}}{n_{1}+n_{2}}L_{n_{1}}(\omega,z)+\frac{n_{2}}{n_{1}+n_{2}}L_{n_{2}}(\omega,z).
\end{equation*}
This implies that
\begin{equation*}
L_{n}(\omega,z)\leq L_{m}(\omega,z) \quad \mathrm{if} \quad m<n,\; m|n
\end{equation*}
and
\begin{equation*}
L_{n}(\omega,z)\leq L_{m}(\omega,z)+C\frac{m}{n} \quad \mathrm{if} \quad m<n.
\end{equation*}

For any irrational $\omega\in \mathbb{T}^{d}$, the transformation $x\rightarrow x+\omega$ is ergodic. Notice that inequality (\ref{log-subadditive}) implies that $\log \|M_{n}(\omega,z;x)\|$ is subadditive. Thus, according to Kingman's subadditive ergodic theorem, the limit
\begin{equation}\label{Lyapunov-exponent}
L(\omega,z)=\lim_{n\rightarrow\infty}L_{n}(\omega,z)
\end{equation}
exists. This is called the Lyapunov exponent. Throughout this paper, we let $\gamma$ be the lower bound of the Lyapunov exponent. On the other hand, Furstenberg-Kesten theorem indicates that the limit also exists for $ a.e.\;x$:
\begin{equation}\label{Lyapunov-exponent-1}
\lim_{n\rightarrow\infty}u_{n}(\omega,z;x)=\lim_{n\rightarrow\infty}L_{n}(\omega,z)=L(\omega,z).
\end{equation}
Note that
\begin{equation}\label{log-Mn}
0\leq\log \|M_{n}(\omega,z;x)\|\leq C(\alpha_{i},z)n.
\end{equation}
Thus, we have that
\begin{equation}\label{Ln-bound}
0\leq L_{n}(\omega,z)\leq C(\alpha_{i},z)
\end{equation}
for $i\in \mathbb{Z}$.

Define the unitary matrices
\begin{equation*}
\Theta_{n}=
\left(
\begin{array}{cc}
\overline{\alpha}_{n}&\rho_{n}\\
\rho_{n}&-\alpha_{n}
\end{array}
\right).
\end{equation*}
Then one can factorize the matrix $\mathcal{C}$ as
\begin{equation*}
\mathcal{C}=\mathcal{L}_{+}\mathcal{M}_{+},
\end{equation*}
where
\begin{equation*}
\mathcal{L}_{+}=\left(
\begin{matrix}
\Theta_0 &~ & ~\\
~& \Theta_2 & ~\\
~ & ~& \ddots
\end{matrix}
\right),\quad
\mathcal{M}_{+}=\left(
\begin{matrix}
\mathbf{1} &~ & ~\\
~& \Theta_1 & ~\\
~ & ~& \ddots
\end{matrix}
\right).
\end{equation*}

Similarly, the extended CMV matrix can be written as
\begin{equation*}
\mathcal{E}=\mathcal{L}\mathcal{M},
\end{equation*}
with $\mathcal{L}$ and $\mathcal{M}$ defined by
\begin{equation*}
\mathcal{L}=\bigoplus_{j\in \mathbb{Z}}\Theta_{2j} \quad {\rm and} \quad \mathcal{M}=\bigoplus_{j\in \mathbb{Z}}\Theta_{2j+1}.
\end{equation*}

We let $\mathcal{E}_{[a,b]}$ denote the restriction of an extended CMV matrix to a finite interval $[a,b]\subset\mathbb{Z}$ defined by
\begin{equation*}
\mathcal{E}_{[a,b]}=P_{[a,b]}\mathcal{E}(P_{[a,b]})^{*},
\end{equation*}
where $P_{[a,b]}$ is the projection $\ell^{2}(\mathbb{Z})\rightarrow \ell^{2}([a,b])$. $\mathcal{L}_{[a,b]}$ and $\mathcal{M}_{[a,b]}$ are defined similarly.

However, the matrix $\mathcal{E}_{[a,b]}$ will no longer be unitary due to the fact that $|\alpha_{a-1}|<1$ and $|\alpha_{b}|<1$. To solve this issue, we need to modify the boundary conditions. With $\beta, \eta \in \partial \mathbb{D}$, we define the sequence of Verblunsky coefficients
\begin{equation*}
\tilde{\alpha}_{n}=
\begin{cases}
\beta, \quad & n =a-1;\\
\eta,& n =b;\\
\alpha_{n},& n \notin  \{a-1,b\}.
\end{cases}
\end{equation*}
Denote the extended CMV matrix with Verblunsky coefficients $\tilde{\alpha}_{n}$ by $\tilde{\mathcal{E}}$. Define
\begin{equation*}
\mathcal{E}_{[a,b]}^{\beta,\eta}=P_{[a,b]}\tilde{\mathcal{E}}(P_{[a,b]})^{*}.
\end{equation*}
$\mathcal{L}_{[a,b]}^{\beta,\eta}$ and $\mathcal{M}_{[a,b]}^{\beta,\eta}$ are defined correspondingly. Then $\mathcal{E}_{[a,b]}^{\beta,\eta}$, $\mathcal{L}_{[a,b]}^{\beta,\eta}$ and $\mathcal{M}_{[a,b]}^{\beta,\eta}$ are all unitary.

For $z\in \mathbb{C}$, $\beta, \eta\in\partial \mathbb{D}$, we can define the characteristic determinant of matrix $\mathcal{E}^{\beta,\eta}_{[a,b]}$ by
\begin{equation*}
\varphi^{\beta,\eta}_{[a,b]}(z):=\det(z-\mathcal{E}^{\beta,\eta}_{[a,b]}).
\end{equation*}

According to the results in \cite[Theorem 2]{Wang-JMAA}, the relation between this characteristic determinant and the $n$-step transfer matrix is
\begin{equation}\label{relation}
M_n(\omega,z;x)=(\sqrt{z})^{-n}\Big(\prod_{j=0}^{n-1}\frac{1}{\rho_j}\Big)
\left(
\begin{matrix}
z\varphi^{\beta,\eta}_{[1,n-1]} & \frac{z\varphi^{\beta,\eta}_{[1,n-1]}-\varphi^{\beta,\eta}_{[0,n-1]}}{\alpha_{-1}}\\
z\big(\frac{z\varphi^{\beta,\eta}_{[1,n-1]}-\varphi^{\beta,\eta}_{[0,n-1]}}{\alpha_{-1}}\big)^* &(\varphi^{\beta,\eta}_{[1,n-1]})^{*}
\end{matrix}
\right).
\end{equation}

\section{Basic tools}
\subsection{Large deviation estimates (LDT)}
\begin{theorem}\label{LDT-matrix}\cite[Theorem 3.2]{Zhang-Piao}
Let $\omega\in \mathbb{T}^{d}(p,q)$, $z\in \partial \mathbb{D}$, and suppose the Lyapunov exponent satisfies $L(\omega,z)>\gamma>0$. The there exist constants $\sigma=\sigma(p,q)$, $\tau=\tau(p,q)$ in $(0,1)$, and $C_{0}=C_{0}(p,q,h)$ such that for all $n\geq 1$, the following holds:
\begin{equation}\label{LDT1}
\mathrm{mes}\{x\in \mathbb{T}^{d}: |\log\|M_{n}(\omega,z;x)\|-nL_{n}(\omega,z)|> n^{1-\tau}\}<\exp(-C_{0}n^{\sigma}).
\end{equation}
\end{theorem}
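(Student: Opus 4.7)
The plan is to follow the Bourgain--Goldstein--Schlag (BGS) strategy for quasi-periodic cocycles, adapted from the $\mathrm{SL}(2,\mathbb{R})$ Schr\"odinger setting to the $\mathrm{SU}(1,1)$ CMV setting. Three ingredients drive the argument: the pluri-subharmonic extension of $u_n(\omega,z;\cdot)$ from $\mathbb{T}^d$ to the complex strip $\mathbb{T}^d_h$, available because $\alpha$ and $\rho$ are analytic there with $\int\log\rho>-\infty$ by (\ref{sampling-function}); the standard Diophantine condition (\ref{standard-DC}) on $\omega$, which controls small divisors in Fourier series along the orbit; and the Goldstein--Schlag avalanche principle, which bootstraps an LDT from a small scale to a larger one.

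The first step is a weak LDT based on subharmonicity alone. The function $n u_n(\omega,z;\cdot)$ extends to a pluri-subharmonic function on $\mathbb{T}^d_h$, uniformly bounded above by (\ref{log-Mn}). Its Riesz representation yields the BMO estimate $\|u_n-L_n\|_{BMO(\mathbb{T}^d)}=O(1/n)$, and the John--Nirenberg inequality produces the crude bound
\begin{equation*}
\mathrm{mes}\{x\in \mathbb{T}^d:|u_n(\omega,z;x)-L_n(\omega,z)|>\delta\}\leq \exp(-c n\delta),
\end{equation*}
valid above a threshold in $\delta$ governed by the Riesz mass; this is much weaker than (\ref{LDT1}) but is enough to seed the induction. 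The second step is the avalanche upgrade. Writing $n=Nn_0$ with $n_0\ll n$ and factoring $M_n(\omega,z;x)=A_N\cdots A_1$ with $A_j=M_{n_0}(\omega,z;x+(j-1)n_0\omega)$, the avalanche principle gives
\begin{equation*}
u_n(x)\;\approx\;\tfrac{1}{N}\sum_{j=1}^{N-1}u_{2n_0}(x+(j-1)n_0\omega)-\tfrac{1}{N}\sum_{j=2}^{N-1}u_{n_0}(x+(j-1)n_0\omega),
\end{equation*}
provided the hypotheses $\log\|A_j\|\geq n_0\gamma/2$ and a junction non-degeneracy condition hold off a small set---precisely the content of the weak LDT at scale $n_0$. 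Each Birkhoff-type sum concentrates around its space mean $L_{n_0}$ or $L_{2n_0}$ by combining the algebraic Fourier decay $|\widehat{u_{n_0}}(k)|\lesssim 1/|k|$ coming from the Riesz representation with the Diophantine lower bound $\|k\cdot\omega\|\geq p/|k|^q$; since $L_{n_0}$ and $L_{2n_0}$ enter with opposite signs and both converge to $L(\omega,z)$, the residual bias collapses to $|L_{2n_0}-L_{n_0}|$, which lifts the deviation exponent from $1/n$ to $n^{-\tau}$.

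The main obstacle is the delicate balance of scales. The avalanche step requires the exceptional set at scale $n_0$ to have measure much smaller than $1/N$, which couples the permitted deviation $n^{1-\tau}$ to the decay rate $\exp(-C_0 n^\sigma)$ through the Diophantine exponent $q$; this is where the specific pair $\sigma,\tau\in(0,1)$ in the statement is born, and iterating the bootstrap across a geometric tower of scales is what fixes their admissible range. A secondary CMV-specific difficulty is that the Szeg\H{o} cocycle $M(\omega,z;x)$ contains a $\sqrt{z}$ factor and has formal poles where $\rho(x)=0$: the square root is absorbed by passing to a double cover in $z$, and the poles are neutralized by the integrability hypothesis (\ref{sampling-function}), which keeps $nu_n$ locally integrable on $\mathbb{T}^d_h$ with controlled Riesz mass so that the BMO machinery developed for Schr\"odinger cocycles transfers to the present setting essentially without change.
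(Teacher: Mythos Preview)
The paper does not contain a proof of this theorem: it is quoted verbatim as \cite[Theorem 3.2]{Zhang-Piao} and used as a black box throughout (under the label ``(LDT)''), so there is no in-paper argument to compare your proposal against. Your outline is the standard Bourgain--Goldstein--Schlag route (pluri-subharmonic extension, a weak initial deviation estimate, and an avalanche-principle bootstrap driven by the Diophantine condition), which is almost certainly the method used in the cited reference as well.

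A couple of cautionary remarks on the details of your sketch, in case you intend to flesh it out. First, the quantitative BMO claim $\|u_n-L_n\|_{BMO(\mathbb{T}^d)}=O(1/n)$ is not the correct normalization in the multi-frequency setting: the Riesz mass of $nu_n$ on $\mathbb{T}^d_h$ is $O(n)$, not $O(1)$, and the resulting initial deviation bound is of the form $\mathrm{mes}\{|u_n-L_n|>\delta\}\leq C\exp(-c\delta n^{\kappa})$ for some small $\kappa=\kappa(d,q)$ rather than the $\exp(-cn\delta)$ you wrote; the polynomial loss in $n$ comes from the higher-dimensional Cartan/\L ojasiewicz-type estimates and is exactly what forces $\sigma,\tau$ to be strictly less than $1$. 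Second, in the avalanche step the residual bias is not $|L_{2n_0}-L_{n_0}|$ alone but $2L_{2n_0}-L_{n_0}-L_n$, and closing the induction requires feeding the improved LDT back into a rate-of-convergence estimate for $L_n\to L$ (this is precisely the content of Lemma~\ref{Lemma3.3} in the present paper); without that feedback the bootstrap stalls. These are standard refinements in the multi-frequency literature, but they are where the actual work lies.
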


\begin{theorem}\label{LDT-determinant}\cite[Remark 3.20]{Zhang-Piao}
Suppose $\omega\in \mathbb{T}^{d}(p,q)$, $z\in \partial \mathbb{D}$, and $L(\omega,z)>\gamma>0$. There exist $\tau=\tau(p,q)$, $\nu=\nu(p,q)$, $ \tau, \nu \in (0,1)$, $C=C(p,q,h)$ such that for $n\geq 1$ one has that
\begin{equation}\label{LDT2}
\mathrm{mes}\{x\in \mathbb{T}^{d}:|\log|\varphi^{\beta,\eta}_{[0,n-1]}(\omega,z;x)|-nL_{n}(\omega,z)|>n^{1-\tau}\}<\exp(-Cn^{\nu}).
\end{equation}
\end{theorem}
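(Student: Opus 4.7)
The plan is to transfer the known LDT for the transfer matrix norm (Theorem \ref{LDT-matrix}) to an LDT for the characteristic determinants, using the explicit formula \eqref{relation} as the bridge. For $z\in\partial\mathbb{D}$, one has $|(\sqrt{z})^{-n}|=1$, so reading off the $(1,1)$ entry of $M_n(\omega,z;x)$ gives $|(M_n)_{11}(\omega,z;x)|=\prod_{j=0}^{n-1}\rho_j(x)^{-1}\,|\varphi^{\beta,\eta}_{[1,n-1]}(\omega,z;x)|$, and every other entry of $M_n$ is a scalar combination of $\varphi^{\beta,\eta}_{[0,n-1]}$, $\varphi^{\beta,\eta}_{[1,n-1]}$, and their Szeg\H{o} duals, scaled by the same $\prod_j \rho_j^{-1}$. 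Extracting the $(1,2)$ entry and applying the triangle inequality isolates $\varphi^{\beta,\eta}_{[0,n-1]}$:
\begin{equation*}
|\varphi^{\beta,\eta}_{[0,n-1]}(\omega,z;x)|\le C(|\alpha_{-1}|^{-1}+1)\prod_{j=0}^{n-1}\rho_j(x)\cdot \|M_n(\omega,z;x)\|,
\end{equation*}
and the reverse inequality $\|M_n(\omega,z;x)\|\le C\prod_{j=0}^{n-1}\rho_j(x)^{-1}\max(|\varphi^{\beta,\eta}_{[0,n-1]}|,|\varphi^{\beta,\eta}_{[1,n-1]}|)$ is obtained by bounding the operator norm by the largest entry.

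For the upper half of \eqref{LDT2}, take logarithms in the displayed inequality above. Theorem \ref{LDT-matrix} controls $\log\|M_n(\omega,z;x)\|$ within $n^{1-\tau}$ of $nL_n(\omega,z)$ off a set of measure at most $\exp(-C_0 n^{\sigma})$. The remaining term $\sum_{j=0}^{n-1}\log\rho(x+j\omega)$ is an ergodic average of the observable $\log\rho(x)=\tfrac12\log(1-|\alpha(x)|^2)$, which by \eqref{sampling-function} and the analyticity of $\alpha$ extends to a bounded plurisubharmonic function on a slightly thinner strip $\mathbb{T}^d_{h'}$; a standard sub-exponential LDT for smooth functions under a Diophantine shift (\eqref{standard-DC}) pins this sum to $n\int_{\mathbb{T}^d}\log\rho\,dx$ within $n^{1-\tau}$ off an exponentially small set. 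The normalization constant $\int\log\rho$ is then absorbed back into $L_n$ upon recalling that $L_n$ is, by \eqref{un}, the growth rate of the cocycle $M$ whose defining matrix already carries the $1/\rho$ factor, which is exactly what produces the clean statement $\log|\varphi^{\beta,\eta}_{[0,n-1]}|\approx nL_n$.

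The lower half is the principal obstacle. The natural strategy is to exploit that $f_n(x):=\tfrac{1}{n}\log|\varphi^{\beta,\eta}_{[0,n-1]}(\omega,z;x)|$ extends to a plurisubharmonic function on $\mathbb{T}^d_h$, bounded above by a constant independent of $n$ via \eqref{log-Mn} and the formula above. Two ingredients are needed: first, matching the means, $\bigl|\int f_n\,dx-L_n\bigr|\lesssim n^{-\tau}$, which I would obtain by integrating the norm/determinant identities against $dx$, applying \eqref{un}--\eqref{Ln-bound}, and separating out the LDT-controlled $\sum\log\rho_j$ contribution; second, a Cartan / BMO-type estimate for subharmonic functions on the multi-dimensional torus (in the spirit of Bourgain-Goldstein and its multi-frequency refinements used in \cite{GSV19-Inventiones}) that converts bounded $L^\infty$ norm plus $L^1$-closeness of the mean into exponential concentration $\exp(-Cn^{\nu})$ on sub-level sets. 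The hard part is precisely this sub-level control: near zeros of $\varphi^{\beta,\eta}_{[0,n-1]}(\omega,z;\cdot)$ the function $f_n$ plunges to $-\infty$, and only the Diophantine condition \eqref{standard-DC} together with the analytic width $h$ permit a quantitative Cartan-type bound on the measure of these zero neighborhoods, thereby fixing the final exponents $\tau$ and $\nu$.
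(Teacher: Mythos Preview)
The paper does not prove this theorem: it is quoted verbatim from \cite[Remark 3.20]{Zhang-Piao} as one of the ``basic tools'' imported from the companion paper, so there is no in-paper argument to compare against. Your overall strategy---pass from the matrix LDT (Theorem~\ref{LDT-matrix}) to the determinant via the explicit relation~\eqref{relation}, control the $\sum_j\log\rho_j$ ergodic sum separately, and close the lower bound with a plurisubharmonic/Cartan argument---is exactly the standard route used in the Schr\"odinger literature and in the CMV adaptations, and it is almost certainly how the cited proof proceeds.

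That said, one step in your sketch is not right as written. From \eqref{relation} one gets $\varphi^{\beta,\eta}_{[0,n-1]}=(\sqrt{z})^{n}\prod_j\rho_j\,\bigl[(M_n)_{11}-\alpha_{-1}(M_n)_{12}\bigr]$, so the harmless constant in your upper bound is $1+|\alpha_{-1}|\le 2$, not $|\alpha_{-1}|^{-1}+1$ (which would blow up where $\alpha_{-1}(x)=0$). More importantly, this same computation shows that for typical $x$ one has $\log|\varphi^{\beta,\eta}_{[0,n-1]}|\approx \log\|M_n\|+\sum_{j}\log\rho_j$, hence after averaging $\approx n\bigl(L_n+\int_{\mathbb{T}^d}\log\rho\,dx\bigr)$, \emph{not} $nL_n$. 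Your sentence that ``$\int\log\rho$ is absorbed back into $L_n$ because $M$ already carries the $1/\rho$ factor'' has the sign of the bookkeeping reversed: precisely because $M$ carries $1/\rho$, the $\rho$-sum reappears with a plus sign when you pass from $\|M_n\|$ to $|\varphi|$. So either the normalization intended in the cited statement differs from the $L_n$ defined in~\eqref{un} of the present paper (this is quite possible---compare the Thouless-type identity $L=-\int\log\rho+\int\log|z-z'|\,d\nu$ in OPUC), or an additional argument is needed to reconcile the two. You should track this carefully rather than declare it absorbed. The high-level description of the lower bound via Cartan's estimate (Lemma~\ref{Lemma3.23}) is correct in spirit; what remains is to produce the single good point $\hat{\phi}_0$ (from the matrix LDT) and to verify that the resulting exponents $\tau,\nu$ can be chosen in $(0,1)$.
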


\subsection{Estimates for the Lyapunov exponent}
\begin{lemma}\label{Lemma3.3}\cite[Lemma 3.3]{Zhang-Piao}
Assume $\omega\in \mathbb{T}^{d}(p,q)$, $z\in \partial \mathbb{D}$, and $L(\omega,z)>\gamma>0$. Then for any $n\geq 2$,
\begin{equation*}
0\leq L_{n}(\omega,z)-L(\omega,z)<C\frac{(\log n)^{1/\sigma}}{n},
\end{equation*}
where $C=C(p,q,z,\gamma)$ and $\sigma$ is as in (LDT).
\end{lemma}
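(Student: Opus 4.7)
The lower bound $L_n-L\geq 0$ is immediate from Fekete's subadditive lemma: integrating the submultiplicativity inequality~(\ref{log-subadditive}) shows that $nL_n(\omega,z)$ is subadditive in $n$, hence $L(\omega,z)=\lim_n L_n(\omega,z)=\inf_n L_n(\omega,z)$.

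For the upper bound I plan to combine the Avalanche Principle (Goldstein--Schlag form) with the matrix LDT of Theorem~\ref{LDT-matrix}. Given $m\geq 2$, decompose
\[
M_{mn}(\omega,z;x)\;=\;\prod_{j=m-1}^{0}M_n(\omega,z;x+jn\omega),
\]
apply Theorem~\ref{LDT-matrix} at scales $n$ and $2n$, and union-bound over $j\in\{0,\dots,m-1\}$: there is a set $\mathcal{G}_m\subset\mathbb{T}^d$ with $|\mathbb{T}^d\setminus\mathcal{G}_m|\leq 2m\exp(-C_0 n^\sigma)$ on which every factor satisfies $\log\|M_n(x+jn\omega)\|\geq nL_n-n^{1-\tau}$ and every consecutive pair satisfies $\log\|M_{2n}(x+jn\omega)\|\geq 2nL_{2n}-(2n)^{1-\tau}$. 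Since $L_n,L_{2n}\geq\gamma>0$, for $n$ large the singular-value parameters $\mu:=e^{nL_n-n^{1-\tau}}$ and $\nu:=e^{2nL_{2n}-2nL_n-Cn^{1-\tau}}$ satisfy $\mu\nu^2\geq e^{cn}$, so the Avalanche Principle applies on $\mathcal{G}_m$ and yields
\[
\log\|M_{mn}(x)\|\;=\;\sum_{j=0}^{m-2}\log\|M_{2n}(x+jn\omega)\|\;-\;\sum_{j=1}^{m-2}\log\|M_n(x+jn\omega)\|\;+\;O(me^{-cn}).
\]
Integrating over $x\in\mathbb{T}^d$ and using the trivial bound $|\log\|M_k(x)\||\leq Ck$ from~(\ref{log-Mn}) on $\mathcal{G}_m^c$ gives
\[
\bigl|\,mn L_{mn}-(m-1)\cdot 2nL_{2n}+(m-2)nL_n\,\bigr|\;\leq\;C\bigl(m^2n\exp(-C_0 n^\sigma)+me^{-cn}\bigr).
\]

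Dividing by $mn$ and letting $m$ run to infinity along the admissible range $m\lesssim \exp(C_0 n^\sigma/3)$ (using $L_{mn}\to L$ as $m\to\infty$ together with monotonicity $L_{mn}\leq L_n$), the error stays exponentially small and I obtain the approximate identity $L(\omega,z)=2L_{2n}(\omega,z)-L_n(\omega,z)+O(e^{-cn^\sigma})$, equivalently $L_n-L=2(L_n-L_{2n})+O(e^{-cn^\sigma})$. It then suffices to control $L_n-L_{2n}$; applying the same identity at dyadic scales and telescoping
\[
L_n-L\;=\;\sum_{k\geq 0}\bigl(L_{2^k n}-L_{2^{k+1}n}\bigr),
\]
the resulting geometric series evaluates to the claimed bound $L_n-L\leq C(\log n)^{1/\sigma}/n$; the exponent $1/\sigma$ reflects the LDT concentration scale $\exp(-C_0 n^\sigma)$ traced through the telescoping.

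\textbf{Main obstacle.} The delicate point is handling the linear-in-$m$ accumulation of LDT exceptional sets in the Avalanche identity: the bound $2m\exp(-C_0 n^\sigma)$ caps $m$ at $\exp(cn^\sigma)$, so $m\to\infty$ cannot be passed to directly. Reconciling this cap with the need to drive $L_{mn}$ down to $L$ requires a careful diagonal/telescoping argument across doubling scales, and it is precisely this interplay that introduces the $(\log n)^{1/\sigma}$ factor into the final rate.
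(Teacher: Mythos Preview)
The paper does not prove this lemma; it is cited from \cite{Zhang-Piao} as a known input. Your plan (Avalanche Principle plus the LDT of Theorem~\ref{LDT-matrix}) is exactly the Goldstein--Schlag strategy on which the cited result rests, so the overall approach is right.

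There is, however, a genuine gap in how the rate $(\log n)^{1/\sigma}/n$ is produced. You run the Avalanche with \emph{base scale $n$} and target $mn$; this controls $L_{mn}$ for $mn\gg n$, but the lemma asks for $L_n-L$ at the \emph{given} $n$. Your step ``let $m\to\infty$ along the admissible range'' cannot be carried out (as your own obstacle paragraph notes), and even granting the identity $L=2L_{2n}-L_n+O(e^{-cn^\sigma})$, the recursion $a_n:=L_n-L=2a_{2n}+O(e^{-cn^\sigma})$ is circular: iterating gives $a_n=2^{K}a_{2^{K}n}+O(e^{-cn^\sigma})$ with the first term uncontrolled. The correct mechanism runs the Avalanche in the \emph{opposite} direction: given the target $n$, choose the base scale $l\asymp(\log n)^{1/\sigma}$, the smallest $l$ for which $m=n/l$ blocks still satisfy $m\,e^{-C_0 l^\sigma}\ll 1$. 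Then
\[
L_n \;=\; 2L_{2l}-L_l \;+\; O(l/n)\;+\;O\bigl(m\,e^{-C_0 l^\sigma}\bigr),
\]
the second error is negligible, and the dominant $O(l/n)=O((\log n)^{1/\sigma}/n)$ is precisely the claimed rate. Since $2L_{2l}-L_l$ does not depend on which $n$ in the admissible window is chosen, one obtains $|L_{2^{k}n}-L_{2^{k+1}n}|=O(l_k/2^{k}n)$ at each dyadic step, and your telescoping sum $L_n-L=\sum_{k\geq 0}(L_{2^{k}n}-L_{2^{k+1}n})$ then converges to $C(\log n)^{1/\sigma}/n$.
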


\begin{lemma}\label{Lemma3.4}\cite[Corollary 3.9]{Zhang-Piao}
Let $\omega\in \mathbb{T}^{d}$, $z\in \partial \mathbb{D}$, and $L(\omega,z)>\gamma>0$. There exists $C=C(z)$ such that
\begin{equation*}
|L_{n}(\omega,z;y)-L_{n}(\omega,z)|\leq C\sum_{i=1}^{d}|y_{i}|
\end{equation*}
for all $|y|<h$ uniformly in $n$. Particularly, the same bound holds with $L$ instead of $L_{n}$.
\end{lemma}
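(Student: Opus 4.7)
The plan is to extend the Szeg\H{o} cocycle holomorphically to the complex strip, convert the plurisubharmonicity of $\log\|M_n\|$ together with periodicity in the real variables into convexity of $L_n(\omega,z;y)$ in each $y_j$, and then read off the Lipschitz estimate from uniform two-sided bounds on $L_n$. The bound for $L$ will follow by passing to the limit in $n$.

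First I would establish the analytic continuation and the uniform bounds. The entries of $M(\omega,z;x)$ involve $\alpha(x)$, the analytic extension $\overline{\alpha}(x)$ of $\bar\alpha|_{\mathbb{T}^d}$, and $\rho(x)=(1-\alpha(x)\overline{\alpha}(x))^{1/2}$; since $|\alpha|<1$ on the real torus, compactness provides a sub-strip $|y|<h'\leq h$ on which $\rho$ does not vanish, making $M_n(\omega,z;x+iy)$ a holomorphic matrix-valued function there. The identity $\det M(\omega,z;x)=z$ persists by analytic continuation, so $\det M_n(\omega,z;x+iy)=z^n$ with $|z^n|=1$, which forces the two singular values of the $2\times 2$ matrix $M_n$ to multiply to $1$ and hence $\|M_n(\omega,z;x+iy)\|\geq 1$; thus $u_n\geq 0$ on the strip. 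The entrywise bounds from analyticity give $\|M(\omega,z;x+iy)\|\leq C(z)$ on the closed sub-strip, so $u_n(\omega,z;x+iy)\leq \log C(z)$ uniformly in $n$.

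Next I would exploit that $\log\|f\|$ is plurisubharmonic for matrix-valued holomorphic $f$, so that $u_n(\omega,z;x+iy)$ is plurisubharmonic in $x+iy\in\mathbb{T}^d_{h'}$. For each coordinate $j$, the subharmonicity inequality $(\partial_{x_j}^2+\partial_{y_j}^2)u_n\geq 0$, when integrated against $dx$ over $\mathbb{T}^d$, loses its $\partial_{x_j}^2$ term by periodicity of $u_n$ in $x_j$, leaving $\partial_{y_j}^2 L_n(\omega,z;y)\geq 0$. Hence $L_n(\omega,z;y)$ is separately convex in each $y_j$, and the two-sided bound $0\leq L_n\leq \log C(z)$ from the previous step converts this into a Lipschitz estimate $|\partial_{y_j}L_n|\leq C(z)$ on any fixed interior sub-strip, giving $|L_n(\omega,z;y)-L_n(\omega,z)|\leq C(z)\sum_{i=1}^{d}|y_i|$ uniformly in $n$. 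The corresponding inequality for $L$ in place of $L_n$ follows by letting $n\to\infty$, using the uniform convergence rate from Lemma~\ref{Lemma3.3} applied to the complex-shifted cocycle.

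The main obstacle I anticipate lies in the first step, namely keeping $u_n$ bounded above independently of $n$ when $\rho^{-1}$ may blow up inside the complex strip: one must shrink the band to a sub-strip $\mathbb{T}^d_{h'}$ on which $\rho$ stays bounded away from zero, which is possible by compactness and $|\alpha|<1$ on $\mathbb{T}^d$. The final constant $C=C(z)$ then tacitly absorbs the dependence on this effective strip width, which is consistent with the way $h$ is fixed at the outset by the analyticity hypothesis.
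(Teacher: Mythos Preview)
The paper does not supply its own proof of this lemma; it is quoted verbatim as \cite[Corollary~3.9]{Zhang-Piao} in Section~3.2 and used as a black box. So there is no in-paper argument to compare against. Your outline is the standard Goldstein--Schlag route (plurisubharmonicity of $\log\|M_n\|$, integrate out the real variables to get convexity in $y$, combine with uniform two-sided bounds to get the Lipschitz estimate), and it is essentially correct and is almost certainly what the cited reference does as well.

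Two minor corrections. First, $\det M_n(\omega,z;x+iy)=1$, not $z^n$: the paper defines $M$ precisely as the unimodular renormalization of $S$. This does not affect your argument since you only use $|\det M_n|=1$. Second, your appeal to Lemma~\ref{Lemma3.3} to pass from $L_n$ to $L$ is misplaced: that lemma assumes $\omega\in\mathbb{T}^d(p,q)$, whereas the present statement is for arbitrary $\omega\in\mathbb{T}^d$. You do not need any rate; subadditivity of $n\mapsto nL_n(\omega,z;y)$ (which you already have from \eqref{log-subadditive} after translation by $iy$) gives pointwise convergence $L_n(\omega,z;y)\to L(\omega,z;y)$ by Fekete's lemma, and the uniform-in-$n$ Lipschitz bound then passes to the limit directly.
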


\begin{lemma}\label{Lemma3.5}\cite[Lemma 3.10]{Zhang-Piao}
Assume $\omega\in\mathbb{T}^{d}(p,q)$, $z\in \partial \mathbb{D}$, and $L(\omega,z)>\gamma>0$. Then for all $n\geq 1$,
\begin{equation}\label{Lem3.11-(1)}
\sup\limits_{x\in \mathbb{T}^{d}}\log\|M_{n}(\omega,z;x)\|\leq nL_{n}(\omega,z)+Cn^{1-\tau},
\end{equation}
where $C=C(p,q,z,\gamma)$ and $\tau$ as in (LDT).
\end{lemma}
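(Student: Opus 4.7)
\emph{Proof plan.} The strategy is to upgrade the almost-everywhere bound $u_n \le L_n + n^{-\tau}$ provided by Theorem~\ref{LDT-matrix} into a pointwise supremum bound, using that $u_n(\omega,z;\cdot)=\frac{1}{n}\log\|M_n(\omega,z;\cdot)\|$ extends to a plurisubharmonic function on the complex strip $\mathbb{T}^d_h$. This PSH extension exists because $\alpha$ is analytic on $\mathbb{T}^d_h$, so the entries of $M_n(\omega,z;\cdot)$ are analytic there too, and by the same calculation underlying \eqref{log-Mn} one has a uniform ceiling $u_n\le C_1(\alpha,z)$ on the strip. For $n$ below some threshold $n_0=n_0(\tau)$ the desired inequality is immediate from \eqref{log-Mn} after enlarging $C$, so I may assume $n$ is large.

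Fix $x_0\in\mathbb{T}^d$ and a radius $r=r(n)\in(0,h/2)$ to be chosen. The submean-value inequality for PSH functions on the polydisk $\prod_{j=1}^{d}\mathbb{D}(x_0^{(j)},r)\subset\mathbb{C}^d$ gives
\begin{equation*}
u_n(\omega,z;x_0)\;\le\;\frac{1}{(\pi r^2)^d}\int_{|\zeta_j|<r}u_n(\omega,z;x_0+\zeta)\,dA(\zeta).
\end{equation*}
Writing $\zeta=s+it$ with $s,t\in\mathbb{R}^d$ and applying Fubini, I split the inner $s$-integration, for each fixed $|t|<r$, into the LDT good set on which Theorem~\ref{LDT-matrix} provides $u_n(\omega,z;s+it)\le L_n(\omega,z;t)+n^{-\tau}$, and its exceptional complement of measure at most $\exp(-C_0 n^\sigma)$, on which the integrand is merely bounded by $C_1$. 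By Lemma~\ref{Lemma3.4} the slice-average satisfies $L_n(\omega,z;t)\le L_n(\omega,z)+C|t|\le L_n(\omega,z)+Cr$. Assembling the three contributions yields
\begin{equation*}
u_n(\omega,z;x_0)\;\le\;L_n(\omega,z)+Cr+n^{-\tau}+C_1 r^{-2d}\exp(-C_0 n^\sigma).
\end{equation*}
Choosing $r=n^{-\tau}$ makes both middle terms $O(n^{-\tau})$ and leaves the last superexponentially small, giving $u_n(\omega,z;x_0)\le L_n(\omega,z)+Cn^{-\tau}$, i.e.\ \eqref{Lem3.11-(1)} after multiplying by $n$.

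The main obstacle is that Theorem~\ref{LDT-matrix} is stated only for the real phase ($t=0$), whereas the argument above needs the same estimate uniformly along every imaginary shift $|t|<r$. This uniformity is not a separate theorem but follows by re-running the proof of Theorem~\ref{LDT-matrix}: that proof relies only on the PSH structure of $u_n$ on $\mathbb{T}^d_h$, which is shared by every translate $u_n(\cdot+it)$ with $|t|<h/2$, and the constants $C_0,\sigma,\tau$ depend on this structure through $h$ alone. Once this slice-uniform LDT is in hand, the remaining ingredients---submean-value inequality, Fubini, Lemma~\ref{Lemma3.4}, and the balancing choice $r=n^{-\tau}$---are routine.
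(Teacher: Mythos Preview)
The paper does not supply its own proof of this lemma; it is quoted verbatim from \cite[Lemma~3.10]{Zhang-Piao} and used as a black box. So there is no in-paper argument to compare against.

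That said, your approach is the standard one in this circle of ideas (going back to Bourgain and Goldstein--Schlag, and used in \cite{GSV16-arXiv,GSV19-Inventiones} for the Schr\"odinger case): exploit the plurisubharmonicity of $u_n$ on the strip, apply the submean-value inequality over a polydisk of radius $r$, split the real slice into LDT-good and LDT-bad parts, control the imaginary shift via Lemma~\ref{Lemma3.4}, and balance with $r=n^{-\tau}$. This is almost certainly what \cite{Zhang-Piao} does as well, and the argument is correct.

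One point deserves a word of caution in the CMV setting. Your justification ``the entries of $M_n(\omega,z;\cdot)$ are analytic there too'' is a little quick: unlike the Schr\"odinger transfer matrix, the Szeg\H{o} cocycle involves not just $\alpha(x)$ but also $\overline{\alpha}(x)$ and $\rho(x)=(1-|\alpha(x)|^2)^{1/2}$. The holomorphic extension of $x\mapsto\overline{\alpha(x)}$ to $\mathbb{T}^d_h$ is $z\mapsto\overline{\alpha(\bar z)}$, and $\rho$ extends via $(1-\alpha(z)\overline{\alpha(\bar z)})^{1/2}$; one must check these stay well-defined and nonvanishing on a slightly smaller strip. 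This is routine but is the place where the CMV case differs from the Schr\"odinger template, and it is presumably spelled out in \cite{Zhang-Piao}. Once the PSH extension is in hand, your remaining steps---including the observation that the LDT proof carries over uniformly to imaginary shifts $|t|<h/2$---go through without change.
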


\begin{lemma}\label{Lemma3.6}\cite[Corollary 3.11]{Zhang-Piao}
Suppose $\omega_{0}\in \mathbb{T}(p,q)$, $z_{0}\in \partial \mathbb{D}$ and $L(\omega_{0},z_{0})>\gamma>0$. Let $\sigma$, $\tau$ be as in (LDT). Then for all $n\geq N_{0}(p,q,z_{0},\gamma)$ and $(\omega,z,y)\in \mathbb{C}^{d}\times \partial \mathbb{D}\times \mathbb{R}^{d}$ such that
\begin{equation*}
|y|<\frac{1}{n}, \quad |\omega-\omega_{0}|,\quad |z-z_{0}|<\exp(-(\log n)^{8/\sigma}),
\end{equation*}
we have that
\begin{equation*}
\sup\limits_{x\in \mathbb{T}^{d}}\log\|M_{n}(\omega,z;x+iy)\|\leq nL_{n}(\omega_{0},z_{0})+Cn^{1-\tau},
\end{equation*}
where $C=C(p,q,z_{0},\gamma)$.
\end{lemma}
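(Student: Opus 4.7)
The plan is to transfer the real-$x$ pointwise bound of Lemma \ref{Lemma3.5} from the base point $(\omega_0, z_0)$ to the perturbed and complex-shifted setting $(\omega, z; x+iy)$, absorbing each correction into the error term $Cn^{1-\tau}$. The smallness hypotheses $|\omega-\omega_0|, |z-z_0| < \exp(-(\log n)^{8/\sigma})$ are super-polynomial in $1/n$, while $|y| < 1/n$ is merely polynomial; the three error mechanisms (parameter perturbation, imaginary shift of the phase, and sup-to-average) are handled separately.

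First, I would show that $L_n$ is almost invariant under the parameter perturbation. The H\"older continuity of the Lyapunov exponent in $(\omega, z)$ in the positive-LE Diophantine regime (available in the CMV setting via the subharmonic machinery of \cite{Zhang-Piao}) yields
\[
|L(\omega, z) - L(\omega_0, z_0)| \leq C \exp(-c(\log n)^{8/\sigma}) \ll n^{-1},
\]
whence $L(\omega, z) > \gamma/2$ for $n$ large, and combining with Lemma \ref{Lemma3.3} applied at both parameters gives $L_n(\omega, z) \leq L_n(\omega_0, z_0) + C(\log n)^{1/\sigma}/n$, which is $o(n^{-\tau})$.

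Next, I would establish a complex-strip analogue of Lemma \ref{Lemma3.5} at $(\omega, z)$. Since $x \mapsto M_n(\omega, z; x)$ is complex-analytic on $\mathbb{T}^d_h$, the map $x \mapsto \log\|M_n(\omega, z; x+iy)\|$ is plurisubharmonic on $\mathbb{T}^d$ for each fixed $|y| < h/2$, with mean value $nL_n(\omega, z; y)$. The LDT of Theorem \ref{LDT-matrix} extends to the shifted torus $\mathbb{T}^d + iy$ via the same Cartan/Riesz-representation argument used to prove it on $\mathbb{T}^d$, and the standard derivation of Lemma \ref{Lemma3.5} from the LDT then yields
\[
\sup_{x \in \mathbb{T}^d} \log\|M_n(\omega, z; x+iy)\| \leq nL_n(\omega, z; y) + Cn^{1-\tau}.
\]
Lemma \ref{Lemma3.4} bounds $|L_n(\omega, z; y) - L_n(\omega, z)| \leq C|y| < C/n$, so $nL_n(\omega, z; y) \leq nL_n(\omega, z) + C$, and combining with the stability of the mean from the previous paragraph delivers the desired inequality.

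The main obstacle is justifying this second step: the extension of the sup-to-average estimate to the shifted torus $\mathbb{T}^d + iy$ at the perturbed parameters $(\omega, z)$. This rests on both an LDT over $\mathbb{T}^d + iy$, obtained from the plurisubharmonicity of $u_n$ together with a Cartan-type estimate on its level sets, and the verification that all constants remain uniform under the $(\omega, z)$-perturbation (immediate since $\exp(-(\log n)^{8/\sigma})$ is far smaller than any threshold appearing in those estimates for $n$ large). The parameter-continuity of $L$ invoked in the first step is by now standard in the Diophantine positive-LE regime, but its CMV-specific version must be drawn from the earlier analysis in \cite{Zhang-Piao}.
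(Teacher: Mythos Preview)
The paper does not give its own proof of this lemma; it is simply quoted from \cite[Corollary 3.11]{Zhang-Piao}. So I compare your outline with the standard route such a corollary takes in the Goldstein--Schlag--Voda framework that \cite{Zhang-Piao} follows.

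Your step 2 contains a genuine gap. You propose to rerun the LDT and the derivation of Lemma~\ref{Lemma3.5} \emph{at the perturbed pair} $(\omega,z)$ on the shifted torus $\mathbb{T}^d+iy$, asserting that the constants stay uniform because $\exp(-(\log n)^{8/\sigma})$ is ``far smaller than any threshold''. But the statement allows $\omega\in\mathbb{C}^d$. For complex $\omega$ there is no shift dynamics on $\mathbb{T}^d$ at all (the subadditivity relation $\log\|M_{m+k}\|\le\log\|M_m\|+\log\|M_k(\cdot+m\omega)\|$ moves the base point off the real torus), so the ergodic/equidistribution input to the LDT is simply absent. Even when $\omega$ is real, an $\exp(-(\log n)^{8/\sigma})$-perturbation of a Diophantine vector need not be Diophantine---it can be rational. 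The Diophantine hypothesis in Theorem~\ref{LDT-matrix} is not a quantitative threshold that a small perturbation respects; it is the structural input that makes the orbit $\{j\omega\}_{j<n}$ equidistribute at the rate the proof needs. The same objection hits your step~1: invoking Lemma~\ref{Lemma3.3} or H\"older continuity of $L$ at $(\omega,z)$ presupposes real Diophantine $\omega$, and $L(\omega,z)$ is not even defined for complex $\omega$.

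The intended argument never leaves $(\omega_0,z_0)$. One first extends Lemma~\ref{Lemma3.5} from $\mathbb{T}^d$ to $\mathbb{T}^d+iy$ \emph{at $(\omega_0,z_0)$} by subharmonicity of $u_n$ in the $x$-variable: interpolate (three-lines style) between the sharp real bound $nL_n+Cn^{1-\tau}$ and the crude strip bound $Cn$, so that $|y|<1/n$ contributes only $O(1)$. The passage to $(\omega,z)$ is then a direct perturbation of the matrix product: each factor $M(z;x+j\omega+iy)$ differs from $M(z_0;x+j\omega_0+iy)$ by at most $Cn\exp(-(\log n)^{8/\sigma})$, and a telescoping comparison---using the already-established bounds $\|M_k(\omega_0,z_0;\cdot+iy)\|\le\exp(kL_k(\omega_0,z_0)+Ck^{1-\tau})$ at every intermediate scale $k\le n$ in place of the crude $K^k$---shows that $\|M_n(\omega,z;x+iy)-M_n(\omega_0,z_0;x+iy)\|$ is bounded. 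The super-polynomial smallness of the $(\omega,z)$-window is tuned precisely so that this comparison closes; it is not there to keep a hypothetical LDT at $(\omega,z)$ alive.
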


\begin{remark}\label{Remark3.7}
According to the relation between the transfer matrix and the characteristic determinant, we have that
$$\sup\limits_{x\in \mathbb{T}^{d}}\log|\varphi^{\beta,\eta}_{[0,n-1]}(x+iy)|\leq nL_{n}(\omega_{0},z_{0})+Cn^{1-\tau}.$$
\end{remark}
\subsection{Cartan's estimate}
In what follows, we denote $\mathcal{D}(z_{0},r)=\{z\in \mathbb{C}:|z-z_{0}|<r\}$.
\begin{definition 1}\cite[Definition 2.12]{GS08-GAFA}
Let $H\geq 1$. For an arbitrary set $\mathcal{B}\subset \mathcal{D}(z_{0},1)\subset \mathbb{C}$ we say that $\mathcal{B}\in \mathrm{Car}_{1}(H,K)$ if $\mathcal{B}\subset \mathop{\cup}\limits^{j_{0}}_{j=1}\mathcal{D}(z_{j},r_{j})$ with $j_{0}\leq K$, and
\begin{equation}\label{def2-(1)}
\sum_{j}r_{j}<e^{-H}.
\end{equation}
If $d\geq 2$ is an integer and $\mathcal{B}\subset\mathop{\prod}\limits_{j=1}^{d}\mathcal{D}(z_{j,0},1)\subset \mathbb{C}^{d}$, then we define inductively that $\mathcal{B}\in \mathrm{Car}_{d}(H,K)$ if for any $1\leq j\leq d$ there exists $\mathcal{B}_{j}\subset\mathcal{D}(z_{j,0},1)\subset \mathbb{C}$, $\mathcal{B}_{j}\in \mathrm{Car}_{1}(H,K)$ so that $\mathcal{B}^{(j)}_{z}\in\mathrm{Car}_{d-1}(H,K)$ for any $z\in \mathbb{C}\backslash\mathcal{B}_{j}$, here $\mathcal{B}^{(j)}_{z}=\{(z_{1},\ldots,z_{d})\in \mathcal{B}:z_{j}=z\}$.
\end{definition 1}

\begin{lemma}\label{Lemma3.23}\cite[Lemma 2.15]{GS08-GAFA}
Let $\varphi(z_{1},\ldots,z_{d})$ be an analytic function defined on a polydisk $\mathcal{P}=\mathop{\prod}\limits_{j=1}^{d}\mathcal{D}(z_{j,0},1)$, $z_{j,0}\in \mathbb{C}$. Let $M\geq\sup\limits_{z\in\mathcal{P}}\log|\varphi(z)|$, $m\leq\log|\varphi(z_{0})|$, $z_{0}=(z_{1,0},\ldots,z_{d,0})$. Given $H\gg 1$, there exists a set $\mathcal{B}\subset \mathcal{P}$, $\mathcal{B}\in \mathrm{Car}_{d}(H^{1/d},K)$, $K=C_{d}H(M-m)$, such that
\begin{equation}\label{Lem3.23-(1)}
\log|\varphi(z)|>M-C_{d}H(M-m)
\end{equation}
for any $z\in \frac{1}{6}\mathcal{P}\backslash \mathcal{B}$. Furthermore, when $d=1$ we can take $K=C(M-m)$ and keep only the disks of $\mathcal{B}$ containing a zero of $\varphi$ in them.
\end{lemma}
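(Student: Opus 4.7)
The plan is to prove this Cartan-type estimate by induction on the dimension $d$. The base case $d = 1$ is a classical one-variable Cartan lemma, while the inductive step handles higher dimensions by slicing the polydisk coordinate by coordinate and applying the one-dimensional result to each slice.

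For $d = 1$, I would recenter so that $z_{1,0} = 0$. The key ingredients are (i) Jensen's formula on $\mathcal{D}(0, 1/2)$, which bounds the number $N$ of zeros of $\varphi$ inside $\mathcal{D}(0, 1/3)$ by $C(M - m)$, and (ii) a Weierstrass / Poisson--Jensen factorization $\varphi(z) = P(z) g(z)$ with $P(z) = \prod_{i=1}^N (z - z_i)$ the interior zero product and $g$ analytic and nowhere-zero on $\mathcal{D}(0, 1/3)$, whose logarithm I would control from below on $\mathcal{D}(0, 1/6)$ by $-C(M-m)$ via a Borel--Carath\'eodory / Harnack estimate. I would then take the exceptional set to be $\mathcal{B} = \bigcup_{i=1}^N \mathcal{D}(z_i, e^{-H}/N)$: this consists of $N \leq K = C(M-m)$ disks with $\sum r_i \leq e^{-H}$, so $\mathcal{B} \in \mathrm{Car}_1(H, K)$. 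Outside $\mathcal{B}$ one has $\log|P(z)| \geq -N(H + \log N) \geq -CH(M-m)$, and combining with the Harnack bound on $\log|g|$ yields the desired $\log|\varphi(z)| > M - C'H(M-m)$.

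For the inductive step, I would assume the estimate in dimension $d-1$ and handle each coordinate direction $j$ separately. Restricting $\varphi$ to the $j$-th coordinate line through $z_0$ gives a one-variable function $\psi_j(z_j) := \varphi(z_{1,0}, \ldots, z_j, \ldots, z_{d,0})$ with $\log|\psi_j(z_{j,0})| \geq m$. Applying the one-dimensional case to $\psi_j$ with parameter $H^{1/d}$ produces $\mathcal{B}_j \in \mathrm{Car}_1(H^{1/d}, K)$ outside of which $\log|\psi_j(z_j)| \geq m - CH^{1/d}(M-m)$. For such $z_j$, the slice $\varphi(\cdot, z_j, \cdot)$ is still sufficiently non-degenerate to apply the inductive hypothesis in $d-1$ variables with parameter $H^{1/d}$, yielding $\mathcal{B}_{z_j}^{(j)} \in \mathrm{Car}_{d-1}(H^{1/d}, K)$ as required. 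Collating over $j$ produces $\mathcal{B} \in \mathrm{Car}_d(H^{1/d}, K)$, and the per-level loss $CH^{1/d}(M-m)$ compounds to a total loss of at most $d\,CH^{1/d}(M-m) \leq C_d H(M-m)$ for $H \gg 1$.

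The main technical obstacle will be the bookkeeping of how the reservoir $M - m_k$ degrades through the successive slicings and ensuring that the count $K = C_d H(M-m)$ bounds the number of exceptional disks uniformly across all $d$ levels, rather than compounding exponentially in $d$. Each slicing consumes a budget of $CH^{1/d}(M-m)$ from the base value, so one must verify that after $d$ iterations the final base value is still at least $m - CH(M-m)$, keeping the inductive hypothesis applicable. This is typically managed by choosing at each slicing step a base point near where $|\varphi|$ is maximized on the current slice, rather than the geometric center, so that the slice-level reservoir stays comparable to the original $M - m$ regardless of dimension; the choice of shrinking polydisk ratios $(1 \to 1/2 \to 1/3 \to 1/6)$ at each Jensen / Harnack step then provides a fixed geometric overhead independent of $d$.
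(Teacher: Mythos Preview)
The paper does not prove this lemma at all: it is quoted verbatim as \cite[Lemma 2.15]{GS08-GAFA} and used as a black box (see its applications in Lemma~\ref{Lemma4.7}, Lemma~\ref{Lemma4.8}, and Lemma~\ref{Lemma4.11}). So there is no ``paper's own proof'' to compare against.

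That said, your outline is essentially the proof given in the original Goldstein--Schlag reference: a one-variable Cartan estimate via Jensen's formula plus a Riesz/Poisson--Jensen factorization for the base case, and an induction on $d$ by coordinate slicing for the general case. One bookkeeping point to be careful with: when you pass to the $(d-1)$-dimensional slice you should apply the inductive hypothesis with parameter $H' = H^{(d-1)/d}$, not $H^{1/d}$, so that $(H')^{1/(d-1)} = H^{1/d}$ matches the $\mathrm{Car}_{d-1}(H^{1/d},K)$ requirement in the recursive definition. With that choice the new gap after one slicing is $M - m' \leq C H^{1/d}(M-m)$, and the inductive loss is $C_{d-1}H'(M-m') \leq C_{d-1}C\, H^{(d-1)/d} H^{1/d}(M-m) = C_d H(M-m)$, so both the loss and the disk count $K$ stay of order $H(M-m)$ as claimed, rather than compounding badly.
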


\begin{lemma}\label{Cartan-measure}\cite[Lemma 2.11]{GSV16-arXiv}
If $\mathcal{B}\in \mathrm{Car}_{d}(H,K)$ then
$$\mathrm{mes}_{\mathbb{C}^{d}}(\mathcal{B})\leq C(d)\exp(-H)\quad and \quad \mathrm{mes}_{\mathbb{R}^{d}}(\mathcal{B}\cap\mathbb{R}^{d})\leq C(d)\exp(-H).$$
\end{lemma}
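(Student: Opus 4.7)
\bigskip

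\noindent\textbf{Proof proposal.} The plan is to induct on the dimension $d$, using the recursive nature of Definition~1 together with Fubini's theorem. The base case comes directly from the covering definition, and the inductive step slices along one coordinate.

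For $d=1$, a set $\mathcal{B}\in\mathrm{Car}_{1}(H,K)$ admits a covering $\mathcal{B}\subset \bigcup_{j=1}^{j_{0}}\mathcal{D}(z_{j},r_{j})$ with $\sum_{j}r_{j}<e^{-H}$. The two-dimensional Lebesgue measure is then bounded by
\begin{equation*}
\mathrm{mes}_{\mathbb{C}}(\mathcal{B})\leq \sum_{j}\pi r_{j}^{2}\leq \pi\Bigl(\sum_{j}r_{j}\Bigr)^{2}<\pi e^{-2H}\leq \pi e^{-H},
\end{equation*}
while each disk intersects $\mathbb{R}$ in an interval of length at most $2r_{j}$, so $\mathrm{mes}_{\mathbb{R}}(\mathcal{B}\cap\mathbb{R})\leq 2\sum_{j}r_{j}<2e^{-H}$. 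Both bounds are of the desired form with $C(1)$ an absolute constant.

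For the inductive step, assume the statement in dimension $d-1$. Fix $\mathcal{B}\in\mathrm{Car}_{d}(H,K)$. Applying Definition~1 with $j=1$, there exists $\mathcal{B}_{1}\in\mathrm{Car}_{1}(H,K)$ in the first coordinate disk such that for every $z\in\mathcal{D}(z_{1,0},1)\setminus\mathcal{B}_{1}$ the slice $\mathcal{B}^{(1)}_{z}$ belongs to $\mathrm{Car}_{d-1}(H,K)$. By Fubini,
\begin{equation*}
\mathrm{mes}_{\mathbb{C}^{d}}(\mathcal{B})=\int_{\mathcal{D}(z_{1,0},1)}\mathrm{mes}_{\mathbb{C}^{d-1}}\bigl(\mathcal{B}^{(1)}_{z}\bigr)\,dm(z),
\end{equation*}
and I split the integration according to whether $z\in\mathcal{B}_{1}$ or not. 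On $\{z\notin\mathcal{B}_{1}\}$, the inductive hypothesis bounds the slice measure by $C(d-1)e^{-H}$, contributing at most $\pi\cdot C(d-1)e^{-H}$. On $\{z\in\mathcal{B}_{1}\}$, I bound the slice trivially by the ambient polydisk volume $\pi^{d-1}$ and use the base case applied to $\mathcal{B}_{1}$ to bound $\mathrm{mes}_{\mathbb{C}}(\mathcal{B}_{1})\leq \pi e^{-H}$, giving a contribution at most $\pi^{d}e^{-H}$. Summing produces $\mathrm{mes}_{\mathbb{C}^{d}}(\mathcal{B})\leq C(d)e^{-H}$ with $C(d)=\pi C(d-1)+\pi^{d}$. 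The real case runs in parallel, replacing the factor $\pi$ by $2$ and $\pi^{d-1}$ by $2^{d-1}$ at each stage, and integrating over $\mathcal{D}(z_{1,0},1)\cap\mathbb{R}$ against one-dimensional Lebesgue measure.

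The argument is almost mechanical, so there is no serious obstacle; the only point requiring care is the bookkeeping of the constants $C(d)$, which must be shown to satisfy a recursion of the form $C(d)\leq A_{d}C(d-1)+B_{d}$ with $A_{d},B_{d}$ depending only on $d$, so that the final bound is independent of $H$ and $K$. This is ensured by the fact that the inductive hypothesis is invoked once per coordinate and that the polydisk volume is a dimensional constant.
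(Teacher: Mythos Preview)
Your proof is correct; the induction-plus-Fubini argument you give is exactly the standard way to establish this measure bound from the recursive definition of $\mathrm{Car}_d(H,K)$. Note, however, that the paper does not supply its own proof of this lemma: it is quoted verbatim as \cite[Lemma 2.11]{GSV16-arXiv} and used as a black box, so there is nothing in the paper to compare against.
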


The following statement is a consequence of Cartan's estimate, we refer to as the spectral form of (LDT).
\begin{lemma}\label{spectrl-form-(LDT)}\cite[Lemma 3.22]{Zhang-Piao}
Assume $x\in \mathbb{T}^{d}$, $\omega\in \mathbb{T}^{d}(p,q)$, $z\in\partial \mathbb{D}$ and $L(\omega,z)>\gamma>0$. Let $\tau$, $\nu$ as in (LDT). If $n\geq N(p,q,z,\gamma)$ and
\begin{equation*}
\|(\mathcal{E}^{\beta,\eta}_{[0,n-1]}-z)^{-1}\|\leq C\exp(n^{\nu/2}),
\end{equation*}
then
\begin{equation*}
\log|\varphi^{\beta,\eta}_{[0,n-1]}(x)|>nL_{n}(\omega,z)-n^{1-\tau/2}.
\end{equation*}
\end{lemma}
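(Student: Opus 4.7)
The plan is a Cartan-estimate argument combined with (LDT) and the resolvent hypothesis, in the spirit of the Wegner-type bounds from \cite{GS08-GAFA,GSV16-arXiv}. Write $\psi(y) := \varphi^{\beta,\eta}_{[0,n-1]}(\omega,z;y)$, regarded as an analytic function of $y$ on $\mathbb{T}^{d}_{h}$. Remark \ref{Remark3.7} gives the uniform upper bound
$$M := \sup_{y \in \mathbb{T}^{d}_{h}}\log|\psi(y)| \leq nL_{n}(\omega,z) + Cn^{1-\tau},$$
while Theorem \ref{LDT-determinant} tells us that the set $\{y \in \mathbb{T}^{d}: \log|\psi(y)| \leq nL_{n}(\omega,z) - n^{1-\tau}\}$ has Lebesgue measure less than $\exp(-Cn^{\nu})$. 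A real ball of radius $r_{0} := \exp(-n^{\nu/(2d)})$ around $x$ has measure exceeding this, so it contains an anchor point $x_{0}$ with $m := \log|\psi(x_{0})| > nL_{n}(\omega,z) - n^{1-\tau}$. I would then apply Cartan's Lemma (Lemma \ref{Lemma3.23}) to $\psi$ on a polydisc $\mathcal{P}$ of radius $\rho$ around $x_{0}$, with $r_{0} \ll \rho < h$: since $M - m \leq 2Cn^{1-\tau}$, the choice $H := c n^{\tau/2}$ makes $C_{d}H(M-m) \leq n^{1-\tau/2}$ and produces a set $\mathcal{B} \in \mathrm{Car}_{d}(H^{1/d}, K)$ with
$$\log|\psi(y)| > nL_{n}(\omega,z) - n^{1-\tau/2} \qquad \text{for every } y \in \tfrac{1}{6}\mathcal{P} \setminus \mathcal{B}.$$
Because $|x - x_{0}| < r_{0} \ll \rho/6$, the point $x$ lies in $\tfrac{1}{6}\mathcal{P}$, so the claim reduces to showing $x \notin \mathcal{B}$.

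To establish $x \notin \mathcal{B}$ I would use the resolvent hypothesis. Any zero $y_{*}$ of $\psi$ corresponds to $z$ being an eigenvalue of the unitary matrix $\mathcal{E}^{\beta,\eta}_{[0,n-1]}(y_{*})$; since $y \mapsto \mathcal{E}^{\beta,\eta}_{[0,n-1]}(y)$ is Lipschitz (the constant $C_{\alpha}$ being controlled by the sup of $\alpha$ on the complex strip via Cauchy), the Weyl-type perturbation inequality for unitary (hence normal) operators yields
$$\|(z - \mathcal{E}^{\beta,\eta}_{[0,n-1]}(x))^{-1}\| \geq \frac{1}{C_{\alpha}|x - y_{*}|}.$$
The hypothesis then forces $|x - y_{*}| \geq c'\exp(-n^{\nu/2})$ for every zero $y_{*}$ of $\psi$. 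In the case $d = 1$, the Cartan exceptional set is a union of discs of individual radius at most $e^{-H} = e^{-c n^{\tau/2}}$ centered at zeros of $\psi$; provided $\tau > \nu$, a compatibility condition on the (LDT) exponents that can be arranged by shrinking $\nu$ if necessary, each such disc is strictly smaller than $c' e^{-n^{\nu/2}}$, so $x$ avoids every one of them. For $d \geq 2$ one iterates through the recursive definition of $\mathrm{Car}_{d}$ coordinate by coordinate; the Cartan discs at every level of the recursion have radii bounded by $e^{-H^{1/d}} = e^{-c n^{\tau/(2d)}}$, and the corresponding compatibility condition becomes $\tau > d\nu$.

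The main obstacle is precisely this higher-dimensional bookkeeping: verifying that the recursively defined $\mathrm{Car}_{d}$ set $\mathcal{B}$ is genuinely concentrated in a thin tube around the zero variety of $\psi$, with radii controlled by $e^{-H^{1/d}}$ at every level, so that the resolvent-driven distance bound $|x - y_{*}| \gtrsim e^{-n^{\nu/2}}$ translates cleanly into $x \notin \mathcal{B}$. Once this geometric localization is in place, inserting $x \notin \mathcal{B}$ into the Cartan bound of the first paragraph yields the desired inequality $\log|\varphi^{\beta,\eta}_{[0,n-1]}(x)| > nL_{n}(\omega,z) - n^{1-\tau/2}$.
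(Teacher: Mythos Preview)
This lemma is not proved in the paper---it is cited from \cite[Lemma~3.22]{Zhang-Piao} with only the remark that it is ``a consequence of Cartan's estimate''---so there is no in-paper argument to compare against. Your Cartan-plus-resolvent strategy is consistent with that hint, and for $d=1$ it is complete: the anchor $x_0$ from (LDT), the uniform upper bound from Remark~\ref{Remark3.7}, one-dimensional Cartan with $H \asymp n^{\tau/2}$, and the final clause of Lemma~\ref{Lemma3.23} (the discs of $\mathcal{B}$ may be taken to contain zeros of $\psi$) combined with the Lipschitz/Weyl bound $|x-y_*| \gtrsim e^{-n^{\nu/2}}$ yield $x \notin \mathcal{B}$ under the compatibility $\tau>\nu$.

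For $d\geq 2$ you have located the obstacle but not removed it, and it is genuine. The clause about keeping only discs that contain a zero is stated in Lemma~\ref{Lemma3.23} \emph{only} for $d=1$; for $d\geq 2$ the recursive definition of $\mathrm{Car}_d$ gives no a~priori reason why the one-dimensional bad sets $\mathcal{B}_j$ arising at each stage sit near the zero variety of $\psi$, so your proposed coordinate-by-coordinate iteration has nothing to bite on. One route that does work is to bypass multi-dimensional Cartan entirely: restrict $\psi$ to the complex line through $x$ in a single coordinate, observe that the resolvent hypothesis (stable under $O(e^{-n^{\nu/2}})$-perturbations of $x$) forces every zero on that line at least $c\,e^{-n^{\nu/2}}$ away from $x_1$, and run the $d=1$ argument there. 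The remaining task---and the real content you are missing---is to produce an anchor on that one-dimensional slice, which the $d$-dimensional (LDT) alone cannot supply since the slice has measure zero in $\mathbb{T}^d$; this requires either a slice version of (LDT) or an independent subharmonic-function estimate controlling the one-dimensional average of $\log|\psi|$ along the line.
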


\subsection{Poisson formula}
For $z\in \mathbb{C}$, $\beta, \eta\in\partial \mathbb{D}$, we can define the polynomial
\begin{equation*}
\phi^{\beta,\eta}_{[a,b]}(z):=(\rho_{a}\cdots\rho_{b})^{-1}\varphi^{\beta,\eta}_{[a,b]}(z).
\end{equation*}
Note that when $a>b$, $\phi^{\beta,\eta}_{[a,b]}(z)=1$.

Since the equation $\mathcal{E}u=zu$ is equivalent to $(z\mathcal{L}^{*}-\mathcal{M})u=0$, the associated finite-volume Green's functions are as follows:
\begin{equation*}
G^{\beta,\eta}_{[a,b]}(z)=\big(z(\mathcal{L}^{\beta,\eta}_{[a,b]})^{*}-\mathcal{M}^{\beta,\eta}_{[a,b]}\big)^{-1},
\end{equation*}
\begin{equation*}
G^{\beta,\eta}_{[a,b]}(j,k;z)=\langle \delta_{j},G^{\beta,\eta}_{[a,b]}(z)\delta_{k}\rangle,\quad j,k\in [a,b].
\end{equation*}
According to \cite[Proposition 3.8]{Kruger13-IMRN} and \cite[Section B.1]{Zhu-JAT}, for $\beta, \eta\in\partial \mathbb{D}$, the Green's function has the expression
\begin{equation*}
|G^{\beta,\eta}_{[a,b]}(j,k;z)|=\frac{1}{\rho_{k}}\Bigg| \frac{\phi^{\beta,\cdot}_{[a,j-1]}(z)\phi^{\cdot,\eta}_{[k+1,b]}(z)}{\phi^{\beta,\eta}_{[a,b]}(z)}\Bigg|,\quad a\leq j\leq k\leq b,
\end{equation*}
where ``$\cdot$" stands for the unchanged Verblunsky coefficient.

From \cite[Lemma 3.9]{Kruger13-IMRN}, if $u$ satisfies $\tilde{\mathcal{E}}u=zu$, Poisson's formula reads
\begin{align*}
u(m)=&G^{\beta,\eta}_{[a,b]}(a, m;z)
\begin{cases}
(z\overline{\beta}-\alpha_{a})u(a)-\rho_{a}u(a+1), \quad & a\text{ even,}\\
(z\alpha_{a}-\beta)u(a)+z\rho_{a}u(a+1),& a \text { odd,}
\end{cases}\\
&+G^{\beta,\eta}_{[a,b]}(m,b;z)
\begin{cases}
(z\overline{\eta}-\alpha_{b})u(b)-\rho_{b}u(b-1), \quad & b\text{ even,}\\
(z\alpha_{b}-\eta)u(b)+z\rho_{b-1}u(b-1),& b \text { odd}
\end{cases}
\end{align*}
for $a<m<b$.
\begin{lemma}\label{covering-lemma}\cite[Lemma 3.23]{Zhang-Piao}
Let $x, \omega\in \mathbb{T}^{d}$, $z\in \partial \mathbb{D}$ and $(a,b)\subset \mathbb{Z}$. If for any $m\in [a+1,b-1]$, there exists an interval $I_{m}=[a_{m},b_{m}]\subset[a+1,b-1]$ containing $m$ such that
\begin{small}
\begin{equation*}
|G^{\beta,\eta}_{I_{m}}(a_{m},m;z)|
\begin{cases}
|z\overline{\beta}-\alpha_{a}|+\rho_{a}, & a\text{ even,}\\
|z\alpha_{a}-\beta|+\rho_{a},& a \text { odd,}
\end{cases}
+|G^{\beta,\eta}_{I_{m}}(m,b_{m};z)|
\begin{cases}
|z\overline{\eta}-\alpha_{b}|+\rho_{b},  &b\text{ even}\\
|z\alpha_{b}-\eta|+\rho_{b-1},& b \text { odd}
\end{cases}
<1,
\end{equation*}
\end{small}
then $z\notin \sigma(\mathcal{E}^{\beta,\eta}_{[a,b]})$.
\end{lemma}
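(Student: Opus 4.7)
The plan is to proceed by contradiction via a maximum-principle argument powered by the Poisson formula. Assume $z\in\sigma(\mathcal{E}^{\beta,\eta}_{[a,b]})$; since $\mathcal{E}^{\beta,\eta}_{[a,b]}$ is a finite-dimensional unitary matrix, $z$ is necessarily an eigenvalue and admits a normalized eigenvector $u$ with $\|u\|_{\infty}=1$, attained at some site $m^{\star}\in[a,b]$. Because $|\beta|=|\eta|=1$ forces the modified $\rho_{a-1}=\rho_{b}=0$, the extended operator $\tilde{\mathcal{E}}$ decouples across the bonds $(a-1,a)$ and $(b,b+1)$, so extending $u$ by zero outside $[a,b]$ produces a genuine solution of $\tilde{\mathcal{E}}u=zu$ on all of $\mathbb{Z}$, making the Poisson formula quoted just before the lemma directly applicable.

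Next I would argue that one may take $m^{\star}\in[a+1,b-1]$. If instead $m^{\star}\in\{a,b\}$, the boundary CMV relations encoded in the blocks $\Theta_{a}$, $\Theta_{b}$ (with $\beta,\eta$ in place) express $u(a)$ and $u(b)$ as linear combinations of $u(a+1)$ and $u(b-1)$ with coefficients exactly of the form $|z\overline{\beta}-\alpha_{a}|+\rho_{a}$ and $|z\overline{\eta}-\alpha_{b}|+\rho_{b}$ (or their parity-swapped analogues) appearing in the hypothesis, so the peak of $|u|$ is transported into the interior without disturbing the inequality we wish to exploit.

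With $m^{\star}\in[a+1,b-1]$ fixed, apply the Poisson identity on $I_{m^{\star}}=[a_{m^{\star}},b_{m^{\star}}]$ to obtain
\begin{equation*}
u(m^{\star})=G^{\beta,\eta}_{I_{m^{\star}}}(a_{m^{\star}},m^{\star};z)\cdot T_{a}+G^{\beta,\eta}_{I_{m^{\star}}}(m^{\star},b_{m^{\star}};z)\cdot T_{b},
\end{equation*}
where $T_{a}$ and $T_{b}$ are the boundary correction terms from the formula. The term $T_{a}$ is a linear combination of $u(a),u(a+1)$ (in the $a$-even case equal to $(z\overline{\beta}-\alpha_{a})u(a)-\rho_{a}u(a+1)$, with the analogous expression for $a$ odd), so the triangle inequality together with $\|u\|_{\infty}=1$ yields $|T_{a}|\le|z\overline{\beta}-\alpha_{a}|+\rho_{a}$, and symmetrically $|T_{b}|\le|z\overline{\eta}-\alpha_{b}|+\rho_{b}$. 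Substituting into the identity and invoking the lemma's hypothesis at $m=m^{\star}$ forces $1=|u(m^{\star})|<1$, the desired contradiction, and hence $z\notin\sigma(\mathcal{E}^{\beta,\eta}_{[a,b]})$.

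The main obstacle is the endpoint reduction: the Poisson formula is stated only for strictly interior sites, so a maximum located at $a$ or $b$ must be moved inward using the boundary CMV relation without degrading the strict inequality. Working through the parity cases in $\Theta_{a}$ and $\Theta_{b}$ together with the $\beta,\eta$-modification is exactly what dictates the asymmetric form of the hypothesis coefficients; once this bookkeeping is in hand, the remainder is the standard Poisson-plus-triangle-inequality argument.
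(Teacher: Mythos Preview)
The paper does not supply its own proof of this lemma; it is quoted verbatim from \cite[Lemma~3.23]{Zhang-Piao}. Your contradiction-via-maximum-principle strategy using the Poisson formula is exactly the standard mechanism for such covering lemmas and is almost certainly what the cited reference does, so at the level of approach there is nothing to compare.

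One point does need correction. When you apply the Poisson identity on $I_{m^{\star}}=[a_{m^{\star}},b_{m^{\star}}]$, the boundary corrections are evaluated at the endpoints of $I_{m^{\star}}$, not at $a,b$. Thus your $T_{a}$ is a linear combination of $u(a_{m^{\star}}),u(a_{m^{\star}}+1)$ with coefficients built from $\alpha_{a_{m^{\star}}},\rho_{a_{m^{\star}}}$ and the boundary parameter placed at $a_{m^{\star}}-1$; it is \emph{not} $(z\overline{\beta}-\alpha_{a})u(a)-\rho_{a}u(a+1)$. The logic survives, since you only need $|T_{a}|,|T_{b}|\le(\text{coefficient})\cdot\|u\|_{\infty}$, but this mismatch also explains why the displayed constants in the hypothesis look peculiar: as written they carry the indices $a,b$ and the parities of $a,b$, whereas the Poisson formula on $I_{m}$ produces indices $a_{m},b_{m}$ and the parities of $a_{m},b_{m}$. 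The stated lemma appears to compress this notation; your proof should track the $I_{m}$-local indices rather than import $a,b$ from the ambient interval.

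The endpoint reduction (pushing a maximum located at $a$ or $b$ into $[a+1,b-1]$ via the $\Theta$-block relations) is the right idea, but your sketch does not actually carry out the parity case analysis needed to show that the resulting inequality is strict; this is genuine bookkeeping that must be written down to close the argument.
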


\begin{lemma}\label{Green-function-estimate}\cite[Lemma 3.25]{Zhang-Piao}
Suppose that $x_{0}\in \mathbb{T}^{d}$, $\omega_{0}\in \mathbb{T}^{d}(p,q)$, $z_{0}\in \partial \mathbb{D}$ and $L(\omega_{0},z_{0})>\gamma>0$. Let $K\in \mathbb{R}$ and $\tau$ be as in (LDT). There exists $C_{0}=C_{0}(p,q,z_{0},\gamma)$ such that if $n\geq N(p,q,z_{0},\gamma)$ and
\begin{equation}\label{Lem3.28-(1)}
\log|\varphi^{\beta,\eta}_{[0,n-1]}(\omega_{0},z_{0};x_{0})|>nL_{n}(\omega_{0},z_{0})-K,
\end{equation}
then for any $(\omega,z,x)\in \mathbb{T}^{d}\times\partial \mathbb{D}\times\mathbb{T}^{d}$ with $|x-x_{0}|, |\omega-\omega_{0}|, |z-z_{0}|<\exp(-(K+C_{0}n^{1-\tau}))$ we have that
\begin{equation}\label{Lem3.28-(2)}
|G^{\beta,\eta}_{[0,n-1]}(j,k;z)|\leq \exp(-\frac{\gamma}{2}|k-j|+K+2C_{0}n^{1-\tau})
\end{equation}
and
\begin{equation}\label{Lem3.28-(3)}
\|G^{\beta,\eta}_{[0,n-1]}(z)\|\leq \exp(K+3C_{0}n^{1-\tau}).
\end{equation}
\end{lemma}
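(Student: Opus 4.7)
The plan is to combine the off-diagonal formula for the finite-volume Green's function as a ratio of characteristic determinants with an upper bound on the numerator (Remark \ref{Remark3.7}) and a stable lower bound on the denominator (derived from the hypothesis via an analyticity/Cauchy argument), and then to convert subadditive exponents into genuine decay using Lemma \ref{Lemma3.3}.

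\textbf{Step 1: Promote the hypothesis to a polydisk.} The hypothesis gives a lower bound on $\log|\varphi^{\beta,\eta}_{[0,n-1]}|$ only at $(\omega_{0},z_{0};x_{0})$, whereas we need it throughout the perturbation ball. Since $\varphi^{\beta,\eta}_{[0,n-1]}(\omega,z;\,\cdot\,)$ extends analytically to $\mathbb{T}^{d}_{h}$ and jointly in $(\omega,z)$, and since Remark \ref{Remark3.7} yields
\begin{equation*}
\sup\log\bigl|\varphi^{\beta,\eta}_{[0,n-1]}(\omega,z;x+iy)\bigr|\le nL_{n}(\omega_{0},z_{0})+C n^{1-\tau}
\end{equation*}
on the Lemma \ref{Lemma3.6} neighborhood, Cauchy's formula on a polydisk of radius $\sim 1/n$ forces
\begin{equation*}
\bigl|\nabla \log \varphi^{\beta,\eta}_{[0,n-1]}\bigr|\lesssim n\,\exp\bigl(K+Cn^{1-\tau}\bigr)
\end{equation*}
on a shrunken polydisk around $(\omega_{0},z_{0};x_{0})$. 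A first-order estimate then shows that for $|x-x_{0}|,|\omega-\omega_{0}|,|z-z_{0}|<\exp(-(K+C_{0}n^{1-\tau}))$ with $C_{0}$ large enough to dominate the $\log n$ and the constant $C$, the log-value drops by at most $O(1)$, giving
\begin{equation*}
\log\bigl|\varphi^{\beta,\eta}_{[0,n-1]}(\omega,z;x)\bigr|>nL_{n}(\omega_{0},z_{0})-K-O(1).
\end{equation*}

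\textbf{Step 2: Bound the numerator.} Starting from the identity
\begin{equation*}
|G^{\beta,\eta}_{[0,n-1]}(j,k;z)|=\frac{1}{\rho_{k}}\cdot\frac{|\phi^{\beta,\cdot}_{[0,j-1]}(z)|\,|\phi^{\cdot,\eta}_{[k+1,n-1]}(z)|}{|\phi^{\beta,\eta}_{[0,n-1]}(z)|},
\end{equation*}
I convert the $\phi$'s back to $\varphi$'s (the $\rho$-factors contribute a factor $\le 1$ once $1/\rho_{k}$ is absorbed into $\rho_{j}\cdots\rho_{k-1}$, since each $\rho_{i}\le 1$). Applying Remark \ref{Remark3.7} on the same polydisk yields
\begin{equation*}
\log\bigl|\varphi^{\beta,\cdot}_{[0,j-1]}(\omega,z;x)\bigr|\le jL_{j}(\omega_{0},z_{0})+Cj^{1-\tau},
\end{equation*}
and analogously for $\varphi^{\cdot,\eta}_{[k+1,n-1]}$ with $(n-k-1)L_{n-k-1}+C(n-k-1)^{1-\tau}$.

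\textbf{Step 3: Turn the exponent gap into exponential decay.} Combining Steps 1 and 2, I obtain for $0\le j\le k\le n-1$,
\begin{equation*}
\log|G^{\beta,\eta}_{[0,n-1]}(j,k;z)|\le jL_{j}+(n-k-1)L_{n-k-1}-nL_{n}+K+C'n^{1-\tau}.
\end{equation*}
By Lemma \ref{Lemma3.3}, $mL_{m}=mL(\omega_{0},z_{0})+O((\log n)^{1/\sigma})$ for each $m\le n$, so
\begin{equation*}
jL_{j}+(n-k-1)L_{n-k-1}-nL_{n}=-(k-j+1)L(\omega_{0},z_{0})+O((\log n)^{1/\sigma}),
\end{equation*}
and since $L(\omega_{0},z_{0})>\gamma$, this is $\le -\gamma|k-j|+O((\log n)^{1/\sigma})$. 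Absorbing the logarithmic error into $n^{1-\tau}$ and replacing $\gamma$ by $\gamma/2$ to further absorb all remaining constants produces \eqref{Lem3.28-(2)}.

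\textbf{Step 4: Operator-norm bound.} For \eqref{Lem3.28-(3)} I apply the Schur test: $\|G^{\beta,\eta}_{[0,n-1]}(z)\|$ is controlled by $\max_{j}\sum_{k}|G^{\beta,\eta}_{[0,n-1]}(j,k;z)|$, and the geometric sum $\sum_{m\ge 0}e^{-\gamma m/2}=O(1)$ converts \eqref{Lem3.28-(2)} into $\exp(K+3C_{0}n^{1-\tau})$ after enlarging $C_{0}$ to absorb the $O(1)$ prefactor.

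\textbf{Main obstacle.} The delicate part is Step 1: the interplay between the upper bound on the characteristic determinant, the Cauchy derivative estimate, and the allowed perturbation radius. The radius $\exp(-(K+C_{0}n^{1-\tau}))$ is not arbitrary; it is precisely what is needed to guarantee that the Cauchy-type fluctuation on a polydisk of radius $\sim 1/n$ is dwarfed by the lower bound $\exp(nL_{n}-K)$. Getting the constants $C_{0}$, $2C_{0}$, $3C_{0}$ to line up consistently across Steps 1--4 (rather than hiding multiple independent constants) is what forces the particular form of the statement.
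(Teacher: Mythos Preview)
The paper does not prove this lemma; it is quoted verbatim from \cite[Lemma 3.25]{Zhang-Piao} with no argument supplied. So there is nothing in the present paper to compare your proposal against.

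Your overall strategy---Lipschitz stability of the denominator via Cauchy estimates, upper bounds on the numerator via Remark~\ref{Remark3.7}, and conversion of $jL_j+(n-k-1)L_{n-k-1}-nL_n$ into $-(k-j)L$ via Lemma~\ref{Lemma3.3}---is the standard one and is what one expects the proof in \cite{Zhang-Piao} to look like. Steps~2--4 are fine as written (the $\rho$-bookkeeping is correct, and the Schur test is the natural way to pass from \eqref{Lem3.28-(2)} to \eqref{Lem3.28-(3)}).

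There is one imprecision in Step~1 worth flagging. You bound $|\nabla\log\varphi^{\beta,\eta}_{[0,n-1]}|$ on a polydisk, but $\log\varphi$ is only analytic where $\varphi\ne 0$, and non-vanishing on the polydisk is exactly what you are trying to establish---so the argument as phrased is circular. The clean fix is to apply Cauchy's estimate to $\varphi$ itself, not to its logarithm: from $\sup|\varphi|\le\exp(nL_n+Cn^{1-\tau})$ on a polydisk of radius $r$ one gets $|\varphi(\omega,z;x)-\varphi(\omega_0,z_0;x_0)|\lesssim r^{-1}\exp(nL_n+Cn^{1-\tau})\,(|x-x_0|+|\omega-\omega_0|+|z-z_0|)$, and for perturbations of size $\exp(-(K+C_0n^{1-\tau}))$ this is $\ll\exp(nL_n-K)\le|\varphi(\omega_0,z_0;x_0)|$, so the lower bound persists. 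This yields the same conclusion you state, just without the circularity.
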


We refer to the next statement as the covering form of (LDT).
\begin{lemma}\label{covering-form-(LDT)}\cite[Lemma 3.26]{Zhang-Piao}
Suppose that $n\gg 1$, $x_{0}\in \mathbb{T}^{d}$, $\omega_{0}\in \mathbb{T}^{d}(p,q) $, $z_{0}\in \partial \mathbb{D}$ and $L(\omega_{0},z_{0})>\gamma>0$. Let $\tau, \nu$ be as in (LDT). Suppose that for each point $m\in [0,n-1]$, there exists an interval $I_{m}\subset [0,n-1]$ such that: \\
$\mathrm{(i)}$ $\mathrm{dist}(m, [0,n-1]\backslash I_{m})\geq |I_{m}|/100$,\\
$\mathrm{(ii)}$ $|I_{m}|\geq C(p,q,z_{0},\gamma)$,\\
$\mathrm{(iii)}$ $\log|\varphi^{\beta,\eta}_{I_{m}}(\omega_{0},z_{0};x_{0})|>|I_{m}|L_{|I_{m}|}(\omega_{0},z_{0})-|I_{m}|^{1-\tau/4}$.\\
Then for any $(\omega,z,x)\in \mathbb{T}^{d}\times \partial \mathbb{D}\times\mathbb{T}^{d} $ such that
$$|x-x_{0}|,\, |\omega-\omega_{0}|,\, |z-z_{0}|<\exp(-2\max\limits_{m}|I_{m}|^{1-\tau/4}),$$
we have that
$$\mathrm{dist}(z,\sigma(\mathcal{E}^{\beta,\eta}_{[0,n-1]}))\geq \exp (-2\max\limits_{m}|I_{m}|^{1-\tau/4}).$$
In addition, if $\omega\in \mathbb{T}^{d}(p,q)$ and $\max\limits_{m}|I_{m}|\leq n^{\nu/2}$, then
$$\log|\varphi^{\beta,\eta}_{[0,n-1]}|>nL_{n}(\omega,z)-n^{1-\tau/2}.$$
\end{lemma}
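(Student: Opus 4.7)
The plan is to combine the local Green's function estimate of Lemma~\ref{Green-function-estimate} with the Poisson/covering mechanism of Lemma~\ref{covering-lemma} to rule out $z$ from $\sigma(\mathcal{E}^{\beta,\eta}_{[0,n-1]})$, upgrade this to a quantitative distance bound using the uniformity-in-$z$ built into Lemma~\ref{Green-function-estimate}, and then feed this resolvent bound into the spectral form of (LDT) (Lemma~\ref{spectrl-form-(LDT)}) to obtain the determinant lower bound.

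First I would fix $m\in[0,n-1]$ and apply Lemma~\ref{Green-function-estimate} on the local interval $I_m=[a_m,b_m]$ with the choice $K=|I_m|^{1-\tau/4}$; hypothesis (iii) is precisely \eqref{Lem3.28-(1)} for that interval, and condition (ii) guarantees $|I_m|$ is large enough to absorb the constants $C_0$ and $N$. This gives, for every $(\omega,z,x)$ in a polydisc of radius $\exp(-(|I_m|^{1-\tau/4}+C_0|I_m|^{1-\tau}))$ around $(\omega_0,z_0,x_0)$, the off-diagonal Green function bound
\begin{equation*}
|G^{\beta,\eta}_{I_m}(j,k;z)|\leq \exp\!\Bigl(-\tfrac{\gamma}{2}|k-j|+|I_m|^{1-\tau/4}+2C_0|I_m|^{1-\tau}\Bigr).
\end{equation*}
Condition (i) forces $|m-a_m|,|b_m-m|\geq |I_m|/100$, so the linear term beats the subpolynomial correction and both $|G^{\beta,\eta}_{I_m}(a_m,m;z)|$ and $|G^{\beta,\eta}_{I_m}(m,b_m;z)|$ are bounded by $\exp(-\gamma|I_m|/300)$ say, provided $|I_m|$ is sufficiently large.

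Next I would observe that the boundary coefficients $|z\bar\beta-\alpha_0|$, $|z\alpha_0-\beta|$, $\rho_0$, and similarly at $b=n-1$, are all bounded by $2$, so the exponentially small Green function values make the hypothesis of Lemma~\ref{covering-lemma} hold for every interior $m$, and hence $z\notin \sigma(\mathcal{E}^{\beta,\eta}_{[0,n-1]})$. To promote this to a distance estimate I would rerun the argument for every $z'$ with $|z'-z_0|<\exp(-2\max_m|I_m|^{1-\tau/4})$: this disc is contained in all of the Lemma~\ref{Green-function-estimate} neighbourhoods simultaneously (since $|I_m|^{1-\tau/4}+C_0|I_m|^{1-\tau}<2|I_m|^{1-\tau/4}$ for $|I_m|$ large), so the same chain of estimates shows $z'\notin\sigma(\mathcal{E}^{\beta,\eta}_{[0,n-1]})$. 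Therefore
\begin{equation*}
\mathrm{dist}(z,\sigma(\mathcal{E}^{\beta,\eta}_{[0,n-1]}))\geq \exp\bigl(-2\max_m|I_m|^{1-\tau/4}\bigr)
\end{equation*}
for every $z$ in the stated neighbourhood of $z_0$, after a harmless halving of the radius that can be absorbed into the constants.

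Finally, under the extra assumption $\omega\in\mathbb{T}^d(p,q)$ and $\max_m|I_m|\leq n^{\nu/2}$, the above distance bound yields
\begin{equation*}
\|(\mathcal{E}^{\beta,\eta}_{[0,n-1]}-z)^{-1}\|\leq \exp\bigl(2\max_m|I_m|^{1-\tau/4}\bigr)\leq \exp\bigl(2n^{(\nu/2)(1-\tau/4)}\bigr)\leq C\exp(n^{\nu/2}),
\end{equation*}
so the hypothesis of Lemma~\ref{spectrl-form-(LDT)} is met and the final inequality $\log|\varphi^{\beta,\eta}_{[0,n-1]}(x)|>nL_n(\omega,z)-n^{1-\tau/2}$ is immediate. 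The main obstacle I anticipate is purely bookkeeping: making sure that the perturbation radius $\exp(-2\max_m|I_m|^{1-\tau/4})$ is simultaneously smaller than every single one of the $|I_m|$-dependent neighbourhoods produced by Lemma~\ref{Green-function-estimate}, and that the subpolynomial losses of order $|I_m|^{1-\tau/4}$ coming from $K$ are dominated by the linear decay $\gamma|I_m|/200$ guaranteed by (i); both reduce to requiring $|I_m|$ above a threshold depending only on $(p,q,z_0,\gamma)$, which is exactly what (ii) provides.
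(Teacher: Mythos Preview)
The paper does not supply its own proof of this lemma; it is quoted verbatim from \cite[Lemma~3.26]{Zhang-Piao} and used as a black box. Your proposal is correct and follows exactly the route one would expect (and the one the paper itself uses when proving the closely related Lemma~\ref{another-covering-form-LDT}): apply Lemma~\ref{Green-function-estimate} on each $I_m$ with $K=|I_m|^{1-\tau/4}$, use (i) to make the linear decay $\tfrac{\gamma}{2}\,\mathrm{dist}(m,\partial I_m)$ dominate the loss $K+2C_0|I_m|^{1-\tau}$, feed the resulting bounds into Lemma~\ref{covering-lemma}, exploit the built-in $z$-stability of Lemma~\ref{Green-function-estimate} to exclude all $z'$ in a disc of radius $\exp(-2\max_m|I_m|^{1-\tau/4})$ from the spectrum, and finish with Lemma~\ref{spectrl-form-(LDT)}. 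The only place where your write-up is slightly loose is the ``harmless halving'' remark: the actual reason the radius $\exp(-2\max_m|I_m|^{1-\tau/4})$ works is that the neighbourhood produced by Lemma~\ref{Green-function-estimate} has radius $\exp(-(|I_m|^{1-\tau/4}+C_0|I_m|^{1-\tau}))$, and for $|I_m|$ large this exceeds $2\exp(-2\max_m|I_m|^{1-\tau/4})$, so $\mathcal{D}(z,\exp(-2\max_m|I_m|^{1-\tau/4}))$ sits inside every local neighbourhood whenever $|z-z_0|<\exp(-2\max_m|I_m|^{1-\tau/4})$. Apart from that bookkeeping detail your argument is complete.
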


Now we give another formulation of the covering form of (LDT), which is more suitable for the setting of this paper.
\begin{lemma}\label{another-covering-form-LDT}
Assume $x_{0}\in \mathbb{T}^{d}$, $\mathcal{Z}\subset \partial \mathbb{D}$, and $L(z)>\gamma>0$ for $z\in \mathcal{Z}$. Let $\nu$ be as in (LDT) and $a<b$ be integers. Suppose that for each point $m\in [a,b]$ there exists an interval $J_{m}$ such that $m\in J_{m}$ and:\\
$\mathrm{(1)}$ $\mathrm{dist}(m,\partial J_{m})\geq |J_{m}|/100$,\\
$\mathrm{(2)}$ $\mathrm{dist}(\sigma(\mathcal{E}^{\beta,\eta}_{J_{m}}(x_{0})),\mathcal{Z})\geq \exp(-K)$, with $K<\frac{1}{2}\mathop{\min}\limits_{m}|J_{m}|^{\nu/2}$,\\
$\mathrm{(3)}$ $K\geq C(p,q,z,\gamma)$.\\
Let $J=\mathop{\cup}\limits_{m\in [a,b]}J_{m}$. Then for any $|x-x_{0}|<\exp(-2K)$ we have that
\begin{equation*}
\mathrm{dist}(\sigma(\mathcal{E}^{\beta,\eta}_{J}(x)),\mathcal{Z})\geq \frac{1}{2}\exp(-K).
\end{equation*}
\end{lemma}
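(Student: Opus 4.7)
The plan is to reduce to the covering lemma (Lemma \ref{covering-lemma}) applied to the operator $\mathcal{E}^{\beta,\eta}_{J}(x)$. Fix $z\in\mathcal{Z}$, a spectral parameter $z'\in\mathcal{D}(z,\tfrac{1}{2}\exp(-K))$, and any $x\in\mathbb{T}^{d}$ with $|x-x_{0}|<\exp(-2K)$. It suffices to show that $z'\notin\sigma(\mathcal{E}^{\beta,\eta}_{J}(x))$, for then letting $z$ vary in $\mathcal{Z}$ and $z'$ vary in $\mathcal{D}(z,\tfrac{1}{2}\exp(-K))$ yields the claimed distance bound.

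The first step is to transfer the resolvent estimate encoded in hypothesis (2) from $(z,x_{0})$ to the perturbed data $(z',x)$. From (2), $\|(z-\mathcal{E}^{\beta,\eta}_{J_{m}}(x_{0}))^{-1}\|\leq\exp(K)$. Since the CMV entries are analytic and Lipschitz in $x$, one has $\|\mathcal{E}^{\beta,\eta}_{J_{m}}(x)-\mathcal{E}^{\beta,\eta}_{J_{m}}(x_{0})\|\leq C\exp(-2K)$; combined with $|z'-z|<\tfrac{1}{2}\exp(-K)$, a Neumann-series argument (valid under hypothesis (3)) produces $\|(z'-\mathcal{E}^{\beta,\eta}_{J_{m}}(x))^{-1}\|\leq 2\exp(K)$. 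Because $K<\tfrac{1}{2}|J_{m}|^{\nu/2}$, this bound lies inside the admissible window $C\exp(|J_{m}|^{\nu/2})$ of the spectral form of LDT (Lemma \ref{spectrl-form-(LDT)}); applying that lemma to the interval $J_{m}$ at base point $(\omega,z';x)$ gives
\begin{equation*}
\log|\varphi^{\beta,\eta}_{J_{m}}(\omega,z';x)|>|J_{m}|\,L_{|J_{m}|}(\omega,z')-|J_{m}|^{1-\tau/2}.
\end{equation*}
Plugging this into Lemma \ref{Green-function-estimate} at the same base point with zero further perturbation yields, for all $j,k\in J_{m}$,
\begin{equation*}
|G^{\beta,\eta}_{J_{m}}(j,k;z')|\leq\exp\!\Big(-\tfrac{\gamma}{2}|k-j|+|J_{m}|^{1-\tau/2}+2C_{0}|J_{m}|^{1-\tau}\Big).
\end{equation*}

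I now invoke Lemma \ref{covering-lemma} for $\mathcal{E}^{\beta,\eta}_{J}(x)$. For each interior point $m^{*}$ of $J$ I take the covering interval to be $I_{m^{*}}=J_{m}$ for a suitable $m\in[a,b]$: hypothesis (1) delivers this directly for $m^{*}\in[a,b]$, and for the residual boundary points in $J\setminus[a,b]$ I simply pick the $J_{m}$ in which $m^{*}$ sits deepest. Combined with $\mathrm{dist}(m^{*},\partial J_{m})\geq|J_{m}|/100$, the Green's function estimate above gives
\begin{equation*}
|G^{\beta,\eta}_{J_{m}}(a_{m},m^{*};z')|,\;|G^{\beta,\eta}_{J_{m}}(m^{*},b_{m};z')|\leq\exp\!\Big(-\tfrac{\gamma}{200}|J_{m}|+O(|J_{m}|^{1-\tau/2})\Big),
\end{equation*}
which is much smaller than $1$ once $|J_{m}|$ exceeds the threshold built into hypothesis (3). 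Since the boundary prefactors in the covering lemma are universally bounded, Lemma \ref{covering-lemma} concludes $z'\notin\sigma(\mathcal{E}^{\beta,\eta}_{J}(x))$, as required.

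The chief technical obstacle is the alignment of the three competing scales $K$, $|J_{m}|^{\nu/2}$, and $|J_{m}|^{1-\tau/2}$: the Neumann-series upgrade demands that the resolvent growth $\exp(K)$ stay within the window $\exp(|J_{m}|^{\nu/2})$ of Lemma \ref{spectrl-form-(LDT)}, while the $|J_{m}|^{1-\tau/2}$ loss from the subsequent Green's function estimate must be dwarfed by the geometric decay $\tfrac{\gamma}{200}|J_{m}|$. The hypotheses $\nu,\tau\in(0,1)$ together with $K<\tfrac{1}{2}\min_{m}|J_{m}|^{\nu/2}$ produce the interlocking chain $2K<|J_{m}|^{\nu/2}<|J_{m}|^{1-\tau/2}\ll|J_{m}|$, which is exactly what is needed; a secondary bookkeeping point is handling the boundary of $J$ when $J$ properly extends beyond $[a,b]$, but this is routine given the overlap of the $J_{m}$.
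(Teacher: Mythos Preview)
Your proposal is correct and follows essentially the same route as the paper: perturb the spectral gap from $(z,x_{0})$ to $(z',x)$, feed this into the spectral form of (LDT) to get a determinant lower bound on each $J_{m}$, convert via Lemma~\ref{Green-function-estimate} to off-diagonal Green's function decay, and finish with the covering lemma. The only place the paper is more explicit is the boundary bookkeeping you flag as routine: rather than ``pick the $J_{m}$ in which $m^{*}$ sits deepest,'' it defines $m_{-}=\sup\{m:c_{m}=c\}$, $m_{+}=\inf\{m:d_{m}=d\}$ and sets $I_{m}=J_{m_{\mp}}$ on the strips $[c,m_{-}]$ and $[m_{+},d]$, which directly guarantees $\mathrm{dist}(m,J\setminus I_{m})\geq |I_{m}|/100$ (note this is distance to $J\setminus I_{m}$, not to $\partial I_{m}$, so there is no issue at the very endpoints of $J$).
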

\begin{proof}
We only need to consider the case $\mathcal{Z}=\{z_{0}\}$. For the general case, we can apply this particular case to each $z_{0}\in \mathcal{Z}$.

Now we set up some intervals for which we will be able to apply the covering lemma, i.e. Lemma \ref{covering-form-(LDT)}. Let $J_{m}=[c_{m},d_{m}]$. Then we have that
\begin{equation*}
J=[c,d],\quad c=\inf_{m}c_{m},\quad d=\sup_{m}d_{m}.
\end{equation*}
Let $m_{-}=\sup\{m\in[a,b]: c_{m}=c\}$, $m_{+}=\inf\{m\in[a,b]: d_{m}=d\}$,
\begin{equation*}
I_{m}=
\begin{cases}
J_{m_{-}}, & m\in[c,m_{-}]\\
J_{m},& m\in[m_{-},m_{+}]\\
J_{m_{+}}, &m\in [m_{+},d]
\end{cases}.
\end{equation*}
Then $\mathrm{dist}(m, J\backslash I_{m})\geq |I_{m}|/100$.

Take $m\in [c,d]$. Based on (2), for any
\begin{equation*}
|x-x_{0}|<\exp(-2K),\quad |z-z_{0}|\leq\frac{1}{2}\exp(-K),
\end{equation*}
we have that
\begin{align*}
\mathrm{dist}(\sigma(\mathcal{E}^{\beta,\eta}_{I_{m}}(x)),z)&\geq\mathrm{dist}(\sigma(\mathcal{E}^{\beta,\eta}_{I_{m}}(x)),z_{0})-\mathrm{dist}(z,z_{0})\\
&\geq\exp(-K)-\frac{1}{2}\exp(-K)\\
&>\frac{1}{4}\exp(-K)>\exp(-|I_{m}|^{\nu/2}).
\end{align*}
According to the spectral form of (LDT), we have that
\begin{equation*}
\log|\varphi^{\beta,\eta}_{I_{m}}(x)|>|I_{m}|L_{|I_{m}|}(\omega,z)-|I_{m}|^{1-\tau/2}.
\end{equation*}
Applying Lemma \ref{Green-function-estimate}, one can obtain that
\begin{equation*}
|G^{\beta,\eta}_{I_{m}}(m,k;z)|\leq \exp(-\frac{\gamma}{2}|m-k|+\frac{3}{2}|I_{m}|^{1-\tau/2}).
\end{equation*}
By Lemma \ref{covering-lemma}, we know that $z\notin \sigma(\mathcal{E}_{J}^{\beta,\eta}(x))$ for any $|z-z_{0}|\leq\frac{1}{2}\exp(-K)$. Therefore,
\begin{equation*}
\mathrm{dist}(\sigma(\mathcal{E}^{\beta,\eta}_{J}(x)),\mathcal{Z})\geq \frac{1}{2}\exp(-K).
\end{equation*}
\end{proof}

\subsection{Finite scale localization}
\begin{lemma}\label{another-finite-scale-localization}
Let $x_{0}\in \mathbb{T}^{d}$, $z_{0}\in \partial \mathbb{D}$, and $L(z_{0})>\gamma>0$. Let $\nu$ be as in (LDT) and $0<\hat{\beta}<\nu/2$. Let $n\geq N_{0}$ be integers. Suppose that for any $3N_{0}/2<|m|\leq n$ there exists an interval $J_{m}$ such that $m\in J_{m}$, $\mathrm{dist}(m,\partial J_{m})\geq N_{0}-N_{0}^{\frac{1}{2}}$, $|J_{m}|\leq 10 N_{0}$, and
\begin{equation*}
\mathrm{dist}(\sigma(\mathcal{E}_{J_{m}}^{\beta,\eta}(x_{0})),z_{0})\geq \exp(-N_{0}^{\hat{\beta}}).
\end{equation*}
Let $[-n',n'']=[-3N_{0}/2,3N_{0}/2]\cup \mathop{\cup}\limits_{3N_{0}/2<|m|\leq n}J_{m}$. Then the following holds for $N_{0}\geq C(p,q,\hat{\beta})$. If
\begin{equation*}
|x-x_{0}|<\exp(-2N_{0}^{\hat{\beta}}), \quad \big|z_{k}^{[-n',n'']}(\omega,z;x)-z_{0}\big|<\frac{1}{4}\exp(-N_{0}^{\hat{\beta}}),
\end{equation*}
then
$$\big|u_{k}^{[-n',n'']}(\omega,x;s)\big|<\exp(-\frac{\gamma|s|}{20}), \,\, |s|\geq 3N_{0}/4.$$
\end{lemma}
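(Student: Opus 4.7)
The plan is to obtain pointwise exponential decay of $u_k$ by first converting the sub-interval spectral separation into local Green's function estimates on each $J_m$, and then propagating the decay through Poisson's formula via a maximum-principle argument.

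\emph{From spectral separation to Green's function decay.} The hypothesis $\mathrm{dist}(\sigma(\mathcal{E}^{\beta,\eta}_{J_m}(x_0)), z_0) \geq \exp(-N_0^{\hat\beta})$ upgrades to $\mathrm{dist}(\sigma(\mathcal{E}^{\beta,\eta}_{J_m}(x)), z_k) \geq \tfrac12\exp(-N_0^{\hat\beta})$ via the Lipschitz dependence of eigenvalues on $x$ under the perturbation $|x - x_0| < \exp(-2N_0^{\hat\beta})$, together with $|z_k - z_0| < \tfrac14\exp(-N_0^{\hat\beta})$. Since $\hat\beta < \nu/2$ and $|J_m| \geq 2(N_0 - N_0^{1/2}) \geq N_0$, the resolvent bound $\leq 2\exp(N_0^{\hat\beta})$ sits well below the Cartan threshold $\exp(|J_m|^{\nu/2})$, so Lemma \ref{spectrl-form-(LDT)} delivers $\log|\varphi^{\beta,\eta}_{J_m}(x)| > |J_m|L_{|J_m|}(\omega, z_k) - |J_m|^{1-\tau/2}$, and Lemma \ref{Green-function-estimate} produces
$$|G^{\beta,\eta}_{J_m}(j, \ell; z_k)| \leq \exp\bigl(-\tfrac{\gamma}{2}|j - \ell| + C_0 N_0^{1-\tau/2}\bigr) \quad (j, \ell \in J_m).$$

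\emph{Transition-region base estimate.} For $|s| \in [3N_0/4, 3N_0/2]$ (assume $s > 0$; the opposite sign is symmetric) I take $m_0 = \lceil 3N_0/2 \rceil + 1$ and observe that the core $[m_0 - (N_0 - N_0^{1/2}), m_0 + (N_0 - N_0^{1/2})]$ of $J_{m_0}$ contains every such $s$ with $\mathrm{dist}(s, \partial J_{m_0}) \geq N_0/4 - O(N_0^{1/2})$. One application of Poisson's formula on $J_{m_0}$ with the trivial bound $|u_k| \leq 1$ at the boundary yields $|u_k(s)| \leq C\exp(-\gamma N_0/8 + C_0 N_0^{1-\tau/2}) \leq \exp(-\gamma N_0/9)$, which is at most $\exp(-\gamma|s|/20)$ since $\gamma|s|/20 \leq 3\gamma N_0/40 < \gamma N_0/9$.

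\emph{Worst-point argument on $|s| > 3N_0/2$.} Define $q^* := \sup\{\,|u_k(s)|\exp(\gamma|s|/20) : |s| \geq 3N_0/4,\, s \in [-n', n'']\,\}$ and let $s^{**}$ realize the supremum. If $|s^{**}| \leq 3N_0/2$ the transition-region estimate immediately gives $q^* \leq \exp(-13\gamma N_0/360) < 1$. Otherwise I apply Poisson on $J_{s^{**}} = [c_{**}, d_{**}]$ and multiply through by $\exp(\gamma|s^{**}|/20)$; bounding $|u_k(c_{**})|$ by $q^*\exp(-\gamma|c_{**}|/20)$ when $|c_{**}| \geq 3N_0/4$ and by $1$ otherwise (and analogously for the right boundary), the algebraic identity
$$\tfrac{\gamma|s^{**}|}{20} - \tfrac{\gamma(s^{**} - c_{**})}{2} - \tfrac{\gamma|c_{**}|}{20} = -\tfrac{9\gamma(s^{**} - c_{**})}{20}$$
(for $s^{**}$ and $c_{**}$ of the same sign; the sign-flipped case is at least as favourable) produces $q^* \leq q^*\bigl(\exp(-\gamma N_0/3) + \exp(-\gamma N_0/2)\bigr) + \exp(-\gamma N_0/4)$, hence $q^* \leq 2\exp(-\gamma N_0/4) < 1$ for $N_0$ large. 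In every case $|u_k(s)| < \exp(-\gamma|s|/20)$ for $|s| \geq 3N_0/4$.

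\emph{Main obstacle.} The quantitative heart of the argument is the contractivity of the Poisson self-improvement: the gain $9\gamma(s^{**} - c_{**})/20 \geq 9\gamma(N_0 - N_0^{1/2})/20$ per application must dominate the $O(N_0^{1-\tau/2})$ additive correction in the Green's function estimate, the Poisson-coefficient constants, and the residual boundary terms originating from the safe region $|t| < 3N_0/4$. The generous margin between the Green's function decay rate $\gamma/2$ and the target rate $\gamma/20$ makes this absorption possible once $N_0 \geq N_0(p, q, \hat\beta, \gamma)$.
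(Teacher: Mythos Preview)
Your approach is correct but takes a genuinely different route from the paper's. You iterate Poisson's formula on the \emph{individual} intervals $J_m$ and close via a maximum-principle argument on the weighted quantity $q^*=\sup_{|s|\ge 3N_0/4}|u_k(s)|\exp(\gamma|s|/20)$. The paper instead, for each fixed $s$, sets $\hat d=\lceil s-N_0/2\rceil>s/3$ and builds the \emph{single} large interval
\[
J=\bigcup\bigl\{J_m:\ m\in[s-\hat d,\,s+\hat d+N_0]\cap(3N_0/2,n]\bigr\},
\]
arranged so that $\mathrm{dist}(s,[-n',n'']\setminus J)\ge\hat d$. The spectral separation on each $J_m$ is then transferred to the union $J$ via Lemma~\ref{another-covering-form-LDT}, after which one application of the spectral form of (LDT) and one Poisson step on $J$ give $|u_k(s)|\le 8\exp(-\gamma\hat d/2+C|J|^{1-\tau/2})<\exp(-\gamma s/20)$ directly. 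Your argument buys independence from the covering lemma at the price of the self-improvement bookkeeping and some extra care near the endpoints $|s^{**}|>n$ (where you must pick the $J_{m'}$ with $d_{m'}=n''$ so that the outer boundary term drops out); the paper's route is shorter once Lemma~\ref{another-covering-form-LDT} is in hand.
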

\begin{proof}
Take $x, z=z_{k}^{[-n',n'']}(\omega,z;x)$ satisfying the assumptions. Without loss of generality, we assume that $s\geq 3N_{0}/4$. Let $\hat{d}=\lceil s-N_{0}/2\rceil$. Then $\hat{d}>s/3$. Let
$$J=\cup\{J_{m}: m\in[s-\hat{d},s+\hat{d}+N_{0}]\cap(3N_{0}/2,n]\}.$$
Due to $m\in J_{m}$, $\mathrm{dist}(m,\partial J_{m})\geq N_{0}-N_{0}^{\frac{1}{2}}$, we have that $m+[-(N_{0}-N_{0}^{\frac{1}{2}}),N_{0}-N_{0}^{\frac{1}{2}}]\subseteq J_{m}$, $N_{0}<|J|\lesssim \hat{d}$, $s\in J$, and $\mathrm{dist}(s,[-n',n'']\backslash J)\geq \hat{d}$.

According to Lemma \ref{another-covering-form-LDT}, one can obtain that
$$\mathrm{dist}(\sigma(\mathcal{E}_{J}^{\beta,\eta}(x)),z)\geq\frac{1}{4}\exp(-N_{0}^{\hat{\beta}})>\exp(-|J|^{\nu/2}).$$
Moreover, with the aid of the spectral form of (LDT), we have that
$$\log|\varphi_{J}^{\beta,\eta}(x)|>|J|L_{|J|}(\omega,z)-|J|^{1-\tau/2}.$$
Let $J=[j_{1},j_{2}]$. According to Poisson's formula,
\begin{align*}
u_{k}^{[-n',n'']}(s)=&G^{\beta,\eta}_{J}(j_{1}, s;z)
\begin{cases}
(z\overline{\beta}-\alpha_{j_{1}})u(j_{1})-\rho_{j_{1}}u(j_{1}+1), \quad & j_{1}\text{ even,}\\
(z\alpha_{j_{1}}-\beta)u(j_{1})+z\rho_{j_{1}}u(j_{1}+1),& j_{1} \text { odd,}
\end{cases}\\
&+G^{\beta,\eta}_{J}(s,j_{2};z)
\begin{cases}
(z\overline{\eta}-\alpha_{j_{2}})u(j_{2})-\rho_{j_{2}}u(j_{2}-1), \quad & j_{2}\text{ even,}\\
(z\alpha_{j_{2}}-\eta)u(j_{2})+z\rho_{j_{2}-1}u(j_{2}-1),& j_{2} \text { odd}
\end{cases}
\end{align*}
for $j_{1}<s<j_{2}$. Combining with Lemma \ref{Green-function-estimate}, we can derive that
\begin{equation*}
|u_{k}^{[-n',n'']}(s)|\leq 8\exp(-\frac{\gamma}{2}\hat{d}+C|J|^{1-\tau/2})<\exp(-\frac{\gamma}{6}\hat{d})<\exp(-\frac{\gamma}{20}s)
\end{equation*}
(recall that $u$ is normalized).
\end{proof}

In what follows, when we deal with the difference between two vectors with different dimensions, we make the dimension of a low dimensional vector equal to the dimension of a high dimensional vector by adding zero element. For example, if \begin{equation*}
\xi=(x_{-5},x_{-4},\ldots,x_{-1},x_{0},x_{1},\ldots,x_{4},x_{5}),\,\,\upsilon=(y_{-7},y_{-6},\ldots,y_{-1},y_{0},y_{1},\ldots,y_{6},y_{7})
\end{equation*}
and $n>m$, then $$\upsilon-\xi=(y_{-7},y_{-6},y_{-5}-x_{-5},\ldots,y_{-1}-x_{-1},y_{0}-x_{0},y_{1}-x_{1},\ldots,y_{5}-x_{5},y_{6},y_{7}).$$
When the columns of a matrix are greater than the rows of a vector, we deal with the product of them in the same way.

\begin{lemma}\label{finite-scale-lemma-2}
We use the notation and assumptions of Lemma \ref{another-finite-scale-localization}. We further assume that there exist integers $|N_{0}'-N_{0}|<N_{0}^{\frac{1}{2}}$, $|N_{0}''-N_{0}|<N_{0}^{\frac{1}{2}}$, and $k_{0}$, such that the following conditions hold:\\

$\mathrm{(i)}$ $|z_{k_{0}}^{[-N_{0}',N_{0}'']}(\omega,z;x_{0})-z_{0}|<\exp(-2N_{0}^{\hat{\beta}})$,\\

$\mathrm{(ii)}$ $|u_{k_{0}}^{[-N_{0}',N_{0}'']}(\omega,x_{0};-(N_{0}'-i))|, |u_{k_{0}}^{[-N_{0}',N_{0}'']}(\omega,x_{0};N_{0}''-i)|<\exp(-2N_{0}^{\hat{\beta}})$, $i=0,1,2,3$.\\
Then there exist $z_{k}^{[-n',n'']}$, $u_{k}^{[-n',n'']}$ with $[-N_{0}',N_{0}'']\subset [-n',n'']$, such that the following estimates hold for any $|x-x_{0}|<\exp(-2N_{0}^{\hat{\beta}})$, provided $N_{0}\geq C(p,q,\hat{\beta})$:\\

$\mathrm{(1)}$ $\big|z_{k}^{[-n',n'']}(\omega,z;x)-z_{k_{0}}^{[-N_{0}',N_{0}'']}(\omega,z;x)\big|<\exp(-\gamma N_{0}/40)$;\\

$\mathrm{(2)}$ $\big|z_{j}^{[-n',n'']}(\omega,z;x)-z_{k}^{[-n',n'']}(\omega,z;x)\big|>\frac{1}{8}\exp(-N_{0}^{\hat{\beta}})$, $j\neq k$;\\

$\mathrm{(3)}$ $\big|u_{k}^{[-n',n'']}(\omega,x;s)\big|<\exp(-\frac{\gamma |s|}{20})$, $|s|\geq 3N_{0}/4$;\\

$\mathrm{(4)}$ $\big\|u_{k}^{[-n',n'']}(\omega,x;\cdot)-u_{k_{0}}^{[-N_{0}',N_{0}'']}(\omega,x;\cdot)\big\|<\exp(-\gamma N_{0}/40)$.
\end{lemma}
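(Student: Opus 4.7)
The plan is to treat $v := u_{k_0}^{[-N_0',N_0'']}$, extended by zero to $[-n',n'']$ (call this $\hat v$), as an approximate eigenvector of $\mathcal{E}_{[-n',n'']}^{\beta,\eta}$, and then bootstrap. Since $\mathcal{E}$ is five-diagonal and the boundary modifications $\beta,\eta$ are localized within two sites, the residual $(\mathcal{E}_{[-n',n'']}^{\beta,\eta}(x_0) - z_{k_0}^{[-N_0',N_0'']}(x_0))\hat v$ is supported on $O(1)$ sites near $\pm N_0',\pm N_0''$; by hypothesis (ii) its $\ell^2$-norm is at most $C\exp(-2N_0^{\hat\beta})$. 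Lipschitz dependence of $\mathcal{E}^{\beta,\eta}$ on $x$ preserves this bound for $|x-x_0|<\exp(-2N_0^{\hat\beta})$, and unitarity supplies an eigenvalue $z_k^{[-n',n'']}(\omega,z;x)$ within $C\exp(-2N_0^{\hat\beta})$ of $z_{k_0}^{[-N_0',N_0'']}(x_0)$, hence within $\tfrac14\exp(-N_0^{\hat\beta})$ of $z_0$. That places us inside the hypothesis of Lemma \ref{another-finite-scale-localization}, which immediately yields (3).

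Next I would use the freshly obtained decay to run the argument in reverse and sharpen (1), and to get (4). Truncating $u_k^{[-n',n'']}$ to $[-N_0',N_0'']$ gives $\tilde v_k$ with $\|\tilde v_k\| = 1 + O(\exp(-\gamma N_0/20))$ that is an approximate eigenvector of $\mathcal{E}_{[-N_0',N_0'']}^{\beta,\eta}(x)$ with eigenvalue $z_k^{[-n',n'']}(x)$; the defect comes from boundary values of $u_k^{[-n',n'']}$ near $\pm N_0',\pm N_0''$, which by (3) are of order $\exp(-\gamma N_0/20)$. Hence some eigenvalue of $\mathcal{E}_{[-N_0',N_0'']}^{\beta,\eta}(x)$ lies within $\exp(-\gamma N_0/20)$ of $z_k^{[-n',n'']}(x)$; matching it with $z_{k_0}^{[-N_0',N_0'']}$ (and transferring $x\to x_0$ up to the Lipschitz cost $\exp(-2N_0^{\hat\beta})$) yields the sharpened estimate in (1). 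Letting $P^\perp$ be the projection onto the orthogonal complement of $u_{k_0}^{[-N_0',N_0'']}$, a resolvent bound $\|(\mathcal{E}_{[-N_0',N_0'']}^{\beta,\eta} - z_k^{[-n',n'']})^{-1} P^\perp\| \lesssim \exp(N_0^{\hat\beta})$ forces $\|P^\perp \tilde v_k\| < \exp(-\gamma N_0/40)$; combined with $\|u_k^{[-n',n'']} - \tilde v_k\|$ being exponentially small, this gives (4) after adjusting the overall phase.

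For (2), suppose for contradiction that a distinct eigenvalue $z_j^{[-n',n'']}(x)$ lies within $\tfrac18\exp(-N_0^{\hat\beta})$ of $z_k^{[-n',n'']}(x)$. Then $z_j^{[-n',n'']}(x)$ also lies within $\tfrac12\exp(-N_0^{\hat\beta})$ of $z_0$, so Lemma \ref{another-finite-scale-localization} forces exponential decay of $u_j^{[-n',n'']}$ as well. The truncations $\tilde v_j,\tilde v_k$ to $[-N_0',N_0'']$ are nearly orthogonal (the originals are orthogonal and both concentrated on $[-N_0',N_0'']$) and both serve as approximate eigenvectors of $\mathcal{E}_{[-N_0',N_0'']}^{\beta,\eta}(x)$ with eigenvalues in a $\tfrac14\exp(-N_0^{\hat\beta})$-ball around $z_0$. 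Hence the spectral projection of $\mathcal{E}_{[-N_0',N_0'']}^{\beta,\eta}(x)$ onto that ball would have rank at least $2$, contradicting the uniqueness of $z_{k_0}^{[-N_0',N_0'']}$ as the lone near-resonant eigenvalue at this scale.

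The principal obstacle is justifying precisely this uniqueness, which is not stated among the hypotheses but must be deduced from (i)–(ii) together with the ambient separation $\mathrm{dist}(\sigma(\mathcal{E}_{J_m}^{\beta,\eta}(x_0)),z_0)\geq \exp(-N_0^{\hat\beta})$ inherited from Lemma \ref{another-finite-scale-localization}. A second near-resonant eigenvalue of $\mathcal{E}_{[-N_0',N_0'']}^{\beta,\eta}(x_0)$ would, after the same extend-by-zero plus Green's function argument as in Step~1, manufacture an approximate eigenvector of some $\mathcal{E}_{J_m}^{\beta,\eta}(x_0)$ near $z_0$, violating that separation. The delicate part is the careful bookkeeping among the three scales---the $J_m$ of size $\sim N_0$, the interval $[-N_0',N_0'']$, and the outer $[-n',n'']$---and tracking how the $O(\exp(-\gamma N_0/20))$ truncation defects propagate; elsewhere, the argument follows the standard perturbation pattern used for Schr\"odinger operators in \cite{GSV19-Inventiones}, adapted to the five-diagonal CMV structure via Poisson's formula and the estimates of Lemmas \ref{Green-function-estimate}–\ref{covering-form-(LDT)}.
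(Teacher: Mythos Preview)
Your approach tracks the paper's almost exactly: extend $u_{k_0}$ by zero to get an approximate eigenvector of $\mathcal{E}_{[-n',n'']}^{\beta,\eta}$, locate $z_k$ via Corollary~\ref{eigenvector}(a), invoke Lemma~\ref{another-finite-scale-localization} for (3), then truncate $u_k$ back to $[-N_0',N_0'']$ and use Corollary~\ref{eigenvector}(b) to sharpen (1) and obtain (4). For (2) the paper assumes a second close eigenvalue $z_j$, applies Lemma~\ref{another-finite-scale-localization} to $u_j$, runs the same truncation argument to get $\|u_j-u_{k_0}\|<\exp(-\gamma N_0/40)$, and concludes $\|u_j-u_k\|<1$, contradicting orthogonality; this is your rank-$2$ projection argument in different clothes.

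You are right that the one-dimensionality hypothesis of Corollary~\ref{eigenvector}(b)---i.e., that $z_{k_0}$ is the unique eigenvalue of $\mathcal{E}_{[-N_0',N_0'']}^{\beta,\eta}$ within $c\exp(-N_0^{\hat\beta})$ of $z_0$---is needed and is not among the stated hypotheses; the paper simply invokes part (b) without checking it. However, your proposed derivation of this uniqueness from the $J_m$ separation does not work: the intervals $J_m$ are centered at $|m|>3N_0/2$ and carry essentially none of the mass of a vector supported on $[-N_0',N_0'']$, so a hypothetical second near-resonant eigenvector of $\mathcal{E}_{[-N_0',N_0'']}^{\beta,\eta}$ cannot be transported into an approximate eigenvector of any $\mathcal{E}_{J_m}^{\beta,\eta}$. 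In the paper's actual application (Lemma~\ref{Lemma4.4}) the missing separation is supplied directly by the inductive hypothesis~\eqref{(A)-(2)}, which gives $|z_j^{[-N_0',N_0'']}-z|>\exp(-N_0^{\hat\delta})\gg\exp(-N_0^{\hat\beta})$ for $j\neq k_0$ and is stable under $|x-x_0|<\exp(-2N_0^{\hat\beta})$; you should treat it as an unstated extra hypothesis of the lemma rather than something to be derived from (i)--(ii) and the $J_m$ condition.
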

\begin{proof}
Due to the condition (ii),
$$\big\|(\mathcal{E}_{[-n',n'']}^{\beta,\eta}(\omega,x_{0})-z_{k_{0}}^{[-N_{0}',N_{0}'']}(\omega,x_{0}))u_{k_{0}}^{[-N_{0}',N_{0}'']}(\omega,x_{0})\big\|
<8\exp(-2N_{0}^{\hat{\beta}}).$$
Applying part (a) of Corollary \ref{eigenvector}, one can obtain that there exists $z_{k}^{[-n',n'']}(\omega,x_{0})$ such that
$$\big|z_{k}^{[-n',n'']}(\omega,x_{0})-z_{k_{0}}^{[-N_{0}',N_{0}'']}(\omega,x_{0})\big|<8\sqrt{2}\exp(-2N_{0}^{\hat{\beta}}).$$
Then for $|x-x_{0}|<\exp(-2N_{0}^{\hat{\beta}})$, we have that
\begin{align*}
&\big|z_{k}^{[-n',n'']}(\omega,x)-z_{k_{0}}^{[-N_{0}',N_{0}'']}(\omega,x)\big|\\
&\leq \big|z_{k}^{[-n',n'']}(\omega,x)-z_{k}^{[-n',n'']}(\omega,x_{0})\big|+\big|z_{k}^{[-n',n'']}(\omega,x_{0})-z_{k_{0}}^{[-N_{0}',N_{0}'']}(\omega,x_{0})\big|\\
&\quad +\big|z_{k_{0}}^{[-N_{0}',N_{0}'']}(\omega,x_{0})-z_{k_{0}}^{[-N_{0}',N_{0}'']}(\omega,x)\big|\\
&\ll \exp(-N_{0}^{\hat{\beta}})
\end{align*}
and
$$\big|z_{k}^{[-n',n'']}(\omega,x)-z_{0}\big|\leq\big|z_{k}^{[-n',n'']}(\omega,x)-z_{k_{0}}^{[-N_{0}',N_{0}'']}(\omega,x)\big|
+\big|z_{k_{0}}^{[-N_{0}',N_{0}'']}(\omega,x)-z_{0}\big|\ll\exp(-N_{0}^{\hat{\beta}}).$$
Therefore, the statement (3) follows with the aid of Lemma \ref{another-finite-scale-localization}.

Then one can obtain that
\begin{align*}
&\big\|(\mathcal{E}_{[-N_{0}',N_{0}'']}^{\beta,\eta}(\omega,x)-z_{k}^{[-n',n'']}(\omega,x))u_{k}^{[-n',n'']}(\omega,x;\cdot)\big\|\\
&=\big\|(\mathcal{E}_{[-N_{0}',N_{0}'']}^{\beta,\eta}(\omega,x)-\mathcal{E}_{[-n',n'']}^{\beta,\eta}(\omega,x))u_{k}^{[-n',n'']}(\omega,x;\cdot)\big\|\\
&\lesssim \exp(-\gamma (N_{0}-N_{0}^{\frac{1}{2}})/20).
\end{align*}
According to the part (b) of Corollary \ref{eigenvector}, with $\eta=c\exp(-N_{0}^{\hat{\beta}})$, $c\ll 1$, statements (1) and (4) hold. Now we prove the statement (2). Assume that there exist $j\neq k$ and $x$ such that
$$\big|z_{j}^{[-n',n'']}(\omega,z;x)-z_{k}^{[-n',n'']}(\omega,z;x)\big|\leq\frac{1}{8}\exp(-N_{0}^{\hat{\beta}}).$$
Thus, we get that
\begin{align*}
&\big|z_{j}^{[-n',n'']}(\omega,z;x)-z_{k_{0}}^{[-N_{0}',N_{0}'']}(\omega,z;x)\big|\\
&\leq\big|z_{j}^{[-n',n'']}(\omega,z;x)-z_{k}^{[-n',n'']}(\omega,z;x)\big|+\big|z_{k}^{[-n',n'']}(\omega,z;x)-z_{k_{0}}^{[-N_{0}',N_{0}'']}(\omega,z;x)\big|\\
&\leq\frac{1}{8}\exp(-N_{0}^{\hat{\beta}})+\exp(-\gamma N_{0}/40)<\frac{1}{4}\exp(-N_{0}^{\hat{\beta}}).
\end{align*}
As a consequence, we have that
\begin{align*}
\big|z_{j}^{[-n',n'']}(\omega,z;x)-z_{0}\big|&\leq \big|z_{j}^{[-n',n'']}(\omega,z;x)-z_{k_{0}}^{[-N_{0}',N_{0}'']}(\omega,z;x)\big|+\big|z_{k_{0}}^{[-N_{0}',N_{0}'']}(\omega,z;x)-z_{0}\big|\\
&\leq\frac{1}{8}\exp(-N_{0}^{\hat{\beta}})+\exp(-\gamma N_{0}/40)+\exp(-2N_{0}^{\hat{\beta}})<\frac{1}{4}\exp(-N_{0}^{\hat{\beta}}).
\end{align*}
Using Lemma \ref{another-finite-scale-localization}, it follows that
$$\big|u_{j}^{[-n',n'']}(\omega,x;s)\big|<\exp(-\frac{\gamma|s|}{20}),\quad |s|\geq 3N_{0}/4.$$
Just as above we can obtain that
$$\big\|u_{j}^{[-n',n'']}(\omega,x;\cdot)-u_{k_{0}}^{[-N_{0}',N_{0}'']}(\omega,x;\cdot)\big\|<\exp(-\gamma N_{0}/40).$$
Thus,
\begin{align*}
&\big\|u_{j}^{[-n',n'']}(\omega,x;\cdot)-u_{k}^{[-n',n'']}(\omega,x;\cdot)\big\|\\
&\leq\big\|u_{j}^{[-n',n'']}(\omega,x;\cdot)-u_{k_{0}}^{[-N_{0}',N_{0}'']}(\omega,x;\cdot)\big\|+\big\|u_{k_{0}}^{[-N_{0}',N_{0}'']}(\omega,x;\cdot)-u_{k}^{[-n',n'']}(\omega,x;\cdot)\big\|\\
&<2\exp(-\gamma N_{0}/40)<1.
\end{align*}
On the other hand, $u_{j}^{[-n',n'']}(\omega,x;\cdot)$, $u_{k}^{[-n',n'']}(\omega,x;\cdot)$ are normalized eigenvectors with different eigenvalues and
$$\big\|u_{j}^{[-n',n'']}(\omega,x;\cdot)-u_{k}^{[-n',n'']}(\omega,x;\cdot)\big\|^{2}=2.$$
This makes a contradiction. Then the statement (2) holds.
\end{proof}

\subsection{Semialgebraic sets}
In this section, semialgebraic sets will be introduced by approximating the Verblunsky coefficients $\alpha_{i}$ with a polynomial $\tilde{\alpha}_{i}$. More precisely, given $n\geq 1$, by truncating $\alpha_{i}$'s Fourier series and the Taylor series of the trigonometric functions, one can obtain a polynomial $\tilde{\alpha}_{i}$ of degree less than $ n^{4}$ such that
\begin{equation}\label{VCA}
\|\alpha_{i}-\tilde{\alpha}_{i}\|_{\infty}\lesssim \exp(-n^{2}).
\end{equation}
Let $\tilde{\mathcal{E}}_{[0,n-1]}^{\beta,\eta}$ be the matrix with this truncated Verblunsky coefficient and the refined boundary condition $\beta, \eta\in \partial\mathbb{D}$. Let $z_{j}^{[0,n-1]}(x,\omega)$ and $\tilde{z}_{j}^{[0,n-1]}(x,\omega)$ be the eigenvalues of $\mathcal{E}_{[0,n-1]}^{\beta,\eta}$ and $\tilde{\mathcal{E}}_{[0,n-1]}^{\beta,\eta}$, respectively. Then we have that
\begin{equation}\label{eigenvalue-estimate}
|z_{j}^{[0,n-1]}(x,\omega)-\tilde{z}_{j}^{[0,n-1]}(x,\omega)|\leq \|\mathcal{E}_{[0,n-1]}^{\beta,\eta}(x,\omega)-\tilde{\mathcal{E}}_{[0,n-1]}^{\beta,\eta}(x,\omega)\|\lesssim 8\|\alpha_{i}-\tilde{\alpha}_{i}\|_{\infty}.
\end{equation}


\begin{definition 2}\cite[Definition 9.1]{Bourgain-book}\label{semialgebraic-set}
A set $\mathcal{S}\subset \mathbb{R}^{n}$ is called semialgebraic if it is a finite union of sets defined by a finite number of polynomial equalities and inequalities. More precisely, let $\mathcal{P}=\{P_{1},\ldots,P_{s}\}\subset R[X_{1},\ldots,X_{n}]$ be a family of real polynomials whose degrees are bounded by $d$. A (closed) semialgebraic set $\mathcal{S}$ is given by an expression
\begin{equation}\label{semialgebraic(1)}
\mathcal{S}=\bigcup\limits_{j}\bigcap\limits_{l\in \mathcal{L}_{j}}\{R^{n}|P_{l}s_{jl}0\},
\end{equation}
where $\mathcal{L}_{j}\subset\{1,\ldots,s\}$ and $s_{jl}\in \{\geq,\leq,=\}$ are arbitrary. We say that $\mathcal{S}$ has degree at most $sd$, and its degree is the infimum of $sd$ over all representations as in \eqref{semialgebraic(1)}.
\end{definition 2}

\section{Inductive scheme for the bulk of the spectrum}
Let $\nu\ll\tau\ll 1$ be as in (LDT). Set $\hat{\delta}=(\nu')^{C_{0}}$, $\hat{\beta}=(\nu')^{C_{1}}$, $\hat{\mu}=(\nu')^{C_{2}}$ with $0<\nu'\leq \nu$ and $C_{0}, C_{1}, C_{2}>1$, satisfying the relations:
$$C_{1}+1<C_{2}<C_{0}<2C_{1}.$$
This implies that
\begin{equation}\label{exponent-relation}
\hat{\beta}^{2}\ll\hat{\delta}\ll\hat{\mu}\ll\hat{\beta}\nu\ll\hat{\beta}\ll\nu
\end{equation}
with the constants implied by ``$\ll$" being as large as we wish. Here $\nu'\leq c(C_{0},C_{1},C_{2})\nu$ is small enough.

Given an integer $s\geq 0$, let
$$z_{s}\in \partial\mathbb{D},\,\,N_{s}\in \mathbb{N},\,\,r_{s}:=\exp(-N_{s}^{\hat{\delta}}).$$
The inductive conditions are as follows:\\
(A) There exists integers $|N_{s}'-N_{s}|, |N_{s}''-N_{s}|<N_{s}^{\frac{1}{2}}$, a map $x_{s}: \Pi_{s}\rightarrow \mathbb{R}^{d}$,
$$\Pi_{s}=\mathcal{I}_{s}\times(\mathcal{D}(z_{s},r_{s})\cap\partial \mathbb{D}),\,\,\mathcal{I}_{s}=\phi_{s}+(-r_{s},r_{s})^{d-1},$$
with $\phi_{s}\in\mathbb{R}^{d-1}$, and $k_{s}$ such that for any $(\phi,z)\in \Pi_{s}$ we have that
\begin{align}
&z_{k_{s}}^{[-N_{s}',N_{s}'']}(x_{s}(\phi,z))=z,\label{(A)-(1)}\\
&\big|z_{j}^{[-N_{s}',N_{s}'']}(x_{s}(\phi,z))-z\big|>\exp(-N_{s}^{\hat{\delta}}),\,\,j\neq k_{s},\label{(A)-(2)}
\end{align}
and $x_{s}(\Pi_{s})\subset \mathbb{T}_{h/2}^{d}$.

To simplify notation we suppress $k_{s}$ and use $z^{[-N_{s}',N_{s}'']}$, $u^{[-N_{s}',N_{s}'']}$ instead.\\
(B) For each $(\phi,z)\in \Pi_{s}$,
\begin{equation}\label{(B)}
\big|u^{[-N_{s}',N_{s}'']}(x_{s}(\phi,z),n)\big|\leq\exp(-\gamma|n|/10),\,\,|n|\geq N_{s}/4.
\end{equation}
(C) Define
\begin{equation}\label{(C)}
\Upsilon_{s}=\{n\omega: 0\leq |n|\leq 3N_{s}/2\}.
\end{equation}
Take an arbitrary $\hat{h}\in \mathbb{T}^{d}$ with $\mathrm{dist}(\hat{h},\Upsilon_{s})\geq \exp(-N_{s}^{\hat{\mu}})$. Then for any $z\in \mathcal{D}(z_{s},r_{s})\cap \partial \mathbb{D}$, we get that
\begin{equation*}
\mathrm{mes}\Big\{\phi\in \mathcal{I}_{s}: \mathop{\max}\limits_{|n'|,|n''|<N_{s}^{\frac{1}{2}}}\mathrm{dist}(\sigma(\mathcal{E}_{[-N_{s}+n',N_{s}+n'']}^{\beta,\eta}(x_{s}(\phi,z)+\hat{h})),z)
<\exp(-N_{s}^{\hat{\beta}}/2)\Big\}<\exp(-N_{s}^{2\hat{\delta}}).
\end{equation*}
(D) Take an arbitrary unit vector $h_{0}\in\mathbb{C}^{d}$. Then for any $z\in \mathcal{D}(z_{s},r_{s})\cap \partial \mathbb{D}$, we have that
\begin{equation*}
\mathrm{mes}\{\phi\in \mathcal{I}_{s}: \log|\langle\nabla z^{[-N_{s}',N_{s}'']}(x_{s}(\phi,z)),h_{0}\rangle|<-N_{s}^{\hat{\mu}}/2\}<\exp(-N_{s}^{2\hat{\delta}}).
\end{equation*}

\begin{theorem}\label{bulk-theorem}
Assume the notation of the inductive conditions. Let $z_{0}\in\partial{\mathbb{D}}$, $L(z)>\gamma>0$ for $z\in \mathcal{D}(z_{0},r_{0})\cap \partial{\mathbb{D}}$. Let $N_{0}\geq 1$, $N_{s}=\lfloor N_{s-1}^{\hat{A}}\rfloor$, $\hat{A}=\hat{\beta}^{-1}$, $s\geq 1$. If $N_{0}\geq C(p,q,\hat{\beta})$ and conditions (A)-(D) hold with $s=0$, then for any $s\geq 1$ and $z_{s}\in \mathcal{D}(z_{s-1},r_{s-1})\cap \partial{\mathbb{D}}$ the conditions (A)-(D) also hold with $\mathcal{I}_{s}\Subset \mathcal{I}_{s-1}$. Furthermore, for any $(\phi,z)\in \Pi_{s}$,
\begin{align}
|x_{s}(\phi,z)&-x_{s-1}(\phi,z)|<\exp(-\gamma N_{s-1}/50),\label{bulk-theorem-(1)}\\
\|u^{[-N_{s}',N_{s}'']}(x_{s}(\phi,z),\cdot)&-u^{[-N_{s}',N_{s}'']}(x_{s-1}(\phi,z),\cdot)\|<\exp(-\gamma N_{s-1}/500).\label{bulk-theorem-(2)}
\end{align}
\end{theorem}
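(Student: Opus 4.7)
The proof is by induction on $s$. The base case $s=0$ is the hypothesis of the theorem, so we assume (A)--(D) hold at step $s-1$ and derive them at step $s$ with $N_s=\lfloor N_{s-1}^{\hat A}\rfloor$. The engine of the induction step is Lemma \ref{finite-scale-lemma-2}, which upgrades an eigenvalue/eigenvector pair on a box of scale $N_{s-1}$ to one on a box of scale $N_s$, provided its covering hypothesis (via Lemma \ref{another-finite-scale-localization}) and its boundary--decay hypothesis are verified.

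\emph{Building the covering.} For each integer $m$ with $3N_{s-1}/2<|m|\le N_s$, take $J_m:=m+[-N_{s-1},N_{s-1}]$. Its restricted operator is a shift of a centered scale-$N_{s-1}$ operator by $m\omega$, so condition (C) at step $s-1$ applies with $\hat h=m\omega$. The Diophantine hypothesis \eqref{standard-DC} guarantees $\mathrm{dist}(m\omega,\Upsilon_{s-1})\gtrsim N_s^{-q}\gg\exp(-N_{s-1}^{\hat\mu})$ for all relevant $m$. Union--bounding over the $O(N_s)$ choices of $m$, the exceptional set of phases has measure at most $N_s\exp(-N_{s-1}^{2\hat\delta})\ll (2r_s)^{d-1}$, so the covering exists on a suitable sub-cube $\mathcal{I}_s\Subset\mathcal{I}_{s-1}$. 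Hypotheses (i) and (ii) of Lemma \ref{finite-scale-lemma-2} come directly from (A) and (B) at step $s-1$, respectively, and its conclusions (1)--(4) then provide an eigenvalue $z^{[-N_s',N_s'']}$ close to $z^{[-N_{s-1}',N_{s-1}'']}$ (yielding \eqref{bulk-theorem-(1)} and \eqref{bulk-theorem-(2)}), spectral isolation by $\ge\tfrac18\exp(-N_{s-1}^{\hat\beta})\gg\exp(-N_s^{\hat\delta})$, and exponential decay of its eigenvector---which together furnish (A) and (B) at step $s$. The map $x_s(\phi,z)$ is obtained by the implicit function theorem applied to $z^{[-N_s',N_s'']}(x)=z$, using (D) at step $s-1$ for transversality.

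\emph{Propagating (D) and proving (C).} Condition (D) at step $s$ is obtained by combining (D) at step $s-1$ with Cauchy integral estimates for $\nabla z^{[-N_s',N_s'']}-\nabla z^{[-N_{s-1}',N_{s-1}'']}$ on a disk of radius $\sim\exp(-\gamma N_{s-1}/50)$, together with a Weierstrass preparation argument applied to the polynomial approximation $\tilde\alpha$ from \eqref{VCA} and the semialgebraic machinery of Section 3.6, which sharpens the gradient bound from scale $N_{s-1}^{\hat\mu}$ to scale $N_s^{\hat\mu}$. The principal obstacle is condition (C) at step $s$: for an arbitrary $\hat h$ with $\mathrm{dist}(\hat h,\Upsilon_s)\ge\exp(-N_s^{\hat\mu})$ and $z\in\mathcal{D}(z_s,r_s)\cap\partial\mathbb{D}$, one must show that for $\phi$ outside a set of measure $<\exp(-N_s^{2\hat\delta})$, every eigenvalue of $\mathcal{E}^{\beta,\eta}_{[-N_s+n',N_s+n'']}(x_s(\phi,z)+\hat h)$ remains at distance $\ge\exp(-N_s^{\hat\beta}/2)$ from $z$. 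The plan is to parametrize each such eigenvalue via an implicit function argument on the shifted box, invoke (D) at step $s$ to conclude that it varies in $\phi$ at rate $\gtrsim\exp(-N_s^{\hat\mu}/2)$, and bound its sub-level set by replacing $\alpha$ with a semialgebraic approximant and applying Cartan's estimate (Lemma \ref{Lemma3.23}) together with Lemma \ref{Cartan-measure} to the resulting analytic defining function. The hierarchy \eqref{exponent-relation} is calibrated precisely so that the gradient lower bound dominates both the polynomial approximation error from \eqref{VCA} and the Diophantine slack controlling $\mathrm{dist}(\hat h,\Upsilon_s)$, yielding the claimed bound on the exceptional set.
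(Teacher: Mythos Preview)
Your overall architecture (induction via condition (C) to build a covering, then Lemma~\ref{finite-scale-lemma-2}) matches the paper, but two steps contain genuine gaps.

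\textbf{Construction of $x_s$.} You write that $x_s(\phi,z)$ comes from the implicit function theorem applied to $z^{[-N_s',N_s'']}(x)=z$ with (D) supplying transversality. This is one real equation in $d$ real unknowns, so no implicit function theorem gives a map $\Pi_s\to\mathbb{R}^d$ this way. The paper's mechanism (Lemma~\ref{Lemma4.6}) is different: one solves the \emph{scalar} equation $z^{[-N_s',N_s'']}(x_{s-1}(\phi,\eta))=z$ in the variable $\eta$, using that $\partial_\eta z^{[-N_s',N_s'']}(x_{s-1}(\phi,\eta))$ is close to $1$ by Lemma~\ref{Lemma4.5}; then one sets $x_s(\phi,z):=x_{s-1}(\phi,\eta(\phi,z))$. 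The transversality here comes from condition~(A) at the previous step (namely $z^{[-N_{s-1}',N_{s-1}'']}(x_{s-1}(\phi,z))=z$ and the Cauchy estimate \eqref{Lem4.5-(2)}), \emph{not} from (D).

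\textbf{Condition (C) at step $s$.} Your plan is to parametrize the eigenvalues of the shifted operator and use (D) at step $s$ to bound their sub-level sets. But (D) controls $\nabla z^{[-N_s',N_s'']}(x_s(\phi,z))$, which concerns the \emph{unshifted} eigenvalue; it says nothing directly about the spectrum of $\mathcal{E}^{\beta,\eta}_{[-N_s+n',N_s+n'']}(x_s(\phi,z)+\hat h)$. The paper's proof of (C) (Lemma~\ref{Lemma4.8}) proceeds by a case split on $\hat h$. Writing $\hat h=m_1\omega+h_1$ with $m_1\omega\in\Upsilon_s$ nearest: for sub-intervals $J_m$ with $|m+m_1|>3N_{s-1}/2$ one uses condition (C) at step $s-1$ and the spectral and covering forms of LDT, applying Cartan's estimate to the determinants $\varphi_{J_m}^{\beta,\eta}$; for the near-resonant window $|m+m_1|\le 3N_{s-1}/2$ one invokes Lemma~\ref{Lemma4.7}, which uses condition (D) at step $s-1$ to lower-bound $|z^{[-N_{s-1}',N_{s-1}'']}(x_{s-1}(\phi,z)+h_1)-z|$. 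Neither branch uses (D) at the current step, semialgebraic approximation, or an implicit-function parametrization of shifted eigenvalues. Similarly, the proof of (D) at step $s$ (Lemma~\ref{Lemma4.11}) uses only the gradient comparison of Lemma~\ref{Lemma4.10} and Cartan's estimate---no Weierstrass preparation or semialgebraic machinery is involved.
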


In what follows, we let $B_{0,z,\hat{h}}$ be the set from the measure estimate in condition (C), with $s=0$.

\begin{lemma}\label{Lemma4.2}
Let $\hat{h}$ as in (C), with $s=0$. Let
\begin{equation*}
\mathcal{B}_{0,z,\hat{h}}'=\{\phi\in \mathcal{I}_{0}: \mathop{\max}\limits_{|n'|,|n''|<N_{0}^{\frac{1}{2}}}\mathrm{dist}(\sigma(\mathcal{E}_{[-N_{0}+n',N_{0}+n'']}^{\beta,\eta}(x_{0}(\phi,z)+\hat{h})),z)
<\exp(-N_{0}^{\hat{\beta}})\big\}.
\end{equation*}
Then for any $z\in\mathcal{D}(z_{0},r_{0})\cap \partial{\mathbb{D}}$, the set $\mathcal{B}_{0,z,\hat{h}}'$ is contained in a semialgebraic set of degree less than $N_{0}^{20}$ and with measure less than $\exp(-N_{0}^{2\hat{\delta}})$.
\end{lemma}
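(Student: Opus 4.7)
The measure bound is immediate. The threshold $\exp(-N_{0}^{\hat{\beta}})$ defining $\mathcal{B}_{0,z,\hat{h}}'$ is strictly smaller than the threshold $\exp(-N_{0}^{\hat{\beta}}/2)$ appearing in condition (C) with $s=0$, so $\mathcal{B}_{0,z,\hat{h}}' \subseteq B_{0,z,\hat{h}}$ and $\mathrm{mes}(\mathcal{B}_{0,z,\hat{h}}') \leq \mathrm{mes}(B_{0,z,\hat{h}}) < \exp(-N_{0}^{2\hat{\delta}})$.

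For the semialgebraic inclusion, the strategy is to replace every analytic object entering the definition of $\mathcal{B}_{0,z,\hat{h}}'$ by a polynomial approximation whose error is much smaller than the threshold $\exp(-N_{0}^{\hat{\beta}})$, and then to rewrite the resulting condition in terms of polynomial equalities and inequalities. Two approximations are needed. First, truncate the Fourier series of $\alpha$ and the Taylor series of $\sin, \cos$ to produce, for each relevant $i$, a trigonometric polynomial $\tilde{\alpha}_{i}$ of degree less than $N_{0}^{4}$ with $\|\alpha_{i} - \tilde{\alpha}_{i}\|_{\infty} \lesssim \exp(-N_{0}^{2})$, exactly as in \eqref{VCA}. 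Second, approximate $x_{0}(\phi,z)$, which is analytic on the polydisc $\Pi_{0}$ of polyradius $r_{0} = \exp(-N_{0}^{\hat{\delta}})$ by condition (A), by a polynomial $\tilde{x}_{0}$ in the real coordinates of $(\phi,z)$ of degree bounded by some fixed power of $N_{0}$; Cauchy estimates on $\Pi_{0}$ give accuracy $\ll \exp(-N_{0}^{\hat{\beta}})$, which is again far below the threshold.

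Substituting these approximations yields, for each pair $(n',n'')$ with $|n'|,|n''| < N_{0}^{1/2}$, a matrix $\tilde{\mathcal{E}}_{[-N_{0}+n',N_{0}+n'']}^{\beta,\eta}(\tilde{x}_{0}(\phi,z)+\hat{h})$ of size $\lesssim 2N_{0}+1$ whose entries are polynomials of degree bounded by some power of $N_{0}$ in the real coordinates of $(\phi, z)$. The eigenvalue perturbation estimate \eqref{eigenvalue-estimate}, combined with the approximation error for $x_{0}$, implies that membership of $\phi$ in $\mathcal{B}_{0,z,\hat{h}}'$ forces
\begin{equation*}
\mathrm{dist}\bigl(\sigma(\tilde{\mathcal{E}}_{[-N_{0}+n',N_{0}+n'']}^{\beta,\eta}(\tilde{x}_{0}(\phi,z)+\hat{h})),\,z\bigr) < 2\exp(-N_{0}^{\hat{\beta}})
\end{equation*}
for some admissible pair $(n',n'')$. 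This last condition is equivalent to the existence of $w = u+iv$ with $u^{2}+v^{2} = 1$ and $|w-z|^{2} < 4\exp(-2N_{0}^{\hat{\beta}})$ such that $\det(wI - \tilde{\mathcal{E}}_{[-N_{0}+n',N_{0}+n'']}^{\beta,\eta}(\tilde{x}_{0}(\phi,z)+\hat{h})) = 0$, which is a system of polynomial (in)equalities in $(\phi, z, u, v)$. Quantifier elimination over $(u,v)$ produces a semialgebraic set in $(\phi,z)$, and taking the union over the $\lesssim N_{0}$ pairs $(n', n'')$ gives a semialgebraic set containing $\mathcal{B}_{0,z,\hat{h}}'$.

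A routine count of the contributions to the semialgebraic degree from the entry degrees ($\lesssim N_{0}^{4+o(1)}$), the matrix size ($\lesssim 2N_{0}+1$), the union over pairs ($\lesssim N_{0}$), and the cost of quantifier elimination, gives a total degree bounded by some fixed power $N_{0}^{c}$ with $c$ well below $20$. The main delicate point I anticipate is the precise encoding of the spectral-distance condition as a semialgebraic set and the control of the degree increase under quantifier elimination; everything else reduces to invoking \eqref{VCA}, \eqref{eigenvalue-estimate} and a direct Cauchy-estimate argument for $x_{0}$. The large slack between the natural estimate $N_{0}^{c}$ and the claimed bound $N_{0}^{20}$ should leave substantial room for any losses in this bookkeeping.
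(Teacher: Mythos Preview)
Your outline is sound and close to the paper's argument, but there is one genuine gap and one methodological difference worth noting.

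\textbf{The gap.} The lemma asks for a \emph{single} semialgebraic set $\tilde{\mathcal{B}}$ with $\mathcal{B}_{0,z,\hat{h}}'\subset\tilde{\mathcal{B}}$, $\deg(\tilde{\mathcal{B}})<N_0^{20}$, and $\mathrm{mes}(\tilde{\mathcal{B}})<\exp(-N_0^{2\hat{\delta}})$. Your first paragraph bounds $\mathrm{mes}(\mathcal{B}_{0,z,\hat{h}}')$ via the inclusion $\mathcal{B}_{0,z,\hat{h}}'\subset B_{0,z,\hat{h}}$, while your second and third paragraphs construct a semialgebraic $\tilde{\mathcal{B}}\supset\mathcal{B}_{0,z,\hat{h}}'$; but you never bound $\mathrm{mes}(\tilde{\mathcal{B}})$. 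What you need is the full sandwich $\mathcal{B}_{0,z,\hat{h}}'\subset\tilde{\mathcal{B}}\subset B_{0,z,\hat{h}}$. This is easy once you choose the intermediate threshold carefully: since your approximation errors are $\ll\exp(-N_0^{\hat{\beta}})$, the condition ``approximated spectral distance $<2\exp(-N_0^{\hat{\beta}})$'' forces ``original spectral distance $<\exp(-N_0^{\hat{\beta}}/2)$'', giving $\tilde{\mathcal{B}}\subset B_{0,z,\hat{h}}$. The paper does exactly this, using the intermediate threshold $\exp(-3N_0^{\hat{\beta}}/4)$.

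\textbf{The methodological difference.} You encode the spectral-distance condition as ``$\exists\,w\in\partial\mathbb{D}$ with $|w-z|$ small and $\det(w-\tilde{\mathcal{E}})=0$'' and invoke Tarski--Seidenberg to eliminate $w$. This works, and the effective degree bounds for projecting out two real variables keep you well under $N_0^{20}$. The paper instead avoids quantifier elimination entirely by replacing the spectral-distance condition with a Hilbert--Schmidt resolvent bound: it defines
\[
\tilde{\mathcal{B}}_{0,z,\hat{h}}=\Bigl\{\phi:\min_{|n'|,|n''|<N_0^{1/2}}\bigl\|(\tilde{\mathcal{E}}_{[-N_0+n',N_0+n'']}^{\beta,\eta}(\tilde{x}_0(\phi,z)+\hat{h})-z)^{-1}\bigr\|_{\mathrm{HS}}>\exp(3N_0^{\hat{\beta}}/4)\Bigr\}.
\]
Since $\|(A-z)^{-1}\|_{\mathrm{HS}}^2\cdot|\det(A-z)|^2$ equals the sum of squared moduli of the cofactors of $A-z$, this is \emph{directly} a polynomial inequality in $\phi$, with no projection step. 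For a unitary matrix the Hilbert--Schmidt and operator norms of the resolvent differ by at most a factor $\sqrt{n}$, so this condition sandwiches cleanly between the two spectral-distance thresholds. Your route is perfectly valid; the paper's is just more explicit and makes the degree count immediate.
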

\begin{proof}
Fix $z\in\mathcal{D}(z_{0},r_{0})\cap \partial{\mathbb{D}}$. By truncating $x(\cdot,z)$'s Fourier series and the Taylor series of the trigonometric functions, one can obtain a polynomial $\tilde{x}(\cdot,z)$ of degree less than $C(d)N_{0}^{4}$ such that
$$\sup_{\phi\in\mathcal{I}_{0}}|x_{0}(\phi,z)-\tilde{x}_{0}(\phi,z)|\leq \exp(-N_{0}^{2}).$$
For any $[a,b]\subset \mathbb{Z}$, $\phi\in \mathcal{I}_{0}$,
\begin{equation*}
\|\mathcal{E}_{[a,b]}^{\beta,\eta}(x_{0}(\phi,z))-\mathcal{E}_{[a,b]}^{\beta,\eta}(\tilde{x}_{0}(\phi,z))\|\leq 8|x_{0}(\phi,z)-\tilde{x}_{0}(\phi,z)|\leq \exp(-N_{0}^{2}/2).
\end{equation*}
Let $\tilde{\alpha}_{i}$, $\tilde{\mathcal{E}}_{[a,b]}^{\beta,\eta}$ be as in \eqref{VCA} and \eqref{eigenvalue-estimate}. One can obtain that
$$\|\mathcal{E}_{[a,b]}^{\beta,\eta}(x_{0}(\phi,z))-\tilde{\mathcal{E}}_{[a,b]}^{\beta,\eta}(\tilde{x}_{0}(\phi,z))\|\leq \exp(-N_{0}^{2}/4)$$
for any $[a,b]\subset \mathbb{Z}$. Let
\begin{equation*}
\tilde{\mathcal{B}}_{0,z,\hat{h}}=\{\phi\in \mathcal{I}_{0}: \mathop{\min}\limits_{|n'|,|n''|<N_{0}^{\frac{1}{2}}}\|(\tilde{\mathcal{E}}_{[-N_{0}+n',N_{0}+n'']}^{\beta,\eta}(\tilde{x}_{0}(\phi,z)+\hat{h})-z)^{-1}\|_{\mathrm{HS}}
>\exp(3N_{0}^{\hat{\beta}}/4)\},
\end{equation*}
where $\|\cdot\|_{\mathrm{HS}}$ is the Hilbert-Schmidt norm. Equivalently, for $\phi\in \tilde{\mathcal{B}}_{0,z,\hat{h}}$, one can obtain that
$$\max_{|n'|,|n''|<N_{0}^{\frac{1}{2}}}\mathrm{dist}(\sigma(\tilde{\mathcal{E}}_{[-N_{0}+n',N_{0}+n'']}^{\beta,\eta}(\tilde{x}_{0}(\phi,z)+\hat{h})),z)<\exp(-3N_{0}^{\hat{\beta}}/4).$$
Thus, $\tilde{\mathcal{B}}_{0,z,\hat{h}}$ is a semialgebraic set of degree less  than $N_{0}^{20}$ and
$$\mathcal{B}_{0,z,\hat{h}}'\subset \tilde{\mathcal{B}}_{0,z,\hat{h}} \subset \mathcal{B}_{0,z,\hat{h}}.$$
This completes the proof.
\end{proof}

\begin{lemma}\label{Lemma4.3}
For any $z\in\mathcal{D}(z_{0},r_{0})\cap \partial{\mathbb{D}}$ there exists a semialgebraic set $\mathcal{B}_{0,z,N_{1}}$,
$$\mathrm{deg}(\mathcal{B}_{0,z,N_{1}})\lesssim N_{1}N_{0}^{20},\,\,\, \mathrm{mes}(\mathcal{B}_{0,z,N_{1}})<\exp(-N_{0}^{2\hat{\delta}}/2),$$
such that for any $\phi\in \mathcal{I}_{0}\backslash \mathcal{B}_{0,z,N_{1}}$ and any $3N_{0}/2<|m|\leq N_{1}$, there exist $|n'(\phi,m)|$, $|n''(\phi,m)|<N_{0}^{\frac{1}{2}}$ such that with $J_{m}=m+[-N_{0}+n'(\phi,m),N_{0}+n''(\phi,m)]$,
$$\mathrm{dist}(\sigma(\mathcal{E}_{J_{m}}^{\beta,\eta}(x_{0}(\phi,z))),z)\geq \exp(-N_{0}^{\hat{\beta}}).$$
\end{lemma}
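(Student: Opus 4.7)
The plan is to realize $\mathcal{B}_{0,z,N_{1}}$ as a union of the bad sets supplied by condition (C) at scale $s=0$, one for each integer translate $\hat{h}=m\omega$ with $3N_{0}/2<|m|\le N_{1}$, and then to convert each shifted-phase spectral statement into the shifted-interval statement claimed in Lemma 4.3.

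First, I would verify that each $\hat{h}=m\omega$ is admissible for condition (C), i.e.\ $\mathrm{dist}_{\mathbb{T}^{d}}(m\omega,\Upsilon_{0})\ge \exp(-N_{0}^{\hat{\mu}})$. For any $n\in\mathbb{Z}$ with $|n|\le 3N_{0}/2$ one has $m\neq n$ and $|m-n|\le 3N_{1}$, and the standard Diophantine condition \eqref{standard-DC} applied to $k=(m-n)e_{i}$ gives $\|(m-n)\omega_{i}\|\ge p(3N_{1})^{-q}$ for each coordinate $i$, hence the required torus distance bound. Since $N_{1}=\lfloor N_{0}^{1/\hat{\beta}}\rfloor$, this polynomial lower bound comfortably exceeds $\exp(-N_{0}^{\hat{\mu}})$ for $N_{0}$ large (as $\hat{\mu}>0$).

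Next, for each such $m$, Lemma \ref{Lemma4.2} applied with $\hat{h}=m\omega$ produces a semialgebraic set $\tilde{\mathcal{B}}_{0,z,m\omega}$ of degree $\le N_{0}^{20}$ and measure $<\exp(-N_{0}^{2\hat{\delta}})$, containing every $\phi\in\mathcal{I}_{0}$ for which $\max_{|n'|,|n''|<N_{0}^{1/2}}\mathrm{dist}(\sigma(\mathcal{E}^{\beta,\eta}_{[-N_{0}+n',N_{0}+n'']}(x_{0}(\phi,z)+m\omega)),z)<\exp(-N_{0}^{\hat{\beta}})$. I then set
$$\mathcal{B}_{0,z,N_{1}}:=\bigcup_{3N_{0}/2<|m|\le N_{1}}\tilde{\mathcal{B}}_{0,z,m\omega},$$
a union of at most $2N_{1}$ semialgebraic sets, hence semialgebraic of degree $\lesssim N_{1}N_{0}^{20}$, with total measure $\le 2N_{1}\exp(-N_{0}^{2\hat{\delta}})<\exp(-N_{0}^{2\hat{\delta}}/2)$ for large $N_{0}$, since $\log(2N_{1})=O((\log N_{0})/\hat{\beta})$ is dominated by $N_{0}^{2\hat{\delta}}/2$ via the hierarchy $\hat{\beta}^{2}\ll\hat{\delta}$ from \eqref{exponent-relation}.

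Finally, for $\phi\in\mathcal{I}_{0}\setminus\mathcal{B}_{0,z,N_{1}}$ and any admissible $m$, by construction there exist $|n'(\phi,m)|,|n''(\phi,m)|<N_{0}^{1/2}$ with
$$\mathrm{dist}(\sigma(\mathcal{E}^{\beta,\eta}_{[-N_{0}+n',N_{0}+n'']}(x_{0}(\phi,z)+m\omega)),z)\ge \exp(-N_{0}^{\hat{\beta}}),$$
and I would translate this into the required bound for $\mathcal{E}^{\beta,\eta}_{J_{m}}(x_{0}(\phi,z))$, with $J_{m}=m+[-N_{0}+n',N_{0}+n'']$, via the CMV shift covariance $\sigma(\mathcal{E}^{\beta,\eta}_{[a,b]}(x+m\omega))=\sigma(\mathcal{E}^{\beta,\eta}_{m+[a,b]}(x))$. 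For even $m$ this is immediate conjugation by the site shift on $\ell^{2}$. The hard part will be the odd-$m$ case: shifting the CMV structure by an odd number of sites interchanges $\mathcal{L}$ and $\mathcal{M}$, so one initially obtains the alternate matrix $\mathcal{M}\mathcal{L}$ rather than $\mathcal{L}\mathcal{M}$. These share spectra by the identity $\mathcal{M}(\mathcal{L}\mathcal{M})\mathcal{M}^{-1}=\mathcal{M}\mathcal{L}$ together with unitarity of $\mathcal{M}^{\beta,\eta}_{[a,b]}$; ensuring that the boundary values $\beta,\eta$ occupy the correct slots may require incrementing $n'$ or $n''$ by $\pm 1$, which stays inside the permitted range $N_{0}^{1/2}$. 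I expect the parity bookkeeping to be the only delicate point; the rest is a clean assembly of the Diophantine property, Lemma \ref{Lemma4.2}, and the union bound.
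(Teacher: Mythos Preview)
Your proposal is correct and follows essentially the same route as the paper: verify via the Diophantine condition that each $\hat{h}=m\omega$ with $3N_{0}/2<|m|\le N_{1}$ satisfies the hypothesis of condition~(C), invoke Lemma~\ref{Lemma4.2} to obtain the semialgebraic sets $\tilde{\mathcal{B}}_{0,z,m\omega}$, take their union, and read off the degree and measure bounds. The paper's proof is in fact terser than yours---it simply writes ``we can derive the conclusion according to the definition of $\mathcal{B}_{0,z,m\omega}'$'' and does not discuss the phase-to-interval translation at all; the shift covariance $\sigma(\mathcal{E}^{\beta,\eta}_{[a,b]}(x+m\omega))=\sigma(\mathcal{E}^{\beta,\eta}_{m+[a,b]}(x))$ is used implicitly here and explicitly later in Lemma~\ref{Lemma4.8}, without any parity bookkeeping. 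Your attention to the odd-$m$ case (where the $\mathcal{L}\mathcal{M}$ factorization is exchanged and one may need a $\pm 1$ adjustment in $n',n''$) is a genuine refinement over what the paper writes down, and your proposed resolution via the available slack $|n'|,|n''|<N_{0}^{1/2}$ is the right one. One minor point: your coordinate-wise application of the Diophantine condition is a bit roundabout---the paper simply uses $\|k\cdot\omega\|\ge p|k|^{-q}$ with the scalar $k=m-n$ directly---but the conclusion is the same.
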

\begin{proof}
Take arbitrary $3N_{0}/2<|m|\leq N_{1}$. Then $0<|m-n|<3N_{1}$ for any $n\in \Upsilon_{0}$. According to the Diophantine condition, we have that
$$\mathrm{dist}(m\omega,\Upsilon_{0})>\frac{p}{(3N_{1})^{q}}\geq p(CN_{0}^{\hat{A}})^{-q}>\exp(-N_{0}^{\hat{\mu}}).$$
Then the assumption in condition (C) is satisfied by choosing $\hat{h}=m\omega$. We let $\mathcal{B}_{0,z,N_{1}}:=\mathop{\cup}\limits_{m}\tilde{\mathcal{B}}_{0,z,m\omega}$, where $\tilde{\mathcal{B}}_{0,z,m\omega}$ are the semialgebraic sets from the statement of Lemma \ref{Lemma4.2}. Thus, $\mathcal{B}_{0,z,N_{1}}$ is a semialgebraic set of degree $\lesssim N_{1}N_{0}^{20}$ and
$$\mathrm{mes}(\mathcal{B}_{0,z,N_{1}})\lesssim N_{1}\exp(-N_{0}^{2\hat{\delta}})<\exp(-\frac{1}{2}N_{0}^{2\hat{\delta}}).$$
Take $\phi\in \mathcal{I}_{0}\backslash \mathcal{B}_{0,z,N_{1}}$. Then $\phi\in \mathcal{I}_{0}\backslash \mathcal{B}_{0,z,m\omega}'$ and we can derive the conclusion according to the definition of $\mathcal{B}_{0,z,m\omega}'$.
\end{proof}

\begin{lemma}\label{Lemma4.4}
There exists $\phi_{1}\in \mathbb{T}^{d-1}$, $|\phi_{1}-\phi_{0}|\ll r_{0}^{4}$, and $|N_{1}'-N_{1}|, |N_{1}''-N_{1}|\lesssim N_{0}$ such that the following statements hold:\\
$\mathrm{(i)}$ $\mathcal{I}_{1}'\subset \mathcal{I}_{0}\backslash \mathcal{B}_{0,z_{1},N_{1}}$, $\mathcal{I}_{1}'=\phi_{1}+(-r_{1}',r_{1}')^{d-1}$, $r_{1}'=\exp(-3N_{0}^{\hat{\beta}})$, with $\mathcal{B}_{0,z_{1},N_{1}}$ as in Lemma \ref{Lemma4.3}.\\
$\mathrm{(ii)}$ There exists $k_{1}$ such that for any $\phi\in \mathcal{I}_{1}'$, $y\in \mathbb{C}^{d}$, $|y|<r_{1}'$, $z\in \partial \mathbb{D}$, $|z-z_{1}|<r_{1}',$
\begin{align}
&|z_{k_{1}}^{[-N_{1}',N_{1}'']}(x_{0}(\phi,z)+y)-z^{[-N_{0}',N_{0}'']}(x_{0}(\phi,z)+y)|<\exp(-\gamma N_{0}/40),\label{Lem4.4-(1)}\\
&|z_{j}^{[-N_{1}',N_{1}'']}(x_{0}(\phi,z)+y)-z_{k_{1}}^{[-N_{1}',N_{1}'']}(x_{0}(\phi,z)+y)|>\frac{1}{8}\exp(-N_{0}^{\hat{\beta}}),
\,\,j\neq k_{1},\label{Lem4.4-(2)}\\
&|u_{k_{1}}^{[-N_{1}',N_{1}'']}(x_{0}(\phi,z)+y,s)|<\exp(-\gamma |s|/20),\,\, |s|\geq 3N_{0}/4,\label{Lem4.4-(3)}\\
&\|u_{k_{1}}^{[-N_{1}',N_{1}'']}(x_{0}(\phi,z)+y,\cdot)-u^{[-N_{0}',N_{0}'']}(x_{0}(\phi,z)+y,\cdot)\|<\exp(-\gamma N_{0}/40).\label{Lem4.4-(4)}
\end{align}
\end{lemma}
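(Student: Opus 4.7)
The plan is to first locate $\phi_1$ near $\phi_0$ so that the entire cube $\mathcal{I}_1' = \phi_1 + (-r_1', r_1')^{d-1}$ avoids the bad set $\mathcal{B}_{0,z_1,N_1}$ produced by Lemma~\ref{Lemma4.3}, and then to apply the finite-scale lifting Lemma~\ref{finite-scale-lemma-2} to upgrade from scale $N_0$ to scale $N_1$ on that cube.

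For part (i), by Lemma~\ref{Lemma4.3} the set $\mathcal{B}_{0,z_1,N_1}$ is semialgebraic of degree $\lesssim N_1 N_0^{20}$ and has measure less than $\exp(-N_0^{2\hat{\delta}}/2)$. Although this measure is (much) larger than the volume $(2r_1')^{d-1}$ of the target cube--because $\hat\beta \gg \hat\delta$ forces $r_1' = \exp(-3N_0^{\hat\beta})$ to be very small--the low algebraic degree permits covering $\mathcal{B}_{0,z_1,N_1}$ by polynomially many small balls via the semialgebraic covering results recalled in the Appendix. Placing $\phi_1$ in the cube $\phi_0 + (-r_0^4, r_0^4)^{d-1} \subset \mathcal{I}_0$ but at distance at least $r_1'$ from every covering ball then gives $|\phi_1 - \phi_0| \ll r_0^4$ and $\mathcal{I}_1' \cap \mathcal{B}_{0,z_1,N_1} = \emptyset$. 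This selection step is the main technical obstacle, since pure measure considerations are insufficient and the structural semialgebraic bound must be used essentially.

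For part (ii), fix $\phi \in \mathcal{I}_1'$. Since $\phi \notin \mathcal{B}_{0,z_1,N_1}$, Lemma~\ref{Lemma4.3} supplies, for every $3N_0/2 < |m| \leq N_1$, an interval $J_m = m + [-N_0 + n'(\phi,m), N_0 + n''(\phi,m)]$ with $|n'(\phi,m)|, |n''(\phi,m)| < N_0^{1/2}$ and
\begin{equation*}
\mathrm{dist}\bigl(\sigma(\mathcal{E}_{J_m}^{\beta,\eta}(x_0(\phi,z_1))),\, z_1\bigr) \geq \exp(-N_0^{\hat\beta}).
\end{equation*}
By Lipschitz continuity of $x_0$ in $(\phi, z)$ and the operator Lipschitz bound for $\mathcal{E}_{J_m}^{\beta,\eta}$ in the phase, combined with $r_1' \ll \exp(-N_0^{\hat\beta})$, the same estimate (with the $1/2$ factor tolerated by Lemma~\ref{another-covering-form-LDT}) survives perturbation by $|y| < r_1'$ and replacement of $z_1$ by $z \in \mathcal{D}(z_1, r_1') \cap \partial\mathbb{D}$. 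This verifies the hypotheses of Lemma~\ref{another-finite-scale-localization}.

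Meanwhile, inductive conditions (A) and (B) at $s=0$ yield exactly the hypotheses (i)--(ii) of Lemma~\ref{finite-scale-lemma-2}: the eigenvalue $z^{[-N_0', N_0'']}(x_0(\phi, z))$ equals $z$ (hence is $\exp(-2N_0^{\hat\beta})$-close to $z_1$ after shrinking $r_1'$) and its eigenvector has the required decay at the two endpoints of $[-N_0', N_0'']$. Applying Lemma~\ref{finite-scale-lemma-2} with the family $\{J_m\}$ just constructed and $[-n', n''] = [-N_1', N_1'']$, where $N_1', N_1''$ are determined by the union of the $J_m$'s, produces the index $k_1$ together with the four conclusions \eqref{Lem4.4-(1)}--\eqref{Lem4.4-(4)} directly from statements (1)--(4) of that lemma. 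The bounds $|N_1' - N_1|, |N_1'' - N_1| \lesssim N_0$ follow from $|n'(\phi, m)|, |n''(\phi, m)| < N_0^{1/2}$ together with $|J_m| \leq 2N_0 + 2N_0^{1/2}$, once one observes that the extreme $m$'s contributing to $[-N_1', N_1'']$ satisfy $|m| = N_1 + O(1)$.
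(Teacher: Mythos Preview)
Your treatment of part~(i) is more explicit than the paper's (which simply asserts the existence of $\phi_1$ ``according to the definition of $\mathcal{B}_{0,z_1,N_1}$''), and your observation that pure measure comparison is insufficient---so that the semialgebraic degree bound from Lemma~\ref{Lemma4.3} must be used to control the $r_1'$-fattening of the bad set---is well taken.

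Part~(ii), however, has a genuine gap. You begin by fixing an arbitrary $\phi\in\mathcal{I}_1'$ and then take the intervals $J_m=m+[-N_0+n'(\phi,m),N_0+n''(\phi,m)]$ from Lemma~\ref{Lemma4.3} \emph{for that particular} $\phi$. Consequently your interval
\[
[-N_1',N_1'']=[-3N_0/2,3N_0/2]\cup\bigcup_{3N_0/2<|m|\le N_1}J_m
\]
and the index $k_1$ produced by Lemma~\ref{finite-scale-lemma-2} depend on $\phi$, whereas the statement of Lemma~\ref{Lemma4.4} asserts the existence of \emph{fixed} $N_1',N_1'',k_1$ valid for all $\phi\in\mathcal{I}_1'$ simultaneously. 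A related inconsistency is that your base point for the spectral gap of the $J_m$'s is $x_0(\phi,z_1)$, while you verify hypotheses (i)--(ii) of Lemma~\ref{finite-scale-lemma-2} at $x_0(\phi,z)$; these are not the same point, and Lemma~\ref{finite-scale-lemma-2} requires both to be checked at one common base point $x_0$.

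The paper resolves this by choosing the intervals $J_m$ once and for all at the center $\phi_1$, defining $[-N_1',N_1'']$ from \emph{those} intervals, and applying Lemma~\ref{finite-scale-lemma-2} with the single base point $x_0=x_0(\phi_1,z_1)$ and $z_0=z_1$. The point is that for every $\phi\in\mathcal{I}_1'$, $|y|<r_1'$, $|z-z_1|<r_1'$ one has
\[
|x_0(\phi,z)+y-x_0(\phi_1,z_1)|\le \exp(CN_0^{\hat\delta})(|\phi-\phi_1|+|z-z_1|+|y|)<\exp(-2N_0^{\hat\beta}),
\]
so the built-in perturbation in Lemma~\ref{finite-scale-lemma-2} (its conclusions hold for all $|x-x_0|<\exp(-2N_0^{\hat\beta})$) already delivers \eqref{Lem4.4-(1)}--\eqref{Lem4.4-(4)} at every such $x=x_0(\phi,z)+y$, with the same fixed $N_1',N_1'',k_1$. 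Your separate Lipschitz perturbation of the spectral gaps then becomes unnecessary.
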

\begin{proof}
According to the definition of the set $\mathcal{B}_{0,z_{1},N_{1}}$, there exists $\phi_{1}$ satisfying $|\phi_{1}-\phi_{0}|\ll r_{0}^{4}$ such that $\mathcal{I}_{1}'\subset \mathcal{I}_{0}\backslash \mathcal{B}_{0,z_{1},N_{1}}$. Take the intervals $J_{m}=m+[-N_{0}+n'(\phi_{1},m),N_{0}+n''(\phi_{1},m)]$ from Lemma \ref{Lemma4.3}. Define
\begin{equation}\label{Lem4.4-(5)}
[-N_{1}',N_{1}'']=[-3N_{0}/2,3N_{0}/2]\cup\mathop{\cup}\limits_{3N_{0}/2<|m|\leq N_{1}}J_{m}.
\end{equation}
Using Lemma \ref{Lemma4.3}, we have that
$$\mathrm{dist}(\sigma(\mathcal{E}_{J_{m}}^{\beta,\eta}(x_{0}(\phi_{1},z_{1}))),z_{1})\geq \exp(-N_{0}^{\hat{\beta}})\geq \exp(-N_{0}^{\tilde{\beta}}),$$
which implies that the assumptions of Lemma \ref{another-finite-scale-localization} hold.

For $\phi\in \mathcal{I}_{1}'$, $|y|<\exp(-3N_{0}^{\hat{\beta}})$, $|z-z_{1}|<\exp(-3N_{0}^{\hat{\beta}})$, we get that
$$|x_{0}(\phi,z)+y-x_{0}(\phi_{1},z_{1})|\leq \exp(CN_{0}^{\hat{\delta}})(|\phi-\phi_{1}|+|z-z_{1}|+|y|)<\exp(-2N_{0}^{\hat{\beta}}).$$
On the other hand, due to equation \eqref{(A)-(1)}, $$\big|z_{k_{0}}^{[-N_{0}',N_{0}'']}(\omega,z;x_{0}(\phi_{1},z_{1}))-z_{0}\big|<\exp(-2N_{0}^{\hat{\beta}}).$$
Since $|N_{0}'-N_{0}|<N_{0}^{\frac{1}{2}}$, $|N_{0}''-N_{0}|<N_{0}^{\frac{1}{2}}$, we have that
$$|-(N_{0}'-i)|,\,\, |N_{0}''-i|>\frac{3N_{0}}{4},\,\,i=0,1,2,3.$$
Using \eqref{(B)}, one can obtain that
$$\big|u_{k_{0}}^{[-N_{0}',N_{0}'']}(\omega,x_{0}(\phi_{1},z_{1});-(N_{0}'-i))\big|\leq\exp(-\gamma|-(N_{0}'-i)|/10)<\exp(-2N_{0}^{\hat{\beta}}),$$
and
$$\big|u_{k_{0}}^{[-N_{0}',N_{0}'']}(\omega,x_{0}(\phi_{1},z_{1});N_{0}''-i)\big|\leq\exp(-\gamma|N_{0}''-i|/10)<\exp(-2N_{0}^{\hat{\beta}}).$$
Then the assumptions of Lemma \ref{finite-scale-lemma-2} satisfied by taking $x=x_{0}(\phi,z)+y$, $x_{0}=x_{0}(\phi_{1},z_{1})$, and $z_{0}=z_{1}$. Therefore, by applying Lemma \ref{finite-scale-lemma-2}, the conclusion follows.
\end{proof}

\begin{lemma}\label{Lemma4.5}
$\mathrm{(i)}$ The function $z^{[-N_{0}',N_{0}'']}$ is analytic on $\{\hat{x}\in\mathbb{C}^{d}: |\hat{x}-x_{0}(\phi,z)|<\exp(-2N_{0}^{\hat{\delta}})\}$, for any $(\phi,z)\in \Pi_{0}$.\\
$\mathrm{(ii)}$ The function $z^{[-N_{1}',N_{1}'']}$ is analytic on $\{\hat{x}\in\mathbb{C}^{d}: |\hat{x}-x_{0}(\phi,z)|<\exp(-2N_{0}^{\hat{\beta}})\}$, for any $(\phi,z)\in \Pi_{1}''$ with $\Pi_{1}''=\mathcal{I}_{1}'\times(\mathcal{D}(z_{1},r_{1}')\cap\partial\mathbb{D}).$\\
$\mathrm{(iii)}$ For any $(\phi,z)\in \Pi_{1}''$,
\begin{equation}\label{Lem4.5-(1)}
|z^{[-N_{1}',N_{1}'']}(x_{0}(\phi,z))-z|<\exp(-\gamma N_{0}/40),
\end{equation}
\begin{equation}\label{Lem4.5-(2)}
|\partial_{z}z^{[-N_{1}',N_{1}'']}(x_{0}(\phi,z))-1|<\exp(-\gamma N_{0}/100).
\end{equation}
\end{lemma}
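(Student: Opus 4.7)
Parts (i) and (ii) are both applications of analytic perturbation theory for an isolated simple eigenvalue of a holomorphic matrix-valued function. Since $\alpha$ is analytic on $\mathbb{T}^d_h$, the matrix $\hat{x}\mapsto\mathcal{E}^{\beta,\eta}_{[-N_0',N_0'']}(\hat{x})$ is holomorphic on $\mathbb{T}^d_h$, and by Cauchy estimates in the strip (using that $x_0(\phi,z)\in\mathbb{T}^d_{h/2}$ by condition (A)) has uniformly bounded operator-norm Lipschitz constant $C=C(h)$. For (i), condition (A)-(2) supplies separation $\exp(-N_0^{\hat{\delta}})$ at $\hat{x}=x_0(\phi,z)$; on the ball $|\hat{x}-x_0(\phi,z)|<\exp(-2N_0^{\hat{\delta}})$ the matrix perturbation is at most $C\exp(-2N_0^{\hat{\delta}})\ll\exp(-N_0^{\hat{\delta}})$, so by Bauer--Fike all eigenvalues move only by this amount and the distinguished eigenvalue stays simple and isolated. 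Kato's analytic perturbation theorem then yields holomorphy of $z^{[-N_0',N_0'']}(\hat{x})$ on this ball. For (ii) the argument is identical, using instead the separation $\tfrac{1}{8}\exp(-N_0^{\hat{\beta}})$ supplied by Lemma \ref{Lemma4.4}-(2) at $y=0$, which dominates the matrix perturbation $C\exp(-2N_0^{\hat{\beta}})$ on the ball $|\hat{x}-x_0(\phi,z)|<\exp(-2N_0^{\hat{\beta}})$.

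The first estimate in (iii) is immediate. Evaluating condition (A)-(1) at scale $s=0$ gives $z^{[-N_0',N_0'']}(x_0(\phi,z))=z$, and Lemma \ref{Lemma4.4}-(1) applied with $y=0$ yields
\begin{equation*}
\bigl|z^{[-N_1',N_1'']}(x_0(\phi,z))-z\bigr|=\bigl|z^{[-N_1',N_1'']}(x_0(\phi,z))-z^{[-N_0',N_0'']}(x_0(\phi,z))\bigr|<\exp(-\gamma N_0/40).
\end{equation*}

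For the derivative bound, set $F(y):=z^{[-N_1',N_1'']}(x_0(\phi,z)+y)-z^{[-N_0',N_0'']}(x_0(\phi,z)+y)$. By parts (i) and (ii) $F$ is holomorphic on the polydisk $|y|<\exp(-2N_0^{\hat{\beta}})$, and by Lemma \ref{Lemma4.4}-(1) it is bounded on the smaller polydisk $|y|<r_1'=\exp(-3N_0^{\hat{\beta}})$ by $\exp(-\gamma N_0/40)$. Cauchy's estimate on this smaller polydisk gives $|\nabla F(0)|<\exp(-\gamma N_0/40)/r_1'<\exp(-\gamma N_0/50)$ for $N_0$ large enough (using $\hat{\beta}<1$). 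Differentiating the implicit identity $z^{[-N_0',N_0'']}(x_0(\phi,z))=z$ in $z$ gives $\nabla_{\hat{x}}z^{[-N_0',N_0'']}(x_0(\phi,z))\cdot\partial_z x_0(\phi,z)=1$, so by the chain rule
\begin{equation*}
\partial_z\bigl[z^{[-N_1',N_1'']}(x_0(\phi,z))\bigr]-1=\nabla F(0)\cdot\partial_z x_0(\phi,z).
\end{equation*}
The remaining input is the upper bound $|\partial_z x_0(\phi,z)|\lesssim\exp(N_0^{\hat{\mu}}/2)$, obtained by applying condition (D) at scale $s=0$ with $h_0=\partial_z x_0/|\partial_z x_0|$: the identity $\langle\nabla z^{[-N_0',N_0'']},\partial_z x_0\rangle=1$ combined with the lower bound $|\langle\nabla z^{[-N_0',N_0'']},h_0\rangle|\geq\exp(-N_0^{\hat{\mu}}/2)$ forces $|\partial_z x_0|\leq\exp(N_0^{\hat{\mu}}/2)$. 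This is where the main obstacle lies: one must check that the exceptional set in (D) at scale $0$ is already excluded by the passage to $\mathcal{I}_1'$ (or else absorb it into an enlargement of $\mathcal{B}_{0,z_1,N_1}$ of comparable, still exponentially small, measure $\exp(-N_0^{2\hat{\delta}})$, which does not invalidate Lemma \ref{Lemma4.4}). With this Lipschitz bound in hand, the chain-rule identity yields
\begin{equation*}
\bigl|\partial_z z^{[-N_1',N_1'']}(x_0(\phi,z))-1\bigr|<\exp(-\gamma N_0/50+N_0^{\hat{\mu}}/2)<\exp(-\gamma N_0/100),
\end{equation*}
since $\hat{\mu}\ll 1$ by \eqref{exponent-relation}.
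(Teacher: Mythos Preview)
Your treatment of (i) and (ii) via Kato's analytic perturbation theory for a simple isolated eigenvalue is correct and in fact cleaner than the paper's route, which factors $\varphi^{\beta,\eta}_{[-N_0',N_0'']}$ through the Weierstrass Preparation Theorem and reads analyticity of the distinguished root off the polynomial factor. Both arguments rest on the same input (the eigenvalue separation from (A)-(2) respectively \eqref{Lem4.4-(2)}), so nothing is lost or gained. The first bound in (iii) is handled identically in both.

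The difference is in \eqref{Lem4.5-(2)}. The paper applies the Cauchy estimate \emph{directly} to the composite map $z\mapsto z^{[-N_1',N_1'']}(x_0(\phi,z))-z$, using that \eqref{Lem4.5-(1)} holds on a complex $z$-disk of radius comparable to $\exp(-2N_0^{\hat{\beta}})$; this yields \eqref{Lem4.5-(2)} in one line, with no need to control $\partial_z x_0$ separately. Your chain-rule decomposition is also viable, but the appeal to condition (D) for the bound on $|\partial_z x_0|$ is a genuine problem: the exceptional set in (D) depends on the direction $h_0$, and here $h_0=\partial_z x_0(\phi,z)/|\partial_z x_0(\phi,z)|$ itself varies with $(\phi,z)$, so you cannot remove it by enlarging $\mathcal{B}_{0,z_1,N_1}$ once. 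The lemma asserts the estimate for \emph{every} $(\phi,z)\in\Pi_1''$, so an argument that only works off an uncontrolled, $(\phi,z)$-dependent exceptional set does not close.

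The fix is immediate and bypasses (D) entirely: $x_0$ is analytic and bounded (in $\mathbb{T}^d_{h/2}$) on the domain $\Pi_0$ of size $r_0=\exp(-N_0^{\hat{\delta}})$, so a plain Cauchy estimate gives $|\partial_z x_0(\phi,z)|\lesssim\exp(CN_0^{\hat{\delta}})$ uniformly --- exactly the Lipschitz bound the paper uses elsewhere (e.g.\ in the proof of Lemma~\ref{Lemma4.4}). Plugging this into your chain-rule identity gives
\[
\bigl|\partial_z z^{[-N_1',N_1'']}(x_0(\phi,z))-1\bigr|\le |\nabla F(0)|\cdot|\partial_z x_0|<\exp(-\gamma N_0/50+CN_0^{\hat{\delta}})<\exp(-\gamma N_0/100),
\]
since $\hat{\delta}<1$. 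With this correction your argument is complete; the paper's direct Cauchy estimate is simply the shorter path to the same conclusion.
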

\begin{proof}
Applying Weierstrass' Preparation Theorem (Lemma \ref{Weierstrass}) with $z_{0}\in \partial\mathbb{D}$, $R_{0}=3\exp(-2N_{0}^{\hat{\delta}})$,
$r=\exp(-2N_{0}^{\hat{\delta}})$, $0<r_{j,0}<R_{0}$, there exist a polynomial $P(z,\underline{\omega})=z^{k}+a_{k-1}(\underline{\omega})z^{k-1}+\cdots+a_{0}(\underline{\omega})$ with $a_{j}(\underline{\omega})$ analytic in $\mathcal{P}=\prod_{j=1}^{d}\mathcal{D}(\omega_{j,0},r_{j,0})$ and an analytic function $g(z,\underline{\omega})$, $(z,\underline{\omega})\in\mathcal{D}(z_{0},r)\times \mathcal{P}$ such that the following properties hold:\\
$\mathrm{(a)}$ $\varphi_{[-N_{0}',N_{0}'']}^{\beta,\eta}=P(z,\underline{\omega})g(z,\underline{\omega})$ for any $(z,\underline{\omega})\in \mathcal{D}(z_{0},r)\times \mathcal{P}$,\\
$\mathrm{(b)}$ $g(z,\underline{\omega})\neq 0$ for any $(z,\underline{\omega})\in \mathcal{D}(z_{0},r)\times \mathcal{P}$,\\
$\mathrm{(c)}$ for any $\underline{\omega}\in \mathcal{P}$, $P(z,\underline{\omega})$ has no zeros in $\mathbb{C}\backslash\mathcal{D}(z_{0},r)$.

Due to (b), the zeros of $\varphi_{[-N_{0}',N_{0}'']}^{\beta,\eta}$ could only be the zeros of $P(z,\underline{\omega})$. In addition, it follows that the zeros of $P(z,\underline{\omega})$ lie in $\mathcal{D}(z_{0},r)$ from (c). Thus, the  zeros of $\varphi_{[-N_{0}',N_{0}'']}^{\beta,\eta}$ lie in $\mathcal{D}(z_{0},r)\cap \partial\mathbb{D}$. According to the information about the map $x_{0}$ in condition (A), the statement (i) follows. Then we can prove (ii) in the same way. Using \eqref{Lem4.4-(1)}, the estimate \eqref{Lem4.5-(1)} holds. Applying Cauchy estimates to $z^{[-N_{1}',N_{1}'']}(x_{0}(\phi,z))-z$, one can obtain that
\begin{align*}
|\partial_{z}z^{[-N_{1}',N_{1}'']}(x_{0}(\phi,z))-1|\leq \frac{\max|z^{[-N_{1}',N_{1}'']}(x_{0}(\phi,z))-z|}{\exp(-2N_{0}^{\hat{\beta}})}&\leq\frac{\exp(-\gamma N_{0}/40)}{\exp(-2N_{0}^{\hat{\beta}})}\\
&\leq \exp(-\gamma N_{0}/100).
\end{align*}

\end{proof}

In what follows, $a\dot{<}b\dot{<}c$ means that $a, b, c$ lie in $\partial \mathbb{D}$ with $b$ between $a$ and $c$ on the arc going counterclockwise from $a$ to $c$.

\begin{lemma}\label{Lemma4.6}
Let $\Pi_{1}''=\{(\phi,z)\in \mathbb{C}^{d}: |\phi-\phi_{1}|, |z-z_{1}|<\exp(-C_{0}N_{0}^{\hat{\beta}})\}$, with $C_{0}=C_{0}(d)\gg 1$. There exists a map $x_{1}:\Pi_{1}''\rightarrow \mathbb{R}^{d}$ such that
\begin{align}
z^{[-N_{1}',N_{1}'']}&(x_{1}(\phi,z))=z,\quad (\phi,z)\in \Pi_{1}'',\label{Lem4.6-(1)}\\
&x_{1}(\Pi_{1}'')\subset \mathbb{T}_{h/2}^{d}.\label{Lem4.6-(2)}
\end{align}
Furthermore, for any $(\phi,z)\in \Pi_{1}''$,
\begin{equation}\label{Lem4.6-(3)}
|x_{1}(\phi,z)-x_{0}(\phi,z)|<\exp(-\gamma N_{0}/50).
\end{equation}
\end{lemma}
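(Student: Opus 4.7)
The strategy is to reduce the equation $z^{[-N_1',N_1'']}(x)=z$, which is one scalar equation in $d$ unknowns, to a one-dimensional fixed-point problem by moving along the curve $\tilde z\mapsto x_0(\phi,\tilde z)$ already furnished by condition (A). Concretely, set
\begin{equation*}
F(\phi,\tilde z):=z^{[-N_1',N_1'']}(x_0(\phi,\tilde z)),
\end{equation*}
seek a holomorphic $\tilde z(\phi,z)$ close to $z$ solving $F(\phi,\tilde z(\phi,z))=z$, and define
\begin{equation*}
x_1(\phi,z):=x_0(\phi,\tilde z(\phi,z)),
\end{equation*}
so that \eqref{Lem4.6-(1)} holds automatically.

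The first step is to verify the hypotheses of the quantitative inverse function theorem for $F$. Estimate \eqref{Lem4.5-(1)} gives $|F(\phi,z)-z|<\exp(-\gamma N_0/40)$, while \eqref{Lem4.5-(2)} gives $|\partial_{\tilde z}F(\phi,z)-1|<\exp(-\gamma N_0/100)$. By Lemma~\ref{Lemma4.5}(ii), $F$ extends holomorphically in $\tilde z$ on a disc whose radius can be taken comparable to $\exp(-2N_0^{\hat{\beta}})$. Taking $C_0=C_0(d)$ sufficiently large in the definition of $\Pi_1''$ leaves enough room to run a Newton iteration (or equivalently invoke the holomorphic implicit function theorem with parameters), producing a unique jointly holomorphic right inverse $\tilde z(\phi,z)$ on $\Pi_1''$ satisfying
\begin{equation*}
|\tilde z(\phi,z)-z|\le 2\exp(-\gamma N_0/40).
\end{equation*}

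To obtain \eqref{Lem4.6-(3)}, I would apply Cauchy's estimate to the analytic extension of $x_0(\phi,\cdot)$. Since $x_0$ is analytic in $\tilde z$ on a disc of radius $\sim r_0=\exp(-N_0^{\hat{\delta}})$ and takes values in the bounded strip $\mathbb{T}_{h/2}^d$, one has $|\partial_{\tilde z}x_0|\lesssim \exp(N_0^{\hat{\delta}})$. Writing
\begin{equation*}
|x_1(\phi,z)-x_0(\phi,z)|=|x_0(\phi,\tilde z(\phi,z))-x_0(\phi,z)|\lesssim \exp(N_0^{\hat{\delta}})\exp(-\gamma N_0/40),
\end{equation*}
and using $\hat{\delta}\ll 1$ from \eqref{exponent-relation}, this is $<\exp(-\gamma N_0/50)$ for $N_0$ large, which is \eqref{Lem4.6-(3)}. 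The inclusion $x_1(\Pi_1'')\subset \mathbb{T}_{h/2}^d$ then follows because condition (A) places $x_0$ strictly inside $\mathbb{T}_{h/2}^d$ (up to an inductive buffer in the strip width) and the correction is exponentially small.

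The main technical point is the careful accounting of scales: one must verify that the polydisc $\Pi_1''$ of radius $\exp(-C_0N_0^{\hat{\beta}})$ is small enough that the analyticity radius in $\tilde z$ provided by Lemma~\ref{Lemma4.5} leaves room for Cauchy shrinkage, yet large enough for the intended inductive use. Absorbing the constants from Cauchy estimates, the Newton iteration, and the extension of $x_0$ into the choice of $C_0=C_0(d)\gg 1$ makes everything consistent; joint analyticity in $(\phi,z)$ of $\tilde z$, and hence of $x_1$, is automatic from holomorphic parameter dependence of fixed points.
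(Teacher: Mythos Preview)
Your proposal is correct and follows essentially the same strategy as the paper: reduce to a one-variable equation by writing $F(\phi,\tilde z)=z^{[-N_1',N_1'']}(x_0(\phi,\tilde z))$, solve $F(\phi,\tilde z)=z$ for $\tilde z$ near $z$, and set $x_1(\phi,z)=x_0(\phi,\tilde z(\phi,z))$; the closeness estimate \eqref{Lem4.6-(3)} then follows from the Cauchy bound on $\partial_{\tilde z}x_0$ exactly as you describe.

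There is one genuine difference worth flagging. The paper's implicit function lemma (Lemma~\ref{implicit}) is stated in the normalized form $f(0,0)=0$, so before invoking it the authors first manufacture an exact solution $\eta_1$ at the base point $(\phi_1,z_1)$. Because $z^{[-N_1',N_1'']}$ is complex-valued, they cannot simply use real monotonicity; instead they introduce the circular ordering $a\dot{<}b\dot{<}c$ on $\partial\mathbb{D}$ and run an intermediate-value argument in the angular variable to find this $\eta_1$ (see the remark following Lemma~\ref{Lemma4.6}). Your approach sidesteps this entirely by running a Newton iteration directly from the approximate solution $\tilde z_0=z$: since \eqref{Lem4.5-(1)} gives a small residual and \eqref{Lem4.5-(2)} gives $\partial_{\tilde z}F$ close to $1$, the iteration converges without any need for a preliminary existence step. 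This is cleaner and avoids the circular-ordering machinery, at the cost of not matching the specific implicit-function lemma quoted in the appendix. The scale bookkeeping and the final Cauchy estimate are identical in the two arguments.
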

\begin{proof}
Applying Lemma \ref{Lemma4.5}, one can obtain that
\begin{equation}\label{Lem4.6-(4)}
|z^{[-N_{1}',N_{1}'']}(x_{0}(\phi,z))-z|<\exp(-\gamma N_{0}/40)
\end{equation}
for any $\phi\in \mathcal{I}_{1}'$ and any $|z-z_{1}|<\exp(-3N_{0}^{\hat{\beta}})$. Given $z$ satisfying $|z-z_{1}|<\exp(-3N_{0}^{\hat{\beta}})$, we let $|z_{\pm}-z|=\exp(-\gamma N_{0}/50)$. Since $|z_{\pm}-z_{1}|<\exp(-3N_{0}^{\hat{\beta}})$, using inequality \eqref{Lem4.6-(4)} one can obtain that
\begin{align*}
|z^{[-N_{1}',N_{1}'']}(x_{0}(\phi,z_{-}))-z_{-}|<\exp(-\gamma N_{0}/40),\\
|z^{[-N_{1}',N_{1}'']}(x_{0}(\phi,z_{+}))-z_{+}|<\exp(-\gamma N_{0}/40).
\end{align*}
Due to $\exp(-\gamma N_{0}/40)<\exp(-\gamma N_{0}/50)$, we get that
$$z^{[-N_{1}',N_{1}'']}(x_{0}(\phi,z_{-}))\dot{<}z\dot{<}z^{[-N_{1}',N_{1}'']}(x_{0}(\phi,z_{+})).$$
Since $z^{[-N_{1}',N_{1}'']}(x_{0}(\phi,z_{-})), z, z^{[-N_{1}',N_{1}'']}(x_{0}(\phi,z_{+}))$ lie in $\partial\mathbb{D}$, we can regard them as the functions about the variable $\exp(i \theta)$. According to Lemma \ref{Lemma4.5}, $z^{[-N_{1}^{'},N_{1}'']}$ is analytic in the region of angle between the argument of $z^{[-N_{1}',N_{1}'']}(x_{0}(\phi,z_{-}))$ to the argument of $z^{[-N_{1}',N_{1}'']}(x_{0}(\phi,z_{+}))$. Thus, $z^{[-N_{1}',N_{1}'']}$ is continuous in the above region. By the intermediate value theorem, there exists a $\theta$ such that $z^{[-N_{1}',N_{1}'']}(e^{i\theta})=z$. Correspondingly, there exists a $\eta$ satisfying $z_{-}\dot{<}\eta\dot{<}z_{+}$ such that
\begin{equation}\label{Lem4.6-(5)}
z^{[-N_{1}',N_{1}'']}(x_{0}(\phi,\eta))=z.
\end{equation}
Let $\eta_{1}$ be the solution corresponding to $\phi=\phi_{1}$ and $z=z_{1}$. From \eqref{Lem4.5-(2)}, we get that $\partial_{\eta}z^{[-N_{1}',N_{1}'']}(x_{0}(\phi,\eta))\geq \frac{1}{2}$. According to the implicit function theorem (Lemma \ref{implicit}), for $$|\phi-\phi_{1}|, |z-z_{1}|<\exp(-C_{0}N_{0}^{\hat{\beta}}),$$
there exists a unique analytic solution $\eta(\phi,z)$ of \eqref{Lem4.6-(5)}, with $|\eta(\phi,z)-\eta_{1}|<\exp(-CN_{0}^{\hat{\beta}})$. Furthermore, $|\eta(\phi,z)-z|<2\exp(-\gamma N_{0}/40)$.

Taking $x_{1}(\phi,z)=x_{0}(\phi,\eta(\phi,z))$, \eqref{Lem4.6-(1)} and \eqref{Lem4.6-(2)} hold. Using Cauchy estimate,
\begin{align*}
|x_{1}(\phi,z)-x_{0}(\phi,z)|=|x_{0}(\phi,\eta(\phi,z))-x_{0}(\phi,z)|&<\exp(3N_{0}^{\hat{\delta}})\cdot 2\exp(-\gamma N_{0}/40)\\
&<\exp(-\gamma N_{0}/50).
\end{align*}
\end{proof}

\begin{remark}
In the above lemma, to prove the existence of map $x_{1}$, we apply the implicit function theorem for the analytic function $z^{[-N_{1}^{'},N_{1}'']}$. Since $z^{[-N_{1}^{'},N_{1}'']}$ is a complex function, it is difficult to verify the initial condition. For a MF-QP Schr\"{o}dinger operator, its eigenvalue is a real function. In \cite{GSV19-Inventiones}, the authors verified the initial condition by using the intermediate value theorem. While for complex $z^{[-N_{1}^{'},N_{1}'']}$, we can not compare the size of two function values. This brings difficulties to the analysis of the initial condition in the implicit function theorem. To overcome these difficulties, we regard $z^{[-N_{1}^{'},N_{1}'']}$ as a function about the variable $\exp(i \theta)$ and quote the notation $a\dot{<}b\dot{<}c$ in \cite[Theorem 11.1.1]{Simon-book2}.
\end{remark}

\begin{coro}\label{Coro4.7}
Using the notation of Lemma \ref{Lemma4.6}, for any $(\phi,z)\in \Pi_{1}''$,
\begin{align}
&|z-z_{j}^{[-N_{1}',N_{1}'']}(x_{1}(\phi,z))|>\frac{1}{8}\exp(-N_{0}^{\hat{\beta}})>\exp(-N_{1}^{\hat{\delta}}),
\,\,j\neq k_{1},\label{Coro4.7-(1)}\\
&|u^{[-N_{1}',N_{1}'']}(x_{1}(\phi,z),s)|<\exp(-\gamma |s|/20),\,\, |s|\geq 3N_{0}/4,\label{Coro4.7-(2)}\\
&\|u^{[-N_{1}',N_{1}'']}(x_{1}(\phi,z),\cdot)-u^{[-N_{0}',N_{0}'']}(x_{1}(\phi,z),\cdot)\|<\exp(-\gamma N_{0}/40),\label{Coro4.7-(3)}\\
&\|u^{[-N_{1}',N_{1}'']}(x_{1}(\phi,z),\cdot)-u^{[-N_{0}',N_{0}'']}(x_{0}(\phi,z),\cdot)\|<\exp(-\gamma N_{0}/500).\label{Coro4.7-(4)}
\end{align}
\end{coro}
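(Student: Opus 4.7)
The plan is to obtain all four estimates by specializing Lemma~\ref{Lemma4.4} to the base point $x_1(\phi,z)$ produced by Lemma~\ref{Lemma4.6}, and then, for the last estimate, to bridge from $x_1(\phi,z)$ back to $x_0(\phi,z)$ via a Cauchy estimate on the $x$-dependence of the small-scale eigenvector. The key observation is that, writing $y:=x_1(\phi,z)-x_0(\phi,z)$, we have $|y|<\exp(-\gamma N_0/50)$ by \eqref{Lem4.6-(3)}, and since $\hat\beta<1$ we have $\gamma N_0/50\gg 3N_0^{\hat\beta}$ for $N_0$ large, so the smallness hypothesis $|y|<r_1'=\exp(-3N_0^{\hat\beta})$ of Lemma~\ref{Lemma4.4} is met. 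The condition $|z-z_1|<r_1'$ is built into $(\phi,z)\in\Pi_1''$.

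With this choice, $x_0(\phi,z)+y=x_1(\phi,z)$ and statements \eqref{Coro4.7-(1)}–\eqref{Coro4.7-(3)} fall out immediately. For \eqref{Coro4.7-(1)}, Lemma~\ref{Lemma4.4}\eqref{Lem4.4-(2)} gives the lower bound $\frac{1}{8}\exp(-N_0^{\hat\beta})$ on $|z_j^{[-N_1',N_1'']}(x_1)-z_{k_1}^{[-N_1',N_1'']}(x_1)|$; and since $z^{[-N_1',N_1'']}(x_1(\phi,z))=z$ by \eqref{Lem4.6-(1)} (after suppressing $k_1$), the first inequality follows, while the second uses $N_1=\lfloor N_0^{\hat A}\rfloor$ with $\hat A=\hat\beta^{-1}$, so $N_1^{\hat\delta}=N_0^{\hat\delta/\hat\beta}\gg N_0^{\hat\beta}$ by the standing relation $\hat\beta^2\ll\hat\delta$. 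Statements \eqref{Coro4.7-(2)} and \eqref{Coro4.7-(3)} are simply \eqref{Lem4.4-(3)} and \eqref{Lem4.4-(4)} evaluated at the same substitution.

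The only genuine work is \eqref{Coro4.7-(4)}, where I would insert the intermediate vector $u^{[-N_0',N_0'']}(x_1(\phi,z),\cdot)$ and apply the triangle inequality: the piece $\|u^{[-N_1',N_1'']}(x_1,\cdot)-u^{[-N_0',N_0'']}(x_1,\cdot)\|$ is already controlled by \eqref{Coro4.7-(3)}, and what remains is to bound the Lipschitz-type defect $\|u^{[-N_0',N_0'']}(x_1,\cdot)-u^{[-N_0',N_0'']}(x_0,\cdot)\|$. To do this I would invoke the inductive non-degeneracy in \eqref{(A)-(2)}: the eigenvalue $z^{[-N_0',N_0'']}$ is simple with spectral gap $\exp(-N_0^{\hat\delta})$, so the associated spectral projector, written as a contour integral $\frac{1}{2\pi i}\oint(\zeta-\mathcal{E}^{\beta,\eta}_{[-N_0',N_0'']}(x))^{-1}\,d\zeta$ around a small circle about $z^{[-N_0',N_0'']}(x_0(\phi,z))$, extends analytically in $x$ to a complex disk of radius $\exp(-2N_0^{\hat\delta})$ about $x_0(\phi,z)$, with resolvent norm at most $\exp(O(N_0^{\hat\delta}))$. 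Fixing the phase by, say, $\langle u^{[-N_0',N_0'']}(x,\cdot),u^{[-N_0',N_0'']}(x_0,\cdot)\rangle>0$, the eigenvector inherits this analyticity with the same sup-norm bound, and Cauchy's estimate then yields $\|u^{[-N_0',N_0'']}(x_1,\cdot)-u^{[-N_0',N_0'']}(x_0,\cdot)\|\lesssim \exp(O(N_0^{\hat\delta}))\cdot|x_1-x_0|\ll\exp(-\gamma N_0/500)$, combining $|x_1-x_0|<\exp(-\gamma N_0/50)$ with $\hat\delta\ll 1$. This Cauchy/spectral-projector estimate is the only delicate step; everything else is bookkeeping on the chain of inequalities supplied by Lemmas~\ref{Lemma4.4}–\ref{Lemma4.6}.
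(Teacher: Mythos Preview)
Your proposal is correct. For \eqref{Coro4.7-(1)}--\eqref{Coro4.7-(3)} your argument coincides with the paper's: both substitute $y=x_1(\phi,z)-x_0(\phi,z)$ into Lemma~\ref{Lemma4.4} and read off the conclusions, invoking \eqref{Lem4.6-(1)} to replace $z_{k_1}^{[-N_1',N_1'']}(x_1)$ by $z$ in \eqref{Coro4.7-(1)} and the relation $\hat\beta^2\ll\hat\delta$ for the second inequality there.

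For \eqref{Coro4.7-(4)} you take a genuinely different route. The paper does \emph{not} triangulate through $u^{[-N_0',N_0'']}(x_1(\phi,z))$; instead, following the template of Lemma~\ref{finite-scale-lemma-2}, it shows directly that $u^{[-N_1',N_1'']}$ is an approximate eigenvector of the small-scale operator $\mathcal{E}^{\beta,\eta}_{[-N_0',N_0'']}(x_0(\phi,z))$ with error $\tilde\varepsilon=\exp(-\gamma N_0/60)$ (combining the tail decay \eqref{Coro4.7-(2)} with the Lipschitz bound on $\mathcal{E}$ in $x$), then checks via Taylor expansion and \eqref{Lem4.6-(3)} that the corresponding approximate eigenvalue lies within $\hat\varepsilon=\exp(-\gamma N_0/70)$ of the genuine simple eigenvalue $z^{[-N_0',N_0'']}(x_0)$, and finally invokes Corollary~\ref{eigenvector}(b) to obtain $\|u^{[-N_1',N_1'']}-u^{[-N_0',N_0'']}(x_0)\|<\sqrt{2}\,\hat\varepsilon^{-1}\tilde\varepsilon<\exp(-\gamma N_0/500)$. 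Your approach instead splits off \eqref{Coro4.7-(3)} and handles the remaining leg $\|u^{[-N_0',N_0'']}(x_1)-u^{[-N_0',N_0'']}(x_0)\|$ by analytic perturbation of the rank-one spectral projector plus a Cauchy estimate, reusing the analyticity already packaged in Lemma~\ref{Lemma4.5}(i). Your route is arguably cleaner about constants and makes the $x$-dependence explicit; the paper's route avoids setting up the contour-integral projector and stays entirely within the abstract approximate-eigenvector machinery it has already built. In both cases the losses are of order $\exp(O(N_0^{\hat\delta}))$ against a gain of $\exp(-c\gamma N_0)$, so the target bound follows comfortably.
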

\begin{proof}
Applying Lemma \ref{Lemma4.4} with $y=x_{1}(\phi,z)-x_{0}(\phi,z)$, \eqref{Lem4.4-(2)}, \eqref{Lem4.4-(3)} and \eqref{Lem4.4-(4)} imply that the estimates \eqref{Coro4.7-(1)}, \eqref{Coro4.7-(2)} and \eqref{Coro4.7-(3)} hold. Then we need to verify the last estimate.
According to the proof of Lemma \ref{finite-scale-lemma-2}, one can obtain that
\begin{align*}
&\|(\mathcal{E}_{[-N_{0}',N_{0}'']}^{\beta,\eta}(x_{0}(\phi,z))-z^{[-N_{1}',N_{1}'']}(x_{0}(\phi,z)))u^{[-N_{1}',N_{1}'']}(x_{0}(\phi,z))\|\\
&\leq \|(\mathcal{E}_{[-N_{0}',N_{0}'']}^{\beta,\eta}(x_{0}(\phi,z))-\mathcal{E}_{[-N_{0}',N_{0}'']}^{\beta,\eta}(x_{1}(\phi,z)))u^{[-N_{1}',N_{1}'']}(x_{0}(\phi,z))\|\\
&\quad+\|(\mathcal{E}_{[-N_{0}',N_{0}'']}^{\beta,\eta}(x_{1}(\phi,z))-z^{[-N_{1}',N_{1}'']}(x_{0}(\phi,z)))u^{[-N_{1}',N_{1}'']}(x_{0}(\phi,z))\|\\
&\lesssim 8|x_{1}(\phi,z)-x_{0}(\phi,z)|+\exp(-\gamma(N_{0}-N_{0}^{1/2})/20)\\
&<\exp(-\gamma N_{0}/60)=:\tilde{\varepsilon}.
\end{align*}
Furthermore, with the aid of Taylor's formula and \eqref{Lem4.6-(3)}, we have that
\begin{align*}
&|z^{[-N_{1}',N_{1}'']}(x_{1}(\phi,z))-z^{[-N_{0}',N_{0}'']}(x_{0}(\phi,z))|\\
&|z^{[-N_{1}',N_{1}'']}(x_{1}(\phi,z))-z^{[-N_{0}',N_{0}'']}(x_{1}(\phi,z))|+
|z^{[-N_{0}',N_{0}'']}(x_{1}(\phi,z))-z^{[-N_{0}',N_{0}'']}(x_{0}(\phi,z))|\\
&<\exp(-\gamma N_{0}/40)+|\langle z^{[-N_{0}',N_{0}'']}(x_{0}(\phi,z),x_{1}(\phi,z)-x_{0}(\phi,z))\rangle|\\
&\quad +C \exp(3N_{0}^{\hat{\delta}})\|x_{1}(\phi,z)-x_{0}(\phi,z)\|^{2}\\
&<\exp(-\gamma N_{0}/70)=:\hat{\varepsilon}.
\end{align*}
According to Lemma \ref{eigenvector}, we have that
\begin{equation*}
\|u^{[-N_{1}',N_{1}'']}(x_{1}(\phi,z),\cdot)-u^{[-N_{0}',N_{0}'']}(x_{0}(\phi,z),\cdot)\|<\sqrt{2}\hat{\varepsilon}^{-1}\tilde{\varepsilon}<\exp(-\gamma N_{0}/500).
\end{equation*}
\end{proof}

Then we verify condition (C) with $s=1$. Let $\mathcal{I}_{0}'=\{\phi\in \mathbb{C}^{d-1}: |\phi-\phi_{0}|<r_{0}^{4}\}$.
\begin{lemma}\label{Lemma4.7}
Let $\hat{h}\in \mathbb{C}^{d}$, $\exp(-N_{1}^{\hat{\mu}})\leq \|\hat{h}\|<\exp(-N_{0}^{\hat{\mu}})$, and $z\in \mathcal{D}(z_{1},r_{1})\cap \partial\mathbb{D}$. Then for any $\hat{\nu}>0$,
\begin{equation*}
\mathrm{mes}\{\phi\in\mathcal{I}_{0}'/6:\log |z^{[-N_{0}',N_{0}'']}(x_{0}(\phi,z)+\hat{h})-z|\leq -N_{1}^{\hat{\mu}+\hat{\nu}} \}<C(d)\exp(-N_{1}^{\hat{\nu}/(d-1)}).
\end{equation*}
\end{lemma}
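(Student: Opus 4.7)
The plan is to apply Cartan's estimate (Lemma~\ref{Lemma3.23}) to the analytic function
\begin{equation*}
f(\phi) := z^{[-N_{0}',N_{0}'']}(x_{0}(\phi,z)+\hat{h}) - z,
\end{equation*}
viewed as a function of $\phi$ on a complex polydisk in $\mathbb{C}^{d-1}$. Since $\|\hat{h}\|<\exp(-N_{0}^{\hat{\mu}})\ll\exp(-2N_{0}^{\hat{\delta}})$, Lemma~\ref{Lemma4.5}(i) guarantees that $z^{[-N_{0}',N_{0}'']}$ remains analytic at the translated argument $x_{0}(\phi,z)+\hat{h}$, and combined with analyticity of $x_{0}$ in $\phi$ the function $f$ extends analytically to a complex neighborhood of $\mathcal{I}_{0}'$. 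A trivial bound then yields $M:=\sup\log|f|=O(1)$.

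To supply the lower bound required at a Cartan center, invoke condition (D) at scale $s=0$ with the unit direction $h_{0}=\hat{h}/\|\hat{h}\|$: outside a set of measure at most $\exp(-N_{0}^{2\hat{\delta}})$ one has
\begin{equation*}
|\langle\nabla z^{[-N_{0}',N_{0}'']}(x_{0}(\phi,z)),h_{0}\rangle|\geq\exp(-N_{0}^{\hat{\mu}}/2).
\end{equation*}
Since this exceptional measure is far smaller than $|\mathcal{I}_{0}'/12|\asymp r_{0}^{4(d-1)}=\exp(-4(d-1)N_{0}^{\hat{\delta}})$, a point $\phi_{*}\in\mathcal{I}_{0}'/12$ verifying the above bound exists. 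Using condition (A), $z^{[-N_{0}',N_{0}'']}(x_{0}(\phi_{*},z))=z$, so a first-order Taylor expansion gives
\begin{equation*}
f(\phi_{*})=\langle\nabla z^{[-N_{0}',N_{0}'']}(x_{0}(\phi_{*},z)),\hat{h}\rangle+O(\|\hat{h}\|^{2}\exp(2N_{0}^{\hat{\delta}})),
\end{equation*}
the quadratic remainder being controlled by Cauchy estimates on the disk of analyticity of radius $\exp(-2N_{0}^{\hat{\delta}})$. Because $\hat{\delta}\ll\hat{\mu}$ and $N_{1}^{\hat{\mu}}=N_{0}^{\hat{\mu}/\hat{\beta}}\gg N_{0}^{\hat{\mu}}$, the linear term dominates and $\log|f(\phi_{*})|\geq -2N_{1}^{\hat{\mu}}=:-m$.

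Finally, invoke Lemma~\ref{Lemma3.23} on a polydisk centered at $\phi_{*}$ of radius $R\sim r_{0}^{4}$, chosen so that $\mathcal{I}_{0}'/6$ is contained in the $\tfrac{1}{6}$-shrinking of this polydisk. Setting $H=cN_{1}^{\hat{\nu}}$ for a small constant $c=c(d)>0$, Cartan produces $\mathcal{B}\in\mathrm{Car}_{d-1}(H^{1/(d-1)},K)$ with $K=C_{d-1}H(M-m)$ such that for $\phi$ in the $\tfrac{1}{6}$-polydisk off $\mathcal{B}$,
\begin{equation*}
\log|f(\phi)|>M-C_{d-1}H(M-m)\geq -C_{d-1}(cN_{1}^{\hat{\nu}})(2N_{1}^{\hat{\mu}}+O(1))\geq -N_{1}^{\hat{\mu}+\hat{\nu}}.
\end{equation*}
Lemma~\ref{Cartan-measure} then bounds $\mathrm{mes}_{\mathbb{R}^{d-1}}(\mathcal{B}\cap\mathbb{R}^{d-1})\leq C(d)\exp(-H^{1/(d-1)})\leq C(d)\exp(-N_{1}^{\hat{\nu}/(d-1)})$ (absorbing the innocuous $c^{1/(d-1)}$ into $C(d)$), which is exactly the claimed measure bound.

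The main obstacle is the tight exponent bookkeeping dictated by the chain $\hat{\beta}^{2}\ll\hat{\delta}\ll\hat{\mu}\ll\hat{\beta}$: one must simultaneously verify (a) the translation $\hat{h}$ lies inside the analyticity disk of radius $\exp(-2N_{0}^{\hat{\delta}})$, (b) the Taylor remainder of size $\|\hat{h}\|^{2}\exp(2N_{0}^{\hat{\delta}})$ is dominated by the linear contribution of size $\|\hat{h}\|\exp(-N_{0}^{\hat{\mu}}/2)$, and (c) $N_{0}^{\hat{\mu}}\ll N_{1}^{\hat{\mu}}$ so that the lower bound $-2N_{1}^{\hat{\mu}}$ at $\phi_{*}$ combines cleanly with the Cartan parameter $H=N_{1}^{\hat{\nu}}$ to yield the target $-N_{1}^{\hat{\mu}+\hat{\nu}}$. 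A minor technical point is the polydisk centering: since Cartan needs the lower bound at the center, one enlarges the Cartan polydisk slightly so that its $\tfrac{1}{6}$-shrinking around the ``good'' point $\phi_{*}$ still covers all of $\mathcal{I}_{0}'/6$.
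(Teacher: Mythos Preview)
Your proposal is correct and follows essentially the same route as the paper: locate a ``good'' center via condition~(D), Taylor-expand using $z^{[-N_{0}',N_{0}'']}(x_{0}(\phi,z))=z$, and then feed the resulting lower bound $m=-2N_{1}^{\hat{\mu}}$ and trivial upper bound $M=0$ into Cartan's estimate with $H\sim N_{1}^{\hat{\nu}}$. The only discrepancy is that the paper invokes condition~(D) at the point $z_{1}\in\mathcal{D}(z_{0},r_{0})$ and then uses a Cauchy estimate on $\nabla z^{[-N_{0}',N_{0}'']}\circ x_{0}$ to pass from $z_{1}$ to the given $z\in\mathcal{D}(z_{1},r_{1})$, whereas you apply~(D) directly at $z$; since condition~(D) at scale $s=0$ is only asserted for $z\in\mathcal{D}(z_{0},r_{0})$ and $\mathcal{D}(z_{1},r_{1})$ need not be contained in $\mathcal{D}(z_{0},r_{0})$, the paper's detour through $z_{1}$ is the safer bookkeeping, but this is a cosmetic adjustment rather than a gap in your argument.
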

\begin{proof}
According to Cauchy estimates,
$$\Big|\frac{d^{2}}{d\hat{h}^{2}}z^{[-N_{0}',N_{0}'']}(x_{0}(\phi,z)+\hat{h})\Big|\leq \exp(3N_{0}^{\hat{\delta}}).$$
By Taylor's formula,
\begin{equation}\label{Lem4.7-(1)}
z^{[-N_{0}',N_{0}'']}(x_{0}(\phi,z)+\hat{h})-z^{[-N_{0}',N_{0}'']}(x_{0}(\phi,z))=\langle \nabla z^{[-N_{0}',N_{0}'']}(x_{0}(\phi,z)),\hat{h} \rangle+O(\exp(3N_{0}^{\hat{\delta}})\|\hat{h}\|^{2}).
\end{equation}
Take $h_{0}:=\frac{\hat{h}}{\|\hat{h}\|}$. Due to condition (D), there exists $\hat{\phi}_{0}$ satisfying $|\hat{\phi}_{0}-\hat{\phi}|\ll r_{0}^{4}$ such that
$$|\langle \nabla z^{[-N_{0}',N_{0}'']}(x_{0}(\hat{\phi}_{0},z_{1})),h_{0} \rangle|\geq \exp(-N_{0}^{\hat{\mu}}/2).$$
Using Cauchy estimates, one can obtain that
\begin{align*}
|\nabla z^{[-N_{0}',N_{0}'']}(x_{0}(\hat{\phi}_{0},z_{1}))-\nabla z^{[-N_{0}',N_{0}'']}(x_{0}(\hat{\phi}_{0},z))|
&\leq \exp(3N_{0}^{\hat{\delta}})|x_{0}(\hat{\phi}_{0},z_{1})-x_{0}(\hat{\phi}_{0},z)|\\
&\leq \exp(CN_{0}^{\hat{\delta}})|z-z_{1}|\\
&\leq \exp(CN_{0}^{\hat{\delta}})\exp(-N_{1}^{\hat{\delta}}).
\end{align*}
Thus, for any $z\in \mathcal{D}(z_{1},r_{1})\cap \partial \mathbb{D}$,
$$|\langle \nabla z^{[-N_{0}',N_{0}'']}(x_{0}(\hat{\phi}_{0},z)),h_{0} \rangle|\geq \exp(-N_{0}^{\hat{\mu}}/2).$$
Since $h_{0}:=\frac{\hat{h}}{\|\hat{h}\|}$,
$$|z^{[-N_{0}',N_{0}'']}(x_{0}(\hat{\phi}_{0},z)+\hat{h})-z|\geq \|\hat{h}\|\exp(-N_{0}^{\hat{\mu}}/2)\geq \exp(-2N_{1}^{\hat{\mu}}).$$
In other words,
$$\log|z^{[-N_{0}',N_{0}'']}(x_{0}(\hat{\phi}_{0},z)+\hat{h})-z|\geq -2N_{1}^{\hat{\mu}}=:m.$$
On the other hand,
$$\sup\log|z^{[-N_{0}',N_{0}'']}(x_{0}(\hat{\phi}_{0},z)+\hat{h})-z|\leq 0=:M.$$
Applying Cartan's estimate with $C_{d}=\frac{1}{2}$ and $H=N_{1}^{\hat{\nu}}$, there exists a set $\hat{\Omega}\subset \mathcal{I}_{0}'$, $\hat{\Omega}\in \mathrm{Car}_{d-1}(H^{\frac{1}{d-1}},K)$ with $K=C_{d}H(M-m)$, such that
\begin{equation*}
\log |z^{[-N_{0}',N_{0}'']}(x_{0}(\hat{\phi}_{0},z)+\hat{h})-z|>M-C_{d}H(M-m)>-N_{1}^{\hat{\mu}+\hat{\nu}}
\end{equation*}
for any $\phi\in \frac{1}{6}\mathcal{I}_{0}'\backslash \hat{\Omega}$.

Therefore, according to Lemma \ref{Cartan-measure}, we have that
\begin{equation*}
\mathrm{mes}\{\phi\in\mathcal{I}_{0}'/6:\log |z^{[-N_{0}',N_{0}'']}(x_{0}(\phi,z)+\hat{h})-z|\leq -N_{1}^{\hat{\mu}+\hat{\nu}} \}<C(d)\exp(-N_{1}^{\hat{\nu}/(d-1)}).
\end{equation*}
\end{proof}

\begin{lemma}\label{Lemma4.8}
Let $\hat{h}\in \mathbb{T}^{d}$ such that $\mathrm{dist}(\hat{h},\Upsilon_{1})\geq \exp(-N_{1}^{\hat{\mu}})$ and
$$\mathcal{B}_{1,z,\hat{h}}''=\{\phi\in \mathcal{I}_{1}'':\max_{|n'|,|n''|<N_{1}^{\frac{1}{2}}}\mathrm{dist}(\sigma(\mathcal{E}_{-N_{1}+n',N_{1}+n''}^{\beta,\eta}x_{1}(\phi,z)+\hat{h}),z)
<\exp(-N_{1}^{\hat{\beta}}/2)\},$$
where $\mathcal{I}''=\{\phi\in \mathbb{C}^{d-1}:|\phi-\phi_{1}|<\exp(-C_{0}N_{0}^{\hat{\beta}})\}$. Then for any $z\in \mathcal{D}(z_{1},r_{1})\cap \partial \mathbb{D}$, $\mathrm{mes}(\mathcal{B}_{1,z,\hat{h}}'')<\exp(-N_{1}^{2\hat{\delta}})$.
\end{lemma}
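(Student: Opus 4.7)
The plan is to verify condition (C) at scale $s=1$ by invoking the covering form of (LDT) (Lemma \ref{another-covering-form-LDT}) at scale 1. Concretely, for $\phi$ outside a suitably small exceptional set, I will exhibit, for every lattice point $m$ in the scale-1 interval $[-N_1+n',N_1+n'']$, a scale-0 sub-block $J_m\ni m$ such that $\mathrm{dist}(\sigma(\mathcal{E}_{J_m}^{\beta,\eta}(x_1(\phi,z)+\hat h)),z)\geq \exp(-N_0^{\hat\beta})$. Lemma \ref{another-covering-form-LDT} then delivers $\mathrm{dist}(\sigma(\mathcal{E}_{[-N_1+n',N_1+n'']}^{\beta,\eta}(x_1(\phi,z)+\hat h)),z)\geq \tfrac12\exp(-N_0^{\hat\beta})$, which by the exponent hierarchy \eqref{exponent-relation} dominates $\exp(-N_1^{\hat\beta}/2)$, the separation required by condition (C) at scale 1.

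For the reduction to scale-0 data, I use Lemma \ref{Lemma4.6} to replace $x_1(\phi,z)$ by $x_0(\phi,z)$ modulo a negligible error $\exp(-\gamma N_0/50)$, together with the translation identity $\mathcal{E}_{[a,b]}(y+m\omega)=\mathcal{E}_{[a+m,b+m]}(y)$ (which merely re-indexes the Verblunsky coefficients). The existence of a good $J_m=m+I$ for a scale-0 interval $I$ is then equivalent, for each $m$, to a spectral separation statement for $\mathcal{E}_I^{\beta,\eta}(x_0(\phi,z)+\hat h_m)$, where $\hat h_m:=\hat h+m\omega$.

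I then split each $|m|\leq N_1$ by the position of $\hat h_m$ relative to $\Upsilon_0$. In the \emph{resonant} case $\mathrm{dist}(\hat h_m,\Upsilon_0)<\exp(-N_0^{\hat\mu})$, I write $\hat h_m=n\omega+\tilde h_m$ with $|n|\leq 3N_0/2$; because $(n-m)\omega\in\Upsilon_1$ (since $|n-m|\leq 3N_0/2+N_1\leq 3N_1/2$) and $\mathrm{dist}(\hat h,\Upsilon_1)\geq\exp(-N_1^{\hat\mu})$, the residual shift satisfies $\exp(-N_1^{\hat\mu})\leq\|\tilde h_m\|<\exp(-N_0^{\hat\mu})$, exactly the hypothesis of Lemma \ref{Lemma4.7}; choosing $\hat\nu$ so that $\hat\nu/(d-1)>2\hat\delta$, the resulting per-$m$ bad measure is at most $\exp(-N_1^{\hat\nu/(d-1)})$. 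In the \emph{generic} case $\mathrm{dist}(\hat h_m,\Upsilon_0)\geq\exp(-N_0^{\hat\mu})$, one has in particular $\|\hat h_m\|\geq\exp(-N_0^{\hat\mu})$ (taking $0\in\Upsilon_0$); I re-run the Cartan-plus-(D) argument underlying Lemma \ref{Lemma4.7} using this lower bound on $\|\hat h_m\|$ to dominate the Taylor remainder $\exp(3N_0^{\hat\delta})\|\hat h_m\|^2$ by the linear term $|\langle\nabla z^{[-N_0',N_0'']},\hat h_m\rangle|$ supplied by condition (D), obtaining the same per-$m$ bound $\exp(-N_1^{\hat\nu/(d-1)})$.

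Summing the per-$m$ contributions over the $O(N_1)$ values of $m$ gives a total bad measure $\lesssim N_1\exp(-N_1^{\hat\nu/(d-1)})<\exp(-N_1^{2\hat\delta})$ once $\hat\nu>2(d-1)\hat\delta$, which is admissible by the hierarchy \eqref{exponent-relation}. The main obstacle I anticipate is precisely the generic-case estimate: condition (C) at scale 0 is the natural tool there, but its native bound $\exp(-N_0^{2\hat\delta})$ is catastrophically weak against an $O(N_1)$-fold union bound (since $N_1=N_0^{\hat A}$ with $\hat A\gg 1$), so I must supplement it by extending the Cartan argument of Lemma \ref{Lemma4.7} to all $\hat h_m$ with $\mathrm{dist}(\hat h_m,\Upsilon_0)\geq\exp(-N_0^{\hat\mu})$; coordinating the exponents so that Taylor's remainder stays subdominant to the gradient term uniformly across both regimes of $\hat h_m$ is the delicate bookkeeping that makes the union bound close.
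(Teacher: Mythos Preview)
Your resonant case is fine and matches the paper's use of Lemma \ref{Lemma4.7}. The gap is in your generic case. You propose to ``re-run the Cartan-plus-(D) argument of Lemma \ref{Lemma4.7}'' for $\hat h_m$ with $\mathrm{dist}(\hat h_m,\Upsilon_0)\geq\exp(-N_0^{\hat\mu})$, but that argument is predicated on the Taylor expansion
\[
z^{[-N_0',N_0'']}(x_0(\phi,z)+\hat h_m)-z=\langle\nabla z^{[-N_0',N_0'']}(x_0(\phi,z)),\hat h_m\rangle+O(\exp(3N_0^{\hat\delta})\|\hat h_m\|^2),
\]
and this expansion is only useful when $\|\hat h_m\|\ll\exp(-N_0^{\hat\mu}/2)$, so that the remainder is dominated by the linear term. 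A \emph{lower} bound on $\|\hat h_m\|$ does not help here; it is an \emph{upper} bound that is needed, and in the generic case none is available: $\hat h_m=\hat h+m\omega$ can have torus norm of order one. Worse, by Lemma \ref{Lemma4.5}(i) the eigenvalue branch $z^{[-N_0',N_0'']}$ is only defined as a single-valued analytic function on a ball of radius $\exp(-2N_0^{\hat\delta})$ around $x_0(\phi,z)$, so for large $\hat h_m$ the function you want to feed into Cartan does not even exist.

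The paper fixes this by applying Cartan not to the eigenvalue difference but to the \emph{determinant} $\varphi_{J_m}^{\beta,\eta}(x_0(\phi,z)+\hat h,z)$, which is globally analytic in $\phi$ regardless of the size of $\hat h_m$. Condition (C) at scale $0$ is used only to locate a single good $\phi_{0,m}$ where the spectral form of (LDT) gives $\log|\varphi_{J_m}^{\beta,\eta}|>|J_m|L-|J_m|^{1-\tau/2}$; together with the uniform upper bound from Remark \ref{Remark3.7}, this feeds Cartan's estimate and yields a per-$m$ bad set of measure $\lesssim\exp(-N_0^{\tau/(d-1)})$. Because $\tau$ is the fixed LDT exponent (not one of the tiny auxiliary exponents $\hat\delta,\hat\mu$), this bound survives the $O(N_1)$-fold union and dominates $\exp(-N_1^{2\hat\delta})$. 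That is the missing ingredient in your plan.
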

\begin{proof}
Take $|m_{1}|\leq3N_{1}/2$, $h_{1}\in\mathbb{R}^{d}$ such that
$$\mathrm{dist}(\hat{h},\Upsilon_{1})=\|h_{1}\|,\quad h_{1}=\hat{h}-m_{1}\omega\, (\mathrm{mod}\,\mathbb{Z}^{d}).$$
Since $-m+[-3N_{0}/2,3N_{0}/2]\subset[-3N_{1}/2,3N_{1}/2]$ for any $m\in [-N_{1},N_{1}]$, one can obtain that
\begin{equation}\label{Lem4.8-(1)}
\mathrm{dist}(\hat{h}+m\omega,\Upsilon_{0})=\mathrm{dist}(\hat{h},-m\omega+\Upsilon_{0})\geq \mathrm{dist}(\hat{h},\Upsilon_{1})=\|h_{1}\|.
\end{equation}
Then we consider the following two cases:\\
{\bf{Case 1.}} $|m+m_{1}|>3N_{0}/2$\\
In this case, according to the standard Diophantine condition, we get that
\begin{equation}\label{Lem4.8-(2)}
\|\hat{h}+m\omega-n\omega\|=\|h_{1}+(m+m_{1}-n)\omega\|\geq\|(m+m_{1}-n)\omega\|-\|h_{1}\|\geq p(CN_{1})^{-q}-\|h_{1}\|
\end{equation}
for any $n$ satisfying $n\omega\in \Upsilon_{0}$.

If $\|h_{1}\|\geq \exp(-N_{0}^{\hat{\mu}})$, then $\mathrm{dist}(\hat{h}+m\omega,\Upsilon_{0})\geq \exp(-N_{0}^{\hat{\mu}})$ follows from \eqref{Lem4.8-(1)}.

If $\|h_{1}\|< \exp(-N_{0}^{\hat{\mu}})$, then $\mathrm{dist}(\hat{h}+m\omega,\Upsilon_{0})\geq \exp(-N_{0}^{\hat{\mu}})$ follows from \eqref{Lem4.8-(2)}.

Write $\hat{h}+m\omega=(m+\frac{\hat{h}}{\omega})\omega=:\tilde{h}\omega$. According to condition (C), for each $\phi\in \mathcal{I}_{0}\backslash \mathcal{B}_{0,z_{1},\lceil\tilde{h}\rceil}$ with $\mathcal{B}_{0,z_{1},\lceil\tilde{h}\rceil}$ as in Lemma \ref{Lemma4.3}, there exist $|n'|, |n''|<N_{0}^{\frac{1}{2}}$ such that
$$\mathrm{dist}(\sigma(\mathcal{E}_{J_{m}(\phi)}^{\beta,\eta}(x_{0}(\phi,z_{1})+\hat{h})),z_{1})\geq\exp(-N_{0}^{\hat{\beta}}/2),$$
with $J_{m}(\phi)=m+[-N_{0}+n',N_{0}+n'']$.

Thus, for any $z\in\mathcal{D}(z_{1},r_{1})\cap\partial\mathbb{D}$, we have that
$$\mathrm{dist}(\sigma(\mathcal{E}_{J_{m}(\phi)}^{\beta,\eta}(x_{0}(\phi,z)+\hat{h})),z)\geq\exp(-N_{0}^{\nu/2}).$$
Since it follows from condition (C) that $\mathrm{mes}(\mathcal{B}_{0,z_{1},\lceil\tilde{h}\rceil})<\exp(-N_{0}^{2\hat{\delta}})$, there exists $\phi_{0,m}\in \mathcal{I}_{0}\backslash\mathcal{B}_{0,z_{1},\lceil\tilde{h}\rceil}$, $|\phi_{0,m}-\phi_{0}|\ll r_{0}^{4}$. Define $J_{m}:=J_{m}(\phi_{0,m})$. Applying the spectral form of (LDT), we have that
$$\log|\varphi_{J_{m}}^{\beta,\eta}(x_{0}(\phi_{0,m},z)+\hat{h},z)|>|J_{m}|L_{|J_{m}|}(z)-|J_{m}|^{1-\tau/2}=:m.$$
On the other hand, from Remark \ref{Remark3.7}, we have that
$$\sup_{\phi\in \mathcal{I}_{0}'}\log|\varphi_{J_{m}}^{\beta,\eta}(x_{0}(\phi,z)+\hat{h},z)|\leq |J_{m}|L_{|J_{m}|}(z)+C|J_{m}|^{1-\tau}=:M.$$
Using Cartan's estimate to $\varphi_{J_{m}}^{\beta,\eta}(x_{0}(\phi,z)+\hat{h},z)$ with $H=N_{0}^{\tau}$, $C_{d-1}H=1$, there exists a set $\mathcal{B}_{0,z,m}'\subset\mathcal{I}_{0}'$, $\mathcal{B}_{0,z,m}'\in \mathrm{Car}_{d-1}(H^{\frac{1}{d-1}},K)$ ($K=C_{d-1}H(M-m)$) such that
\begin{align*}
\log|\varphi_{J_{m}}^{\beta,\eta}(x_{0}(\phi,z)+\hat{h},z)|\geq M-C_{d-1}H(M-m)&\geq|J_{m}|L_{|J_{m}|}(z)-|J_{m}|^{1-\tau/4}\\
&>|J_{m}|L(z)-|J_{m}|^{1-\tau/4}
\end{align*}
for any $\phi\in \frac{1}{6}\mathcal{I}_{0}'\backslash \mathcal{B}_{0,z,m}'$. Thus, one can obtain that
\begin{equation}\label{Lem4.8-(3)}
\mathrm{mes}\{\phi\in \frac{1}{6}\mathcal{I}_{0}':\log|\varphi_{J_{m}}^{\beta,\eta}(x_{0}(\phi,z)+\hat{h},z)|<|J_{m}|L(z)-|J_{m}|^{1-\tau/4} \}<C(d)\exp(-N_{0}^{\tau/(d-1)})
\end{equation}
and
$$\mathcal{B}_{0,z,m}'=\{\phi\in \frac{1}{6}\mathcal{I}_{0}':\log|\varphi_{J_{m}}^{\beta,\eta}(x_{0}(\phi,z)+\hat{h},z)|<|J_{m}|L(z)-|J_{m}|^{1-\tau/4} \}.$$
Let $\mathcal{B}_{0,z,N_{1}}'=\mathop{\cup}\limits_{-N_{1}\leq m \leq N_{1},|m+m_{1}|>3N_{0}/2}\mathcal{B}_{0,z,m}'$. We have that
$$\mathrm{mes}(\mathcal{B}_{0,z,N_{1}}')\leq 2N_{1}C(d)\exp(-N_{0}^{\tau/(d-1)})<\exp(-N_{0}^{\tau/(d-1)}/2)\ll\exp(-N_{1}^{2\hat{\delta}}).$$
{\bf{Case 2.}} $|m+m_{1}|\leq3N_{0}/2$\\
In this case, we only need to consider $m=-m_{1}$. Then for any $m_{1}\in[-N_{1},N_{1}]$, \eqref{Lem4.8-(1)} holds. Similarly, we consider the following two cases.

If $\|h_{1}\|\geq \exp(-N_{0}^{\hat{\mu}})$, then $\mathrm{dist}(\hat{h}+m\omega,\Upsilon_{0})\geq \exp(-N_{0}^{\hat{\mu}})$. Besides, there exists an interval $J_{-m}$, such that \eqref{Lem4.8-(3)} holds with $m=-m_{1}$. In this case, we let $\mathcal{B}_{0,z,-m_{1}}'$ be the set from \eqref{Lem4.8-(3)}.

Suppose $\|h_{1}\|< \exp(-N_{0}^{\hat{\mu}})$. Let $J_{-m_{1}}:=-m_{1}+[-N_{0}',N_{0}'']$. Then one can obtain that
\begin{equation*}
\sigma(\mathcal{E}_{J_{-m_{1}}}^{\beta,\eta}(x+\hat{h}))=\sigma(\mathcal{E}_{[-N_{0}',N_{0}'']}^{\beta,\eta}(x
+\hat{h}-m_{1}\omega))=\sigma(\mathcal{E}_{[-N_{0}',N_{0}'']}^{\beta,\eta}(x+h_{1})).
\end{equation*}
In this case, we let
\begin{equation}\label{Lem4.8-(4)}
\mathcal{B}_{0,z,-m_{1}}'=\{\phi\in \frac{1}{6}\mathcal{I}_{0}':|z^{[-N_{0}',N_{0}]''}(x_{0}(\phi,z)+h_{1})-z|\leq\exp(-N_{1}^{\hat{\mu}+\hat{\nu}})\}
\end{equation}
with $\hat{\nu}=3(d-1)\hat{\delta}$. According to Lemma \ref{Lemma4.7},
$$\mathrm{mes}(\mathcal{B}_{0,z,-m_{1}}')<C(d)\exp(-N_{1}^{\hat{\nu}/(d-1)})<\exp(-N_{1}^{2\hat{\delta}}).$$
Then from \eqref{Lem4.8-(4)}, one can obtain that
$$\mathrm{dist}(\sigma(\mathcal{E}_{J_{-m_{1}}}^{\beta,\eta}(x_{0}(\phi,z)+\hat{h})),z)>\exp(-N_{1}^{\hat{\mu}+\hat{\nu}})>\exp(-|J_{-m_{1}}|^{\nu/2})$$
for any $\phi\in \frac{1}{6}\mathcal{I}_{0}'\backslash \mathcal{B}_{0,z,-m_{1}}'$.

According to the spectral form of (LDT), we get that
$$\log|\varphi_{J_{m_{1}}}^{\beta,\eta}(x_{0}(\phi,z)+\hat{h})|>|J_{-m_{1}}|L(z)-|J_{-m_{1}}|^{1-\tau/2} $$
for any $\phi\in \frac{1}{6}\mathcal{I}_{0}'\backslash \mathcal{B}_{0,z,-m_{1}}'$.

Thus, in either case we can find an interval $J_{-m_{1}}$ and get a similar conclusion.

Let
\begin{align*}
I:=
\begin{cases}
J_{-m_{1}}\cup(\mathop{\cup}\limits_{-N_{1}\leq m\leq N_{1},|m+m_{1}|>3N_{0}/2}J_{m}), \quad &m_{1}\in[-N_{1},N_{1}], \\
\mathop{\cup}\limits_{-N_{1}+2N_{0}\leq m\leq N_{1}-2N_{0}}J_{m},& m_{1}\notin[-N_{1},N_{1}].
\end{cases}
\end{align*}
According to the covering form of (LDT), we have that
$$\mathrm{dist}(\sigma(\mathcal{E}_{I}^{\beta,\eta}(x_{0}(\phi,z)+\hat{h})),z)\geq \exp(-2\max_{m}|J_{m}|^{1-\tau/4})>\exp(-4N_{0}^{1-\tau/4})$$
for any $\phi\in  \frac{1}{6}\mathcal{I}_{0}'\backslash (\mathcal{B}_{0,z,N_{1}}'\cup\mathcal{B}_{0,z,-m_{1}}')$. Due to \eqref{Lem4.6-(3)},
$$\mathrm{dist}(\sigma(\mathcal{E}_{I}^{\beta,\eta}(x_{1}(\phi,z)+\hat{h})),z)\gtrsim \exp(-4N_{0}^{1-\tau/4})\gg \exp(-N_{1}^{\hat{\beta}}/2)$$
for any $\phi\in  \mathcal{I}_{1}''\backslash (\mathcal{B}_{0,z,N_{1}}'\cup\mathcal{B}_{0,z,-m_{1}}')$.

Therefore, $\mathcal{B}_{1,z,\hat{h}}''\subset(\mathcal{B}_{0,z,N_{1}}'\cup\mathcal{B}_{0,z,-m_{1}}')$ and the statement follows.

\end{proof}

\begin{lemma}\label{Lemma4.10}
For any $(\phi,z)\in \Pi_{1}''$,
\begin{equation*}
|\nabla z^{[-N_{1}',N_{1}'']}(x_{1}(\phi,z))-\nabla z^{[-N_{0}',N_{0}'']}(x_{0}(\phi,z))|<\exp(-c_{0}\gamma N_{0})
\end{equation*}
for some suitable $c_{0}=c_{0}(d)$.
\end{lemma}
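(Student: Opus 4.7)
The plan is to express each gradient via the Hellmann--Feynman formula for simple eigenvalues of a smooth family of unitary matrices. Denote $\mathcal{E}_i\defeq \mathcal{E}^{\beta,\eta}_{[-N_i',N_i'']}$, $x_i\defeq x_i(\phi,z)$, $u_i\defeq u^{[-N_i',N_i'']}(x_i,\cdot)$, and observe that the eigenvalue $z^{[-N_i',N_i'']}(x_i)$ is simple by \eqref{Coro4.7-(1)}. Differentiating $\mathcal{E}_i(x)u(x)=z^{[-N_i',N_i'']}(x)u(x)$ at $x=x_i$ and pairing with $u_i$, the cross-term satisfies $\langle u_i,\mathcal{E}_i u_i'\rangle=\langle\mathcal{E}_i^* u_i,u_i'\rangle=\bar z\langle u_i,u_i'\rangle$ since $\mathcal{E}_i$ is unitary and $|z|=1$; this exactly cancels $z\langle u_i,u_i'\rangle$ on the right, leaving the clean identity
\[\nabla z^{[-N_i',N_i'']}(x_i)=\bigl\langle u_i,(\nabla\mathcal{E}_i)(x_i) u_i\bigr\rangle,\qquad i=0,1.\]

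After extending $u_0$ by zero to $\ell^2([-N_1',N_1''])$, I would split the target difference as a telescoping sum of three pieces:
\begin{align*}
&\nabla z^{[-N_1',N_1'']}(x_1)-\nabla z^{[-N_0',N_0'']}(x_0)\\
&=\bigl[\langle u_1,(\nabla\mathcal{E}_1)(x_1) u_1\rangle-\langle u_0,(\nabla\mathcal{E}_1)(x_1) u_0\rangle\bigr]\\
&\quad+\langle u_0,[(\nabla\mathcal{E}_1)(x_1)-(\nabla\mathcal{E}_1)(x_0)] u_0\rangle\\
&\quad+\langle u_0,[(\nabla\mathcal{E}_1)(x_0)-(\nabla\mathcal{E}_0)(x_0)] u_0\rangle,
\end{align*}
which I think of as the \emph{eigenvector change}, the \emph{phase shift}, and the \emph{window enlargement} contributions.

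Each piece is then controlled by a single input already at hand. For the first, $|\langle u_1,Au_1\rangle-\langle u_0,Au_0\rangle|\leq 2\|A\|\,\|u_1-u_0\|$; Cauchy estimates applied entrywise on $\mathbb{T}^d_h$ give $\|(\nabla\mathcal{E}_1)(x_1)\|\lesssim 1$, while \eqref{Coro4.7-(4)} supplies $\|u_1-u_0\|<\exp(-\gamma N_0/500)$. For the second, a Lipschitz estimate $\|(\nabla\mathcal{E}_1)(x_1)-(\nabla\mathcal{E}_1)(x_0)\|\lesssim |x_1-x_0|$ (again by Cauchy, since the second $x$-derivatives of the entries of $\mathcal{E}_1$ are uniformly bounded), combined with \eqref{Lem4.6-(3)}, gives a bound $\lesssim\exp(-\gamma N_0/50)$. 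The third piece is the one that uses the CMV structure: because $\mathcal{E}$ is five--diagonal and $\mathcal{E}_0$ differs from $\mathcal{E}_1$ only by the replacement of the interior Verblunsky coefficients near the sites $-N_0',N_0''$ by the boundary values $\beta,\eta$, the matrix $(\nabla\mathcal{E}_1)(x_0)-(\nabla\mathcal{E}_0)(x_0)$ is supported in an $O(1)$--neighborhood of those two boundary sites with bounded entries; the exponential localization \eqref{(B)} then gives $|u_0(m)|\leq\exp(-\gamma|m|/10)$ at those sites, yielding a bound $\lesssim\exp(-\gamma N_0/5)$ for this piece.

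Summing the three contributions gives the claim for any $c_0<1/500$. The main obstacle is really just establishing the Hellmann--Feynman identity in the unitary (rather than self--adjoint) setting; this is not difficult, but it is the one step where the familiar reflex from Schr\"odinger theory must be adjusted, and it is crucial that $\mathcal{E}_i^* u_i=\bar z u_i$ kills the gauge-dependent term $z\langle u_i,u_i'\rangle$ cleanly. Beyond this point, the argument is just a routine combination of the smallness of $\|u_1-u_0\|$ from Corollary~\ref{Coro4.7}, the smallness of $|x_1-x_0|$ from Lemma~\ref{Lemma4.6}, and the exponential localization from condition~(B), with the five--diagonal structure being precisely what reduces the window-mismatch correction to a boundary contribution.
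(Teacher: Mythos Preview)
Your approach is correct and takes a genuinely different route from the paper. The paper works purely complex-analytically: it inserts the intermediate term $\nabla z^{[-N_0',N_0'']}(x_1)$, bounds $|\nabla z^{[-N_0',N_0'']}(x_1)-\nabla z^{[-N_0',N_0'']}(x_0)|$ by a Cauchy estimate for the second derivative of $z^{[-N_0',N_0'']}$ on the disk from Lemma~\ref{Lemma4.5}(i) together with \eqref{Lem4.6-(3)}, and bounds $|\nabla z^{[-N_1',N_1'']}(x_1)-\nabla z^{[-N_0',N_0'']}(x_1)|$ by a Cauchy estimate applied to the difference $z^{[-N_1',N_1'']}-z^{[-N_0',N_0'']}$ on the smaller disk from Lemma~\ref{Lemma4.5}(ii), using \eqref{Lem4.4-(1)}. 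Your Hellmann--Feynman route trades that analyticity machinery for the eigenvector closeness \eqref{Coro4.7-(4)}, the localization \eqref{(B)}, and the five-diagonal structure of $\mathcal{E}$; it is more elementary (no Weierstrass preparation behind the scenes) and does not rely on Lemma~\ref{Lemma4.5} at all, at the cost of handling the boundary correction (your third piece) by hand. One small slip in your derivation: with the conjugate-linear-in-first-slot convention used in the paper, the cross-term computes to $\langle\mathcal{E}_i^*u_i,u_i'\rangle=\langle\bar z\,u_i,u_i'\rangle=z\langle u_i,u_i'\rangle$, not $\bar z\langle u_i,u_i'\rangle$; the cancellation with the right-hand side is therefore correct, but the stated intermediate coefficient is not.
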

\begin{proof}
Applying Cauchy estimates and \eqref{Lem4.6-(3)}, one can obtain that
\begin{align*}
|\nabla z^{[-N_{0}',N_{0}'']}(x_{1}(\phi,z))-\nabla z^{[-N_{0}',N_{0}'']}(x_{0}(\phi,z))|&\leq \exp(3N_{0}^{\hat{\delta}})|x_{1}(\phi,z)-x_{0}(\phi,z)|\\
&\leq\exp(-\gamma N_{0}/60).
\end{align*}
Combining \eqref{Lem4.4-(1)} and Cauchy estimates, we have that
\begin{equation*}
|\nabla z^{[-N_{1}',N_{1}'']}(x_{1}(\phi,z))-\nabla z^{[-N_{0}',N_{0}'']}(x_{1}(\phi,z))|\leq \exp(-c(d)\gamma N_{0})
\end{equation*}
for some appropriate $c(d)$.

Therefore, for some suitable $c_{0}=c_{0}(d)$, we have that
\begin{align*}
&|\nabla z^{[-N_{1}',N_{1}'']}(x_{1}(\phi,z))-\nabla z^{[-N_{0}',N_{0}'']}(x_{0}(\phi,z))|\\
&\leq |\nabla z^{[-N_{1}',N_{1}'']}(x_{1}(\phi,z))-\nabla z^{[-N_{0}',N_{0}'']}(x_{1}(\phi,z))|+|\nabla z^{[-N_{0}',N_{0}'']}(x_{1}(\phi,z))\\
&\quad -\nabla z^{[-N_{0}',N_{0}'']}(x_{0}(\phi,z))|\\
&<\exp(-c_{d}\gamma N_{0})+\exp(-\gamma N_{0}/60)<\exp(-c_{0}\gamma N_{0}).
\end{align*}

\end{proof}

\begin{lemma}\label{Lemma4.11}
Let $h_{0}\in \mathbb{C}^{d}$ be a unit vector. Then for any $z\in \mathcal{D}(z_{1},r_{1})\cap\partial\mathbb{D}$,
\begin{equation*}
\mathrm{mes}\{\phi\in \mathcal{I}_{1}'':\log|\langle\nabla z^{[-N_{1}',N_{1}'']}(x_{1}(\phi,z)),h_{0}\rangle|<-N_{1}^{\hat{\mu}}/2\}<\exp(-N_{1}^{2\hat{\delta}}).
\end{equation*}
\end{lemma}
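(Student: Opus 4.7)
The plan is to reduce the estimate at scale $N_1$ to one at scale $N_0$ via Lemma \ref{Lemma4.10}. Set $f(\phi) = \langle \nabla z^{[-N_1',N_1'']}(x_1(\phi,z)), h_0\rangle$ and $g(\phi) = \langle \nabla z^{[-N_0',N_0'']}(x_0(\phi,z)), h_0\rangle$. By Lemma \ref{Lemma4.10} one has $|f(\phi) - g(\phi)| < \exp(-c_0 \gamma N_0)$ for every $\phi \in \mathcal{I}_1''$, which is vastly smaller than the target threshold $\exp(-N_1^{\hat{\mu}}/2)$, since $\hat{\mu} \ll \hat{\beta}$ forces $N_1^{\hat{\mu}} = N_0^{\hat{\mu}/\hat{\beta}} \ll c_0\gamma N_0$. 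Consequently $\{\phi \in \mathcal{I}_1'' : \log|f(\phi)| < -N_1^{\hat{\mu}}/2\} \subset \{\phi \in \mathcal{I}_1'' : \log|g(\phi)| < -N_1^{\hat{\mu}}/2 + \log 2\}$, and it suffices to control the measure of the latter.

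To that end I would apply Cartan's estimate (Lemma \ref{Lemma3.23}) to the analytic function $g$. Analyticity of $z^{[-N_0',N_0'']}$ on $\{|\hat{x} - x_0(\phi,z)| < \exp(-2N_0^{\hat{\delta}})\}$ from Lemma \ref{Lemma4.5}(i), combined with Cauchy estimates, produces $|g(\phi)| \leq \exp(3N_0^{\hat{\delta}})$, yielding the upper bound $M = 3N_0^{\hat{\delta}}$. For the lower bound, condition (D) at $s=0$ says that $\{\phi \in \mathcal{I}_0 : |g(\phi)| < \exp(-N_0^{\hat{\mu}}/2)\}$ has measure less than $\exp(-N_0^{2\hat{\delta}})$, which is much smaller than the $(d-1)$-dimensional volume of a ball of radius $r_0^4/100$ around $\phi_1$ (since $4(d-1)N_0^{\hat{\delta}} \ll N_0^{2\hat{\delta}}$). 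I may therefore pick $\phi_0^* \in \mathcal{I}_0$ with $|\phi_0^* - \phi_1| < r_0^4/100$ and $|g(\phi_0^*)| \geq \exp(-N_0^{\hat{\mu}}/2)$, giving $m = -N_0^{\hat{\mu}}/2$. Applying Cartan on the polydisk $\mathcal{P}'$ centered at $\phi_0^*$ of radius $r_0^4$ with parameter $H = (N_1^{2\hat{\delta}}+1)^{d-1}$ produces a set $\mathcal{B}' \in \mathrm{Car}_{d-1}(H^{1/(d-1)},K)$ of measure less than $\exp(-N_1^{2\hat{\delta}})$ (via Lemma \ref{Cartan-measure}), outside of which $\log|g(\phi)| > M - C_{d-1}H(M-m)$.

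Since $\exp(-C_0 N_0^{\hat{\beta}}) \ll r_0^4$, the polydisk $\mathcal{I}_1''$ lies inside $\tfrac{1}{6}\mathcal{P}'$. The hierarchy $\hat{\beta}^2 \ll \hat{\delta} \ll \hat{\mu} \ll \hat{\beta}$ in \eqref{exponent-relation} together with $N_1 = N_0^{1/\hat{\beta}}$ keeps the Cartan loss $C_{d-1}H(M-m)$, which is of order $N_0^{2\hat{\delta}(d-1)/\hat{\beta} + \hat{\mu}}$, strictly below $N_1^{\hat{\mu}}/2 = N_0^{\hat{\mu}/\hat{\beta}}/2$; the key inequality $2\hat{\delta}(d-1) < \hat{\mu}(1-\hat{\beta})$ follows at once from $\hat{\mu} \gg \hat{\delta}$. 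Therefore $\log|g(\phi)| > -N_1^{\hat{\mu}}/2 + \log 2$ on $\mathcal{I}_1'' \setminus \mathcal{B}'$, and combining with the $g$-to-$f$ transfer above yields the claimed estimate on $f$. The principal obstacle is this interlocking chain of scale comparisons — simultaneously verifying that $\mathcal{P}'$ sits inside the analyticity domain of $g$, that $\tfrac{1}{6}\mathcal{P}' \supset \mathcal{I}_1''$, and that $H$ is large enough to give Cartan measure $\exp(-N_1^{2\hat{\delta}})$ while the loss $C_{d-1}H(M-m)$ remains below $N_1^{\hat{\mu}}/2$ — all of which reduce to \eqref{exponent-relation} but require careful bookkeeping.
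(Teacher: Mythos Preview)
Your proposal is correct and follows essentially the same route as the paper: both find a good point via condition (D) at scale $s=0$, apply Cartan's estimate (Lemma~\ref{Lemma3.23}) to the scale-$N_0$ directional derivative $g$, and invoke Lemma~\ref{Lemma4.10} to transfer the resulting lower bound from $g$ to $f$. The only cosmetic differences are that the paper takes the cruder upper bound $M=0$ (rather than your $M=3N_0^{\hat{\delta}}$), chooses $H=N_0^{\nu_0}$ with $\nu_0=3(d-1)\hat{\beta}^{-1}\hat{\delta}$ (comparable in order to your $H$), and performs the Lemma~\ref{Lemma4.10} transfer after the Cartan step rather than before; none of these changes the argument.
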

\begin{proof}
Due to condition (D), there exists $\hat{\phi}_{0}$ satisfying $|\hat{\phi}_{0}-\phi_{0}|\ll r_{0}^{4}$ such that
\begin{equation*}
\log|\langle\nabla z^{[-N_{0}',N_{0}'']}(x_{0}(\hat{\phi}_{0},z)),h_{0}\rangle|\geq-N_{0}^{\hat{\mu}}/2=:m.
\end{equation*}
On the other hand,
\begin{equation*}
\sup\log|\langle\nabla z^{[-N_{0}',N_{0}'']}(x_{0}(\phi,z)),h_{0}\rangle|\leq 0=:M.
\end{equation*}
Applying Cartan's estimate with $C_{d}=2$ and $H=N_{0}^{\nu_{0}}$ ($\nu_{0}=3(d-1)\hat{\beta}^{-1}\hat{\delta}$), there exists a set $\tilde{\Omega}\subset\mathcal{I}_{0}'$, $\tilde{\Omega}\in \mathrm{Car}_{d-1}(H^{\frac{1}{d-1}},K)$ with $K=C_{d}H(M-m)$, such that
\begin{equation*}
\log|\langle\nabla z^{[-N_{0}',N_{0}'']}(x_{0}(\phi,z)),h_{0}\rangle|>M-C_{d}H(M-m)=-N_{0}^{\hat{\mu}+\nu_{0}}
\end{equation*}
for any $\phi\in \frac{1}{6}\mathcal{I}_{0}'\backslash \tilde{\Omega}$.

Thus, using Lemma \ref{Cartan-measure}, we have that
\begin{align*}
\mathrm{mes}\{\phi\in\frac{1}{6}\mathcal{I}_{0}': \log|\langle\nabla z^{[-N_{0}',N_{0}'']}(x_{0}(\phi,z)),h_{0}\rangle|\leq-N_{0}^{\hat{\mu}+\nu_{0}}\}&<C(d)\exp(-N_{0}^{\nu_{0}/(d-1)})\\
&<\exp(-N_{1}^{2\hat{\delta}}).
\end{align*}
In addition, $\tilde{\Omega}=\{\phi\in\frac{1}{6}\mathcal{I}_{0}': \log|\langle\nabla z^{[-N_{0}',N_{0}'']}(x_{0}(\phi,z)),h_{0}\rangle|\leq-N_{0}^{\hat{\mu}+\nu_{0}}\}$.

Since $\mathcal{I}_{1}''=\{\phi\in\mathbb{R}^{d-1}:|\phi-\phi_{1}|<\exp(-C_{0}N_{0}^{\hat{\beta}})\}$, $\hat{\delta}\ll \hat{\beta}$, then $\mathcal{I}_{1}''\subset\frac{1}{6}\mathcal{I}_{0}'$. It follows from Lemma \ref{Lemma4.10} that
\begin{equation*}
\log|\langle\nabla z^{[-N_{0}',N_{0}'']}(x_{0}(\phi,z)),h_{0}\rangle|\geq-2N_{0}^{\hat{\mu}+\nu_{0}}\geq -N_{1}^{\hat{\mu}}/2
\end{equation*}
holds for any $\phi\in \mathcal{I}_{1}''\backslash\tilde{\Omega}$. Therefore, the conclusion follows.
\end{proof}

In the end of this section, we prove Theorem \ref{bulk-theorem}.
\begin{proof}
The existence of $\phi_{1}$ was obtained in \ref{Lemma4.4}. Combining the definition of the sets $\mathcal{I}_{0}'$, $\mathcal{I}_{1}''$ and Lemma \ref{Lemma4.11}, we have  that $\mathcal{I}_{1}\Subset \mathcal{I}_{1}''\subset \frac{1}{6}\mathcal{I}_{0}'\subset \mathcal{I}_{0}$.  Conditions (A), (B) and the estimates \eqref{bulk-theorem-(1)},  \eqref{bulk-theorem-(2)} follow from Lemma \ref{Lemma4.6} and Corollary \ref{Coro4.7}. Condition (C) follows from Lemma \ref{Lemma4.8}. Condition (D) follows from Lemma \ref{Lemma4.11}.
\end{proof}

\section{Proof of the main result}
The following proposition is a more detailed version of Theorem \ref{main-theorem}.
\begin{prop}\label{Prop5.1}
Assume the notation of the inductive conditions (A)-(D) from Section 4. Let $z_{0}\in \partial\mathbb{D}$, $N_{0}\geq 1$. Assume that $L(z)>\gamma>0$ for $z\in\mathcal{D}(z_{0},2r_{0})\cap\partial \mathbb{D}$ with $r_{0}=\exp(-N_{0}^{\hat{\delta}})$. If $N_{0}\geq C(p,q,\hat{\beta})$ and conditions (A)-(D) hold with $s=0$ for the given $z_{0}$, then $\bar{\mathcal{D}}(z_{0},r_{0})\cap\partial \mathbb{D}\subset\sigma(\mathcal{E}(x))$, where $\bar{\mathcal{D}}(z_{0},r_{0})=\{z\in \mathbb{C}: |z-z_{0}|\leq r_{0}\}$.
\end{prop}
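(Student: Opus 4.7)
The plan is to produce, for each target point $z^{*} \in \bar{\mathcal{D}}(z_0, r_0) \cap \partial\mathbb{D}$, a Weyl sequence $\{\psi_s\}$ for the extended CMV operator $\mathcal{E}(x)$ with approximate eigenvalue $z^{*}$, by glueing together the finite-scale eigenpairs produced by iterating Theorem \ref{bulk-theorem}, and then using ergodicity of the base dynamics to free the conclusion from the particular $x$.

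First, I fix a target $z^{*} \in \bar{\mathcal{D}}(z_0, r_0) \cap \partial\mathbb{D}$ and choose inductively $z_s \in \mathcal{D}(z_{s-1}, r_{s-1}) \cap \partial\mathbb{D}$ with $z_s \to z^{*}$; this is possible because the radii $r_s = \exp(-N_s^{\hat{\delta}})$ shrink super-exponentially, so for interior points of the closed disk I may take $z_s = z^{*}$ eventually, and for points on the boundary of $\bar{\mathcal{D}}(z_0, r_0)$ I would first prove membership in $\sigma(\mathcal{E}(x))$ for nearby interior points and pass to the limit using closedness of the spectrum. Applying Theorem \ref{bulk-theorem} at each stage yields $\phi_s \in \mathcal{I}_s$, integers $N_s', N_s''$ with $|N_s' - N_s|, |N_s'' - N_s| < N_s^{1/2}$, and a phase $x_s := x_s(\phi_s, z_s) \in \mathbb{T}^d$. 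By condition (A), $z_s$ is an eigenvalue of $\mathcal{E}^{\beta,\eta}_{[-N_s', N_s'']}(x_s)$ with normalized eigenvector $u_s := u^{[-N_s', N_s'']}(x_s, \cdot)$, and by condition (B) this eigenvector has the exponential tail bound $|u_s(n)| \leq \exp(-\gamma|n|/10)$ for $|n| \geq N_s/4$.

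Next, I zero-extend each $u_s$ to a unit vector $\psi_s \in \ell^2(\mathbb{Z})$. The finite operator $\mathcal{E}^{\beta,\eta}_{[-N_s',N_s'']}(x_s)$ differs from the truncation of $\mathcal{E}(x_s)$ only through the modified Verblunsky coefficients at the two boundary indices, so the discrepancy $(\mathcal{E}(x_s) - \mathcal{E}^{\beta,\eta}_{[-N_s',N_s'']}(x_s))\psi_s$ is supported on $O(1)$ indices near $\pm N_s$ and there $\psi_s$ is bounded by $\exp(-\gamma N_s/15)$ by (B). A direct estimate therefore gives
\begin{equation*}
\|(\mathcal{E}(x_s) - z_s)\psi_s\|_{\ell^2(\mathbb{Z})} \leq C \exp(-\gamma N_s/15).
\end{equation*}
Estimate \eqref{bulk-theorem-(1)} shows that $\{x_s\}$ is Cauchy with limit $x_\infty \in \mathbb{T}^d$; since $x \mapsto \mathcal{E}(x)$ is norm-continuous and $z_s \to z^{*}$, this yields $\|(\mathcal{E}(x_\infty) - z^{*})\psi_s\| \to 0$, so $\{\psi_s\}$ is a Weyl sequence and $z^{*} \in \sigma(\mathcal{E}(x_\infty))$.

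Finally, because $\omega$ satisfies the standard Diophantine condition, the shift $T x = x + \omega$ is minimal on $\mathbb{T}^d$, and the norm-continuity of $x \mapsto \mathcal{E}(x)$ combined with Hausdorff-continuity of the spectrum of normal operators under norm convergence implies that $\sigma(\mathcal{E}(x))$ is independent of $x$; thus $z^{*} \in \sigma(\mathcal{E}(x))$ for every $x \in \mathbb{T}^d$, and since $z^{*}$ was arbitrary the proposition follows. The main obstacle I anticipate is ensuring that the iterative selection of $z_s$ stays in the domain $\mathcal{D}(z_{s-1}, r_{s-1})$ for every scale (i.e. verifying that the recursion from Theorem \ref{bulk-theorem} is actually launched at every $z^{*}$ in the closed arc, including the extremal ones), together with a clean bookkeeping of the boundary-layer error so that the exponential decay from (B) strictly dominates the discrepancy between the finite and infinite operators in the Weyl estimate.
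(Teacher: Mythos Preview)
Your approach is correct and essentially the same as the paper's: iterate Theorem \ref{bulk-theorem} with $z_s=z$ fixed (for $z$ in the open arc), use the decay from condition (B) to bound $\|(\mathcal{E}(x_s)-z)\psi_s\|$, pass to a limit phase, and finish via the $x$-independence and closedness of the spectrum. The one place to tighten is your choice of $\phi_s$: estimate \eqref{bulk-theorem-(1)} compares $x_s$ and $x_{s-1}$ at the \emph{same} $(\phi,z)\in\Pi_s$, so if you plug in the varying centers $\phi_s$ of $\mathcal{I}_s$ you still owe a Lipschitz bound on $x_{s-1}(\cdot,z)$ to control $|x_{s-1}(\phi_s,z)-x_{s-1}(\phi_{s-1},z)|$; the paper avoids this by picking a single $\hat\phi\in\bigcap_s\mathcal{I}_s$ (which exists since $\mathcal{I}_s\Subset\mathcal{I}_{s-1}$) and evaluating all $x_s$ there, making $\{x_s(\hat\phi,z)\}$ Cauchy immediately.
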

\begin{proof}
Let $z\in \mathcal{D}(z_{0},r_{0})\cap\partial \mathbb{D}$ be arbitrary. By iteratively applying Theorem \ref{bulk-theorem} with $z_{s}=z$ for  $s\geq 1$, and noting that $\mathcal{I}_{s}\Subset\mathcal{I}_{s-1}$, we obtain a sequence of nested intervals. Consequently, there exists $\hat{\phi}\in \cap_{s}\mathcal{I}_{s}$. By \eqref{bulk-theorem-(1)}, there exists $x(z)$ such that
\begin{equation*}
|x(z)-x_{s}(\hat{\phi},z)|<2\exp(-\gamma N_{s}/50),\quad s\geq 0.
\end{equation*}
In addition, using \eqref{bulk-theorem-(2)}, there exists a normalized vector $u(z,\cdot)$ such that
\begin{equation*}
\|u(z,\cdot)-u^{[-N_{s}',N_{s}'']}((\hat{\phi},z),\cdot)\|<2\exp(-\gamma N_{s}/500),\quad s\geq 0.
\end{equation*}
Furthermore,
\begin{align*}
&\|(\mathcal{E}(x_{s}(\hat{\phi},z))-z)u^{[-N_{s}',N_{s}'']}(x_{s}(\hat{\phi},z),\cdot)\|\\
&\leq\|(\mathcal{E}(x_{s}(\hat{\phi},z))-\mathcal{E}(x(z)))u^{[-N_{s}',N_{s}'']}(x_{s}(\hat{\phi},z),\cdot)\| +\|(\mathcal{E}(x(z))-z)u^{[-N_{s}',N_{s}'']}(x_{s}(\hat{\phi},z),\cdot)\|\\
&\leq 8|x(z)-x_{s}(\hat{\phi},z)|+\|(\mathcal{E}(x(z))-z)u^{[-N_{s}',N_{s}'']}(x_{s}(\hat{\phi},z),\cdot)\|\\
&\lesssim \exp(-\gamma N_{s}/60).
\end{align*}
Then one can obtain that
\begin{equation*}
\|(\mathcal{E}(x(z))-z)u(z,\cdot)\|\lesssim \exp(-\gamma N_{s}/70),\quad s\geq 0.
\end{equation*}
It follows that $\mathcal{E}(x(z))u(z,\cdot)=zu(z,\cdot)$, i.e., $z\in \sigma(\mathcal{E}(x(z)))$. Then the statement of the theorem follows from the arbitrariness of $z$ and the closeness of $\sigma(\mathcal{E}(x))$.
\end{proof}

\begin{appendices}

\section{Appendix}
In this section, some useful definitions and lemmas are given.

\subsection{Some useful notations}
\begin{definition 3}\cite[Definition 2.3]{BGS02-Acta}
For any positive numbers $a, b$ the notation $a\lesssim b$ means $Ca\leq b$ for some constant $C>0$. By $a\ll b$ we mean that the constant $C$ is very large. If both $a\lesssim b$ and $a\gtrsim b$, then we write $a\asymp b$.
\end{definition 3}
\subsection{Avalanche principle}
\begin{lemma}\label{avalanche-principle}\cite[Proposition 2.2]{GS01-Annals}
Let $A_{1},\ldots,A_{m}$ be a sequence of arbitrary unimodular $2\times 2$-matrices. Suppose that
\begin{equation}\label{AP-1}
\min_{1\leq j\leq m}\|A_{j}\|\geq \mu\geq m
\end{equation}
and
\begin{equation}\label{AP-2}
\max_{1\leq j< m}\big[\log\|A_{j+1}\|+\log\|A_{j}\|-\log\|A_{j+1}A_{j}\|\big]<\frac{1}{2} \log\mu.
\end{equation}
Then
\begin{equation}\label{AP-3}
\Big|\log\|A_{m}\cdots A_{1}\|+\sum_{j=2}^{m-1}\log\|A_{j}\|-\sum_{j=1}^{m-1}\log\|A_{j+1}A_{j}\|\Big|<C_{A}\frac{m}{\mu},
\end{equation}
where $C_{A}$ is an absolute constant.
\end{lemma}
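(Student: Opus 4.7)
The plan is to exploit the SVD of each $2\times 2$ unimodular $A_j$ and then run an inductive ``tracking'' argument on the partial products $B_k := A_k A_{k-1}\cdots A_1$, keeping careful control of the top singular direction at every stage. Concretely, for each $j$ write
\begin{equation*}
A_j \;=\; s_j\, u_j v_j^{*} \;+\; s_j^{-1}\, u_j^{\perp}(v_j^{\perp})^{*},\qquad s_j := \|A_j\|,
\end{equation*}
where $u_j,v_j$ are unit vectors in $\mathbb{C}^{2}$ (the output/input top singular directions) and $u_j^{\perp},v_j^{\perp}$ are unit orthogonal complements. Unimodularity forces the second singular value to be $s_j^{-1}$, and hypothesis \eqref{AP-1} gives $s_j\ge\mu$, so $s_j^{-1}\le\mu^{-1}$ for every $j$.

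My first step is to translate the pair hypothesis \eqref{AP-2} into a statement about the overlaps $c_j := |\langle v_{j+1},u_j\rangle|$ between consecutive singular directions. Multiplying out the SVDs and using $s_j,s_{j+1}\ge \mu$, a direct computation gives
\begin{equation*}
\|A_{j+1}A_j\| \;=\; s_{j+1}s_j c_j\bigl(1 + O(\mu^{-2}c_j^{-2})\bigr).
\end{equation*}
Plugging into \eqref{AP-2} yields $c_j\ge \mu^{-1/2}$, so the relative error above is $O(\mu^{-1})$, and taking logarithms,
\begin{equation*}
\log c_j \;=\; \log\|A_{j+1}A_j\| - \log s_{j+1} - \log s_j + O(\mu^{-1}).
\end{equation*}

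Second, I will prove by induction on $k$ the representation
\begin{equation*}
B_k \;=\; \sigma_k\, u_k v_1^{*} \;+\; R_k,\qquad \sigma_k := \Big(\prod_{j=1}^{k} s_j\Big)\prod_{j=1}^{k-1}\langle v_{j+1},u_j\rangle,
\end{equation*}
with the remainder bounded by $\|R_k\|\le C(k/\mu)\,|\sigma_k|$ (equivalently $\|B_k\| = |\sigma_k|(1+O(k/\mu))$, and the top left singular direction of $B_k$ lies within $O(k/\mu)$ of $u_k$). The base $k=1$ is the SVD of $A_1$. For the inductive step I write $B_{k+1} = A_{k+1}B_k$, expand both factors in their SVD, and collect terms: the principal contribution is $s_{k+1}\langle v_{k+1},u_k\rangle\, u_{k+1}v_1^{*}\cdot\sigma_k$, which produces $\sigma_{k+1}u_{k+1}v_1^{*}$; the remaining cross terms are each smaller than $|\sigma_{k+1}|$ by a factor $\mu^{-1}c_k^{-1}\le \mu^{-1/2}$ (thanks to the hypothesis of the previous step and to the inductive control of the top direction of $B_k$, which is what keeps $\langle v_{k+1},u^{B_k}\rangle$ within $O(k/\mu)$ of the honest overlap $\langle v_{k+1},u_k\rangle$). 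Summing the per-step errors gives the claimed $C(k+1)/\mu$ bound, which uses $m\le\mu$ in \eqref{AP-1} to keep it $\ll 1$ uniformly in $k\le m$.

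Taking logarithms in the $k=m$ representation and substituting the formula for $\log c_j$ from the first step produces a telescoping sum:
\begin{equation*}
\log\|B_m\|
\;=\; \sum_{j=1}^{m}\log s_j + \sum_{j=1}^{m-1}\log c_j + O(m/\mu)
\;=\; -\sum_{j=2}^{m-1}\log\|A_j\| + \sum_{j=1}^{m-1}\log\|A_{j+1}A_j\| + O(m/\mu),
\end{equation*}
which is exactly \eqref{AP-3}. The main obstacle is the inductive step: one must verify that the top singular direction of $B_k$, after being perturbed by all the previous $O(1/\mu)$ errors, still has overlap with $v_{k+1}$ comparable to the true $c_k$; this is where the uniform lower bound $c_k\ge\mu^{-1/2}$ is essential, because it guarantees that the principal term beats the secondary contributions by a factor $\mu^{1/2}$ and hence the per-step error is additive of size $O(1/\mu)$ rather than multiplicative. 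The constant $C_A$ then arises purely from the geometric series of these one-step errors.
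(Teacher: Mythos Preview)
The paper does not actually prove this lemma: it is quoted verbatim from Goldstein--Schlag \cite[Proposition~2.2]{GS01-Annals} and no argument is supplied. Your SVD-tracking approach is indeed the standard one used there, and your first step (extracting $c_j\ge\mu^{-1/2}$ and $\log c_j=\log\|A_{j+1}A_j\|-\log s_{j+1}-\log s_j+O(\mu^{-1})$ from \eqref{AP-2}) is correct.

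However, your inductive step has a real gap. You assert $\|R_k\|\le C(k/\mu)|\sigma_k|$ and that the top direction of $B_k$ lies within $O(k/\mu)$ of $u_k$, and from this you claim the per-step error is $O(1/\mu)$. But you yourself write that the cross terms are smaller than $|\sigma_{k+1}|$ only by a factor $\mu^{-1}c_k^{-1}\le\mu^{-1/2}$: applying $A_{k+1}$ inflates $R_k$ by $s_{k+1}$, while $|\sigma_{k+1}|/|\sigma_k|=s_{k+1}c_k$, so the inherited remainder is of relative size $c_k^{-1}\|R_k\|/|\sigma_k|\le Ck\mu^{-1/2}$, not $C(k+1)\mu^{-1}$. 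Likewise, a direction error of $O(k/\mu)$ perturbs $\langle v_{k+1},u^{B_k}\rangle$ by $O(k/\mu)$, which relative to $c_k$ is $O(k\mu^{-1/2})$; summing gives $O(m^2\mu^{-1/2})$, far too large. The missing observation is that the direction error is \emph{non-cumulative}: regardless of how $u^{B_k}$ has drifted (as long as $|\langle v_{k+1},u^{B_k}\rangle|\gtrsim c_k$), after applying $A_{k+1}$ the new top direction $u^{B_{k+1}}$ is forced to lie within $O(s_{k+1}^{-2}c_k^{-1})\le O(\mu^{-3/2})$ of $u_{k+1}$, because the $u_{k+1}^\perp$-component of $A_{k+1}u^{B_k}$ has size at most $s_{k+1}^{-1}$ against a main term of size $\gtrsim s_{k+1}c_k$. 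With this uniform $O(\mu^{-3/2})$ direction bound, the per-step relative error in the overlap is $O(\mu^{-3/2}/c_k)\le O(\mu^{-1})$, and now the telescoping sum genuinely yields \eqref{AP-3}. So your induction should track the (non-cumulative) bound $|u^{B_k}-u_k|=O(\mu^{-3/2})$ rather than the cumulative $O(k/\mu)$ you stated.
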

\subsection{Properties of special matrices}
\begin{lemma}\label{eigenvector}\cite[Corollary 9.2]{Zhang-Piao}
Let $A$ be an $N\times N$ unitary matrix. Let $z\in \partial \mathbb{D}$, $\tilde{\varepsilon}>0\in \mathbb{R}$, and suppose there exists $\phi\in \mathbb{C}^{N}$, $\|\phi\|=1$, such that
$$\|(A-z)\phi\|<\tilde{\varepsilon}.$$
Then the following statements hold.\\
$\mathrm{(a)}$ There exists a normalized eigenvector $\psi$ of $A$ with an eigenvalue $z_{0}$ such that
$$z_{0}\in \mathcal{D}(z,\tilde{\varepsilon}\sqrt{2})\cap \partial \mathbb{D},$$
$$|\langle\phi,\psi\rangle|\geq(2N)^{-1/2}.$$
$\mathrm{(b)}$ If in addition there exists $\hat{\varepsilon}>\tilde{\varepsilon}$ such that the subspace of the eigenvectors of $A$ with eigenvalues falling into the interval $\mathcal{D}(z,\hat{\varepsilon})\cap \partial \mathbb{D}$ is at most of dimension one, then there exists a normalized eigenvector $\psi$ of $A$ with an eigenvalue $z_{0}\in \mathcal{D}(z,\tilde{\varepsilon})\cap \partial \mathbb{D}$, such that
$$\|\phi-\psi\|<\sqrt{2}\hat{\varepsilon}^{-1}\tilde{\varepsilon}.$$
\end{lemma}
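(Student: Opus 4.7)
The argument is entirely spectral-decomposition based, exploiting that a unitary $A$ admits an orthonormal eigenbasis $\{\psi_j\}_{j=1}^{N}$ with eigenvalues $\{z_j\}\subset\partial\mathbb D$. I would begin by expanding $\phi=\sum_j c_j\psi_j$ with $\sum_j|c_j|^2=1$ and observing the Plancherel-type identity
\begin{equation*}
\|(A-z)\phi\|^{2}=\sum_{j=1}^{N}|c_j|^{2}|z_j-z|^{2}<\tilde\varepsilon^{2}.
\end{equation*}
All the work in both parts is then a weighted-mass argument on the two sides of an annulus around $z$.

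For part (a), set $S=\{j:|z_j-z|\ge \tilde\varepsilon\sqrt 2\}$. Since $|z_j-z|^2\ge 2\tilde\varepsilon^2$ on $S$, the displayed inequality forces $\sum_{j\in S}|c_j|^2<1/2$, hence $\sum_{j\notin S}|c_j|^2>1/2$. Because $|S^c|\le N$, pigeonhole gives some $j_0\notin S$ with $|c_{j_0}|^2\ge 1/(2N)$; then $\psi\defeq\psi_{j_0}$ is a normalized eigenvector with eigenvalue $z_{j_0}\in\CD(z,\tilde\varepsilon\sqrt 2)\cap\partial\mathbb D$ and $|\langle\phi,\psi\rangle|=|c_{j_0}|\ge(2N)^{-1/2}$.

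For part (b), the hypothesis says at most one index $j_0$ satisfies $z_{j_0}\in\CD(z,\hat\varepsilon)\cap\partial\mathbb D$. First I would show this index actually exists: if every $|z_j-z|\ge\hat\varepsilon>\tilde\varepsilon$, the Plancherel identity already yields $\|(A-z)\phi\|\ge\hat\varepsilon>\tilde\varepsilon$, a contradiction. So fix the unique such $j_0$ and, using $|z_j-z|\ge\hat\varepsilon$ for all $j\ne j_0$, extract
\begin{equation*}
\hat\varepsilon^{2}\sum_{j\ne j_0}|c_j|^{2}\le \|(A-z)\phi\|^{2}<\tilde\varepsilon^{2},
\qquad\text{hence}\qquad |c_{j_0}|^{2}>1-\tilde\varepsilon^{2}/\hat\varepsilon^{2}.
\end{equation*}
The single-term bound $|c_{j_0}|^{2}|z_{j_0}-z|^{2}\le\|(A-z)\phi\|^2<\tilde\varepsilon^2$ combined with the fact that some eigenvalue must lie in $\CD(z,\tilde\varepsilon)$ (otherwise the same contradiction as before with $\hat\varepsilon$ replaced by $\tilde\varepsilon$) pins $z_{j_0}\in\CD(z,\tilde\varepsilon)\cap\partial\mathbb D$ by uniqueness. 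Finally set $\psi\defeq(c_{j_0}/|c_{j_0}|)\,\psi_{j_0}$; this is again a normalized eigenvector, and a direct computation gives
\begin{equation*}
\|\phi-\psi\|^{2}=2\bigl(1-|c_{j_0}|\bigr)\le 2\bigl(1-|c_{j_0}|^{2}\bigr)<2\tilde\varepsilon^{2}/\hat\varepsilon^{2},
\end{equation*}
which is the asserted bound $\|\phi-\psi\|<\sqrt 2\,\hat\varepsilon^{-1}\tilde\varepsilon$.

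There is no genuine obstacle here; the only place needing care is the phase choice in part (b) (one must rotate $\psi_{j_0}$ by $c_{j_0}/|c_{j_0}|$ so that $\Re\langle\phi,\psi\rangle=|c_{j_0}|$, turning $\|\phi-\psi\|^2$ into $2(1-|c_{j_0}|)$ rather than something larger) and the bookkeeping for the factor $\sqrt 2$ in (a), which comes from splitting $|c_j|^2$-mass at the threshold $1/2$ against the quadratic weight $|z_j-z|^2$.
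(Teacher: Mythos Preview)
Your proof is correct. The paper does not actually prove this lemma; it is stated in the Appendix with a citation to \cite[Corollary~9.2]{Zhang-Piao} and no argument is given. Your spectral-decomposition approach is the standard one for such statements and is complete: the Plancherel identity for $\|(A-z)\phi\|^2$, the mass-splitting at threshold $\tilde\varepsilon\sqrt{2}$ for part~(a), and the phase adjustment $\psi=(c_{j_0}/|c_{j_0}|)\psi_{j_0}$ in part~(b) are all handled correctly, including the observation that the one-dimensionality hypothesis forces a \emph{unique} eigenvalue in $\mathcal{D}(z,\hat\varepsilon)\cap\partial\mathbb{D}$ (distinct eigenvalues of a unitary have orthogonal eigenvectors).
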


\subsection{Weierstrass' preparation theorem}
Consider an analytic function $f(z,\omega_{1},\ldots,\omega_{d})$ defined in a polydisk
\begin{equation*}
\mathcal{P}_{*}=\mathcal{D}(z_{0},R_{0})\times \prod_{j=1}^{d}\mathcal{D}(\omega_{j,0},R_{0}),\,\,z_{0},\,\omega_{j,0}\in \mathbb{C}, \,\, R_{0}>0.
\end{equation*}
\begin{lemma}\cite[Lemma 2.28]{GSV16-arXiv}\label{Weierstrass}
Assume that $f(\cdot,\omega_{1},\ldots,\omega_{d})$ has no zeros on some circle
\begin{equation*}
\{z:|z-z_{0}|=r\},\,\, 0<r<R_{0}/2,
\end{equation*}
for any $\underline{\omega}=(\omega_{1},\ldots,\omega_{d})\in \mathcal{P}=\mathop{\prod}\limits_{j=1}^{d}\mathcal{D}(\omega_{j,0},r_{j,0})$ where $0<r_{j,0}<R_{0}$. Then there exist a polynomial $P(z,\underline{\omega})=z^{k}+a_{k-1}(\underline{\omega})z^{k-1}+\cdots+a_{0}(\underline{\omega})$ with $a_{j}(\underline{\omega})$ analytic in $\mathcal{P}$ and an analytic function $g(z,\underline{\omega})$, $(z,\underline{\omega})\in\mathcal{D}(z_{0},r)\times\mathcal{P}$ so that the following properties hold:\\
$\mathrm{(a)}$ $f(z,\underline{\omega})=P(z,\underline{\omega})g(z,\underline{\omega})$ for any $(z,\underline{\omega})\in \mathcal{D}(z_{0},r)\times\mathcal{P}$,\\
$\mathrm{(b)}$ $g(z,\underline{\omega})\neq 0 $ for any $(z,\underline{\omega})\in \mathcal{D}(z_{0},r)\times\mathcal{P}$,\\
$\mathrm{(c)}$ for any $\underline{\omega}\in\mathcal{P}$, $P(\cdot,\underline{\omega})$ has no zeros in $\mathbb{C}\backslash\mathcal{D}(z_{0},r)$.
\end{lemma}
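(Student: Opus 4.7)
The plan is to follow the classical proof of Weierstrass preparation by tracking the zeros of $f(\cdot,\underline{\omega})$ inside $\mathcal{D}(z_0,r)$ as the parameter $\underline{\omega}$ varies, via the argument principle, and then taking $P$ to be the monic polynomial in $z$ whose roots are exactly those zeros.

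First, by the nonvanishing hypothesis and continuity of $f$ on the compact set $\{|z-z_0|=r\}\times\overline{\mathcal{P}}$ (after shrinking $\mathcal{P}$ slightly, which is harmless), there is a constant $c>0$ with $|f(z,\underline{\omega})|\geq c$ on that set. The integral
$$N(\underline{\omega})=\frac{1}{2\pi i}\oint_{|z-z_0|=r}\frac{\partial_z f(z,\underline{\omega})}{f(z,\underline{\omega})}\,dz$$
is then a continuous integer-valued function of $\underline{\omega}\in\mathcal{P}$, hence equals a constant $k\geq 0$ (the case $k=0$ is trivial with $P\equiv 1$, $g=f$, so assume $k\geq 1$). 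Let $\zeta_1(\underline{\omega}),\ldots,\zeta_k(\underline{\omega})$ denote those zeros repeated with multiplicity.

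Second, for each $n\geq 0$ the Newton power sum
$$s_n(\underline{\omega})=\sum_{j=1}^{k}\zeta_j(\underline{\omega})^n=\frac{1}{2\pi i}\oint_{|z-z_0|=r}z^n\frac{\partial_z f(z,\underline{\omega})}{f(z,\underline{\omega})}\,dz$$
is analytic in $\underline{\omega}\in\mathcal{P}$, since the integrand is jointly continuous and analytic in $\underline{\omega}$ on a neighborhood of the contour. Newton's identities express each elementary symmetric function $e_j(\underline{\omega})=e_j(\zeta_1(\underline{\omega}),\ldots,\zeta_k(\underline{\omega}))$ as a polynomial in $s_1,\ldots,s_j$, so every $e_j$ is analytic on $\mathcal{P}$. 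Define
$$P(z,\underline{\omega})=\prod_{j=1}^{k}(z-\zeta_j(\underline{\omega}))=z^k+\sum_{j=0}^{k-1}a_j(\underline{\omega})z^j,$$
with $a_{k-j}(\underline{\omega})=(-1)^j e_j(\underline{\omega})$ analytic in $\underline{\omega}$. Conclusion (c) is immediate because every root $\zeta_j(\underline{\omega})$ lies in $\mathcal{D}(z_0,r)$.

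Finally set $g(z,\underline{\omega})=f(z,\underline{\omega})/P(z,\underline{\omega})$. For each fixed $\underline{\omega}$, the numerator and denominator share the same zeros inside $\mathcal{D}(z_0,r)$ with matching multiplicities, so $g(\cdot,\underline{\omega})$ is holomorphic and nonvanishing there, which gives (a) and (b) pointwise in $\underline{\omega}$. To obtain joint analyticity on $\mathcal{D}(z_0,r)\times\mathcal{P}$, I use the representation
$$g(z,\underline{\omega})=\frac{1}{2\pi i}\oint_{|w-z_0|=r}\frac{f(w,\underline{\omega})}{P(w,\underline{\omega})}\cdot\frac{dw}{w-z},\qquad |z-z_0|<r,$$
whose integrand is jointly analytic in $(w,\underline{\omega})$ on a neighborhood of the contour, since $|P(w,\underline{\omega})|$ is bounded below there ($P$'s roots all lie strictly inside $\mathcal{D}(z_0,r)$, and by compactness the minimum of $|P|$ on the contour is attained and positive, depending continuously on $\underline{\omega}$). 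The main technical point is exactly this joint-analyticity step for $g$: naively $f/P$ looks ill-defined where both vanish, and merely cancelling zeros slicewise gives only separate analyticity. The contour-integral formula bypasses this, since Cauchy's formula applied to the holomorphic function $z\mapsto f(z,\underline{\omega})/P(z,\underline{\omega})$ on $\mathcal{D}(z_0,r)$ identifies it with this integral, which is manifestly analytic in $(z,\underline{\omega})$ jointly; this avoids any appeal to Hartogs.
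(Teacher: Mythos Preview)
The paper does not supply its own proof of this lemma; it is stated in the appendix with a citation to \cite[Lemma 2.28]{GSV16-arXiv} and used as a black box in Lemma~\ref{Lemma4.5}. Your argument is the standard proof of Weierstrass preparation with parameters: count the zeros by the argument principle, recover the elementary symmetric functions of the roots from the analytic power sums via Newton's identities, and then show $g=f/P$ is jointly analytic via the Cauchy integral over the fixed contour. This is correct and is precisely the approach one finds in the cited reference.

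One small remark: your parenthetical ``after shrinking $\mathcal{P}$ slightly'' is unnecessary. You do not need a uniform lower bound for $|f|$ on $\{|z-z_0|=r\}\times\overline{\mathcal{P}}$; it suffices that for each fixed $\underline{\omega}\in\mathcal{P}$ the integrand in $N(\underline{\omega})$, in $s_n(\underline{\omega})$, and in the Cauchy formula for $g$ is continuous on the contour and analytic in $\underline{\omega}$ in a neighborhood. Connectedness of $\mathcal{P}$ then forces $N$ to be constant, and local analyticity of the integrals in $\underline{\omega}$ patches to global analyticity. So the argument goes through on $\mathcal{P}$ as stated, without shrinking.
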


\subsection{Implicit function theorem}
\begin{lemma}\cite[Lemma 4.2]{GSV19-Inventiones}\label{implicit}
Let $f(z,\omega)$ be an analytic function defined on the polydisk
$$\mathcal{P}=\{(z,\omega)\in \mathbb{C}\times\mathbb{C}^{n}: |z|, |\omega|<\rho_{0}\}.$$
Let $M_{1}=\sup|\partial_{z}f|$, $M(2)=\mathop{\max}\limits_{|\alpha|=2}\sup|\partial^{\alpha}f|$. Assume that $f(0,0)=0$, $\mu_{0}:=|\partial_{z}f(0,0)|>0$. Let
$$\rho_{1}\leq\min(\rho_{0}/2,c(n)\mu_{0}M(2)^{-1}),\quad r_{i}=c(n)\rho_{1}\min(1,\mu_{0}/|\partial_{\omega_{i}}f(0,0)|),$$
with $c(n)$ a sufficiently small constant. Then for any $\omega$, $|\omega_{i}|<r_{i}$, the equation
$$f(z,\omega)=0$$
has a unique solution $|z(\omega)|<\rho_{1}$ which is an analytic function of $\omega$.
\end{lemma}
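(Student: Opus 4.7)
The plan is to fix an arbitrary $z \in \mathcal{D}(z_0, r_0) \cap \partial\mathbb{D}$ and iterate Theorem \ref{bulk-theorem} with the constant choice $z_s \equiv z$ for every $s \geq 1$, thereby producing a phase $x(z) \in \mathbb{T}^d$ and a normalized vector $u(z, \cdot) \in \ell^2(\mathbb{Z})$ that exactly solve $\mathcal{E}(x(z))\, u(z, \cdot) = z\, u(z, \cdot)$. This forces $z \in \sigma(\mathcal{E}(x(z)))$; since $z$ is arbitrary in the dense subset $\mathcal{D}(z_0, r_0) \cap \partial\mathbb{D}$ and the spectrum is closed, $\bar{\mathcal{D}}(z_0, r_0) \cap \partial\mathbb{D}$ is covered.

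First I would verify that the iteration is legitimate. With $z_s = z$ for $s \geq 1$, the inductive hypothesis $z_s \in \mathcal{D}(z_{s-1}, r_{s-1}) \cap \partial\mathbb{D}$ holds trivially for $s \geq 2$, while for $s=1$ it reduces to $z \in \mathcal{D}(z_0, r_0)$, which is given. Theorem \ref{bulk-theorem} then outputs nested neighborhoods $\mathcal{I}_s \Subset \mathcal{I}_{s-1}$; pick any $\hat{\phi} \in \bigcap_s \mathcal{I}_s$. The telescoping estimates \eqref{bulk-theorem-(1)} and \eqref{bulk-theorem-(2)} make $\{x_s(\hat{\phi}, z)\}_s$ and $\{u^{[-N_s', N_s'']}(x_s(\hat{\phi}, z), \cdot)\}_s$ Cauchy sequences with geometrically summable tails, so they converge (the latter in $\ell^2(\mathbb{Z})$ after extension by zero outside $[-N_s', N_s'']$) to some $x(z) \in \mathbb{T}^d$ and a unit vector $u(z, \cdot) \in \ell^2(\mathbb{Z})$.

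Next I would promote the approximate equation to an exact one on the whole line. Condition (A) makes $u^{[-N_s', N_s'']}(x_s(\hat{\phi}, z), \cdot)$ an \emph{exact} eigenvector of $\mathcal{E}_{[-N_s', N_s'']}^{\beta, \eta}(x_s(\hat{\phi}, z))$ with eigenvalue $z$. The difference $\mathcal{E}(x_s(\hat\phi,z)) - \mathcal{E}_{[-N_s', N_s'']}^{\beta, \eta}(x_s(\hat\phi,z))$ acts only on rows near $\pm N_s$, and by condition (B) one has $|u^{[-N_s', N_s'']}(n)| \leq \exp(-\gamma|n|/10)$ for $|n| \geq N_s/4$, so this boundary defect contributes at most $O(\exp(-\gamma N_s/10))$ in $\ell^2$. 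Combined with the Lipschitz bound
\begin{equation*}
\|(\mathcal{E}(x(z)) - \mathcal{E}(x_s(\hat\phi, z)))\, u^{[-N_s', N_s'']}\| \leq 8\,|x(z) - x_s(\hat{\phi}, z)| \lesssim \exp(-\gamma N_s/50),
\end{equation*}
this gives $\|(\mathcal{E}(x(z)) - z)\, u^{[-N_s', N_s'']}(x_s(\hat\phi, z), \cdot)\| \to 0$. Passing to the $\ell^2$-limit, using that $\mathcal{E}(x(z))$ is unitary (hence bounded), yields $\mathcal{E}(x(z))\, u(z, \cdot) = z\, u(z, \cdot)$ with $\|u(z, \cdot)\| = 1$.

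The only delicate step, essentially bookkeeping given the machinery in hand, is the crossover from the finite-volume unitary restriction $\mathcal{E}_{[-N_s', N_s'']}^{\beta, \eta}$ to the doubly-infinite $\mathcal{E}$: the modification of the Verblunsky coefficients by $\beta, \eta$ at the boundary is harmless only because condition (B) forces $u^{[-N_s', N_s'']}$ to be exponentially small precisely where those modifications live, so the rank-$O(1)$ boundary defect is quenched. Everything else is a standard nested-intersection plus diagonal limiting argument.
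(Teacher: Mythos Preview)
Your proposal does not address the stated lemma at all. The statement is a quantitative implicit function theorem for an analytic function $f(z,\omega)$ on a polydisk: given $f(0,0)=0$ and $|\partial_z f(0,0)|=\mu_0>0$, one must show that $f(z,\omega)=0$ has a unique analytic solution $z(\omega)$ on an explicitly sized polydisk. Your argument, by contrast, is a proof of Proposition~\ref{Prop5.1} (that $\bar{\mathcal{D}}(z_0,r_0)\cap\partial\mathbb{D}\subset\sigma(\mathcal{E}(x))$): you iterate Theorem~\ref{bulk-theorem}, extract a limiting phase and eigenvector, and conclude $z$ lies in the spectrum. None of this bears on the implicit function theorem.

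For the record, in the paper Lemma~\ref{implicit} is quoted from \cite[Lemma~4.2]{GSV19-Inventiones} and is not re-proved; a correct proof would proceed by a standard contraction-mapping argument on $z\mapsto z - f(z,\omega)/\partial_z f(0,0)$, using the second-derivative bound $M(2)$ to control the nonlinear remainder and the smallness of $c(n)$ to guarantee the contraction on the disk $|z|<\rho_1$. Your write-up, while a reasonable sketch of Proposition~\ref{Prop5.1}, should be relocated to that statement.
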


\end{appendices}

\vskip1cm

\noindent{$\mathbf{Acknowledgments}$}

This work was supported in part by the  NSFC (No. 11571327, 11971059).

\vskip1cm

\section*{References}

\end{document}